\theoremstyle{definition}
\newtheorem{thm}{Theorem}[section]
\newtheorem{defn}[thm]{Definition}
\newtheorem{lem}[thm]{Lemma}
\newtheorem{rem}[thm]{Remark}
\newtheorem*{commr}{\textcolor{red}{Comment}}
\newtheorem*{commb}{\textcolor{blue}{To do}}
\newcommand{\mbfb}{\mathbf b}
\newcommand{\mbfc}{\mathbf c}
\newcommand{\mbfe}{\mathbf e}
\newcommand{\mbfy}{\mathbf y}
\newcommand{\mbfA}{\mathbf A}
\newcommand{\mbfD}{\mathbf D}
\newcommand{\mbfM}{\mathbf M}
\newcommand{\mbfP}{\mathbf P}
\renewcommand{\O}{\mathcal O}
\newcommand{\R}{\mathbb R}
\newcommand{\abs}[1]{\lvert #1\rvert}
\DeclareMathOperator*{\eq}{=}
\title{On order conditions for modified Patankar-Runge-Kutta schemes}
\author[a]{Stefan Kopecz}
\author[a]{Andreas Meister}
\affil[a]{Institute of Mathematics, University of Kassel}
\date{\today}
\newmdenv[
  linecolor=blue,
  linewidth=2pt,
  topline=false,
  bottomline=false,
  skipabove=\topsep,
  skipbelow=\topsep,
  leftmargin=-10pt,
  rightmargin=-10pt,
  innertopmargin=0pt,
  innerbottommargin=0pt
]{siderules}
\DeclareFontFamily{U}{mathx}{\hyphenchar\font45}
\DeclareFontShape{U}{mathx}{m}{n}{<-> mathx10}{}
\DeclareSymbolFont{mathx}{U}{mathx}{m}{n}
\DeclareMathAccent{\widebar}{0}{mathx}{"73}
\newcommand{\PPDS}{\widehat P}
\newcommand{\DPDS}{\widehat D}
\newcommand{\pPDS}{\widehat p}
\newcommand{\dPDS}{\widehat d}
\begin{document}
\maketitle
\abstract{In \cite{BDM2003} the modified Patankar-Euler and modified Patankar-Runge-Kutta schemes were introduced to solve positive and conservative systems of ordinary differential equations. 
These modifications of the forward Euler scheme and Heun's method guarantee positivity and conservation irrespective of the chosen time step size.
In this paper we introduce a general definition of modified Patankar-Runge-Kutta schemes and derive necessary and sufficient conditions to obtain first and second order methods. We also introduce two novel families of second order modified Patankar-Runge-Kutta schemes.}
%%%%%%%%%%%%%%%%%%%%%%%%%%%%%%%%%%%%%%%%%%%%%%%%%%%%%%%%%%%%%%%%%%%%%%%%%%%%%%%%%%%%%%%%%%%%%%%%%%%%%%%%%%%%%%%%%
\section{Introduction}
We consider production-destruction systems (PDS) of the form
\begin{align}\label{eq:pds}
 \frac{d y_i}{dt}(t)=P_i(\mbfy(t))-D_i(\mbfy(t)),\quad i=1,\dots,N.
\end{align}
By $\mbfy=(y_1,\dotsc, y_N)^T$ we denote the vector of constituents, which depends on time $t$. Both, the production terms $P_i$ and the destruction terms $D_i$ are assumed to be non-negative, that is $P_i, D_i\geq 0$ for $i=1,\dots,N$.
Furthermore, the production and destruction terms can be written as
\begin{align}\label{eq:pijdij}
 P_i(\mbfy) = \sum_{j=1}^N p_{ij}(\mbfy),\quad D_i(\mbfy) = \sum_{j=1}^N d_{ij}(\mbfy),
\end{align}
where $d_{ij}(\mbfy)\geq 0$ is the rate at which the $i$th constituent transforms into the $j$th component, while $p_{ij}(\mbfy)\geq 0$ is the rate at which the $j$th constituent transforms into the $i$th component.

We are interested in PDS which are positive as well as fully conservative.
\begin{defn}
The PDS \eqref{eq:pds} is called \textit{positive}, if positive initial values,  $y_i(0)> 0$ for $i=1,\dots,N$, imply positive solutions, $y_i(t)>0$ for $i=1,\dots,N$, for all times $t>0$.
\end{defn}
\begin{defn}\label{defn:consode}
The PDS \eqref{eq:pds}, \eqref{eq:pijdij} is called \textit{conservative}, if for all $i,j=1,\dots,N$ and $\mbfy\geq 0$, we have
$
p_{ij}(\mbfy) = d_{ji}(\mbfy). 
$
The system is called \textit{fully conservative}, if in addition
$
p_{ii}(\mbfy)=d_{ii}(\mbfy)=0
$
holds for all $\mbfy\geq 0$ and $i=1,\dots,N$.
\end{defn}
In the following, we will assume that the PDS \eqref{eq:pds} is fully conservative.
Remark~\ref{rem:fullycons} shows that every conservative PDS can be rewritten as an equivalent fully conservative PDS. 
\begin{rem}\label{rem:fullycons}
If $p_{ii}=d_{ii}\ne0$ for some $i\in\{1,\dots,N\}$ in \eqref{eq:pijdij}, we can write 
\[
P_i(\mbfy)-D_i(\mbfy) = \sum_{\substack{j=1\\j\ne i }}^N\bigl( p_{ij}(\mbfy)-d_{ij}(\mbfy)\bigr) + \underbrace{p_{ii}(\mbfy)-d_{ii}(\mbfy)}_{=0} =  \sum_{\substack{j=1\\j\ne i }}^N \bigl(p_{ij}(\mbfy)-d_{ij}(\mbfy)\bigr).
\]
Setting $\widetilde p_{ij}=p_{ij}$, $\widetilde d_{ij}=d_{ij}$ for $i\ne j$ and $\widetilde p_{ii}=\widetilde d_{ii}=0$, results in
\[
P_i(\mbfy)-D_i(\mbfy)=\sum_{j=1}^N \left(\widetilde p_{ij}(\mbfy) - \widetilde d_{ij}(\mbfy)\right).
\]
Thus, we have found an equivalent fully conservative PDS.
\end{rem}
\begin{rem}
In case of a fully conservative PDS, \eqref{eq:pijdij} can be written as
\[
P_i(\mbfy) = \sum_{\substack{j=1\\j\ne i }}^N p_{ij}(\mbfy),\quad D_i(\mbfy) = \sum_{\substack{j=1\\j\ne i }}^N d_{ij}(\mbfy).
\]
But for the sake of a simple notation, we will always use the form \eqref{eq:pijdij}.
\end{rem}
Examples of positive and conservative PDS, which model academic as well as realistic applications, can be found in Section~\ref{sec:testcases}.

If a PDS is conservative the sum of its constituents $\sum_{i=1}^N y_i(t)$ remains constant in time, since we have
\[
\frac{d}{dt}\sum_{i=1}^N y_i=\sum_{i=1}^N \bigl(P_i(\mbfy)-D_i(\mbfy)\bigr) = \sum_{i,j=1}^N \bigl(p_{ij}(\mbfy)-d_{ij}(\mbfy)\bigr) = \sum_{i,j=1}^N \bigl(\underbrace{p_{ij}(\mbfy)-d_{ji}(\mbfy)}_{=0}\bigr) = 0.
\]
This motivates the definition of a conservative numerical scheme.
\begin{defn}
Let $\mbfy^n$ denote an approximation of $\mbfy(t^n)$ at time level $t^n$. 
 The one-step method
 \[
 \mbfy^{n+1} = \mbfy^n + \Delta t \Phi(t^n,\mbfy^n,\mbfy^{n+1},\Delta t)
 \]
 is called
 \begin{itemize}
 \item \textit{unconditionally conservative}, if 
 \[
 \sum_{i=1}^N \left(y_i^{n+1}-y_i^n\right)=0
 \]
 is satisfied for all $n\in\mathbb N$ and $\Delta t>0$.
 \item \textit{unconditionally positive}, if it guarantees $\mbfy^{n+1}>0$ for all $\Delta t>0$ and $\mbfy^n>0$.
 \end{itemize}

\end{defn}

The modified Patankar-Euler and modified Patankar Runge-Kutta scheme were introduced in \cite{BDM2003} to guarantee unconditional conservation and positivity of the numerical solution of a conservative and positive PDS.
Both schemes are members of the more general class of modified Patankar-Runge-Kutta (MPRK) schemes as defined in Definition~\ref{def:MPRKdefn} below.
The modified Patankar-Euler scheme reads
\begin{align*}
y_i^{n+1} = y_i^n + \Delta t\sum_{j=1}^N\left( p_{ij}(\mbfy^n)\frac{y_j^{n+1}}{y_j^n} - d_{ij}(\mbfy^n)\frac{y_i^{n+1}}{y_i^n}\right),\quad i=1,\dots,N,
\end{align*}
and is unconditionally positive, conservative and first order accurate.
It can be understood as a modification of the forward Euler method, in which the production and destruction terms are weighted in a way to ensure unconditional positivity and conservation of the numerical solution.
We see that the explicitness of the forward Euler scheme is lost and the solution of a linear system of size $N \times N$ is required to obtain the approximation at the next time level.
It is noteworthy that even when the PDS is nonlinear, only a linear system has to be solved.
The second order modified Patankar-Runge-Kutta scheme is given by
\begin{align*}
y_i^{(2)} &= y_i^n + \Delta t\sum_{j=1}^N\left( p_{ij}(\mbfy^n)\frac{y_j^{(2)}}{y_j^n} - d_{ij}(\mbfy^n)\frac{y_i^{(2)}}{y_i^n}\right),\\
y_i^{n+1} &= y_i^n + \frac{\Delta t}2 \sum_{j=1}^N\left( \left(p_{ij}(\mbfy^n)+p_{ij}(\mbfy^{(2)})\right)\frac{y_j^{n+1}}{y_j^{(2)}} - \left(d_{ij}(\mbfy^n)+d_{ij}(\mbfy^{(2)})\right)\frac{y_i^{n+1}}{y_i^{(2)}}\right),
\end{align*}
for $i=1,\dots,N$. This is an unconditionally positive and conservative modification of Heun's predictor corrector method,
which requires the solution of two linear systems of size $N\times N$ in each time step.

Both schemes have been successfully applied to solve physical, biogeochemical and ecosystem models (\cite{BDM2005,BBKMNU2006,BMZ2009,HenseBurchard2010,HenseBeckmann2010,MeisterBenz2010,WHK2013}).
They have also proven beneficial in cosmology \cite{KlarMuecket2010}. 
In \cite{SchippmannBurchard2011} it was demonstrated that the second order scheme of \cite{BDM2003} outperforms standard Runge-Kutta and Rosenbrock methods when solving biogeochemical models without multiple
source compounds per system reactions. 
The same was shown with respect to workload in \cite{BonaventuraDellaRocca2016}, where the Brusselator PDS was solved with different time integration schemes.

In \cite{BBKS2007,BRBM2008} second order schemes, which ensure conservation in a biochemical sense, were introduced. 
These schemes require the solution of a non-linear equation in each time step. 
Other schemes for the same purpose were recently presented in \cite{RadtkeBurchard2015}. 
These explicit schemes incorporate the MPRK schemes of \cite{BDM2003} to achieve multi-element conservation for stiff problems.
A potentially third order Patankar-type scheme was introduced in \cite{FormaggiaScotti2011}.
This scheme uses the MPRK scheme of \cite{BDM2003} as a predictor and applies a corrector which is based on a BDF method.

Modified Patankar-Runge-Kutta type schemes are also used in the context of partial differential equations. An implicit first order Patankar-type scheme based on a third order SDIRK method was presented in \cite{MeisterOrtleb2014} and applied to the shallow water equations.

In the present paper we will generalize the results of \cite{BDM2003} and introduce a more general class of unconditionally positive and conservative schemes based on explicit Runge-Kutta schemes.
In particular, we want to avoid the solution of non-linear equations and to keep the linear implicity of the methods of \cite{BDM2003}.
Furthermore, we are interested in conservation as defined in Definition~\ref{defn:consode}, biochemical conservation is not of interest in this paper.

% \begin{comm}
% Kojouharov uses another approach to achieve unconditional positivity for ODEs \cite{DimitrovKojouharov2006,DimitrovKojouharov2008,Wood2015,WKD2016} and PDEs \cite{Chen-CarpentierKojouharov2013}.
% \end{comm} 

Until now, a general introduction and investigation of modified Patankar-Runge-Kutta schemes is lacking.
This is the purpose of the present paper. 
In particular, we present necessary and sufficient conditions to obtain first and second order accurate schemes. These show that the Patankar-weights chosen in \cite{BDM2003} are not the only possible choices and are not applicable to general Runge-Kutta schemes.
 
The paper is organized as follows. In Section~\ref{sec:MPRK} a general definition of modified Patankar-Runge-Kutta (MPRK) schemes will be given. It will be shown that MPRK schemes are unconditionally positive and conservative by construction. Sections~\ref{sec:MPRKorder1} and \ref{sec:MPRKorder2} deal with the construction of MPRK schemes of first and second order. We present necessary and sufficient conditions to obtain a certain order along with novel MPRK schemes. Finally, the test problems of Section~\ref{sec:testcases} are used in Section~\ref{sec:numres} to compare the new MPRK schemes with the schemes introduced in \cite{BDM2003}.
%%%%%%%%%%%%%%%%%%%%%%%%%%%%%%%%%%%%%%%%%%%%%%%%%%%%%%%%%%%%%%%%%%%%%%%%%%%%%%%%%%%%%%%%%%%%%%%%%%%%%%%%%%%%%%%%%
\section{Modified Patankar-Runge-Kutta schemes}\label{sec:MPRK}
An explicit $s$-stage Runge-Kutta method for the solution of an ordinary differential equation
$y'(t) = f(t,y(t))$
is given by
\begin{equation*}
\begin{split}
 y^{(k)} &= y^n + \Delta t\sum_{\nu=1}^{k-1} a_{k\nu} f(t^n+c_\nu \Delta t,y^{(\nu)}),\quad k=1,\dots,s,\\
 y^{n+1}&=y^n+\Delta t \sum_{k=1}^s b_k 
 f(t^n+c_k\Delta t,y^{(k)}).
\end{split}
\end{equation*}
The method is characterized by its coefficients $a_{k\nu}$, $b_k$, $c_k$ for $k=1,\dots,s$, $\nu=1,\dots,k-1$ and can be represented 
by the Butcher tableau
\[
\begin{array}{c|c}
\mbfc        &    \mbfA\\\hline
          & \mbfb
\end{array},
\]
with $\mbfA = (a_{k\nu})_{k,\nu=1,\dots,s}$, $\mbfc=(c_1,\dots,c_s)^T$ and $\mbfb=(b_1,\dots,b_s)$.
Applied to \eqref{eq:pds} the method reads
\begin{subequations}
\begin{align}\label{rkscheme}
 y_i^{(k)} &= y_i^n + \Delta t\sum_{\nu=1}^{k-1} a_{k\nu} 
 \sum_{j=1}^N \left(p_{ij}(\mbfy^{(k)})-d_{ij}(\mbfy^{(k)})\right),\quad k=1,\dots,s,\\ \label{eq:rkschemenp1}
 y_i^{n+1}&=y_i^n+\Delta t \sum_{k=1}^s b_k 
 \sum_{j=1}^N\left( p_{ij}(\mbfy^{(k)})-d_{ij}(\mbfy^{(k)})\right).
\end{align}
\end{subequations}

The idea of the modified Patankar-Runge-Kutta schemes is to adapt explicit Runge-Kutta schemes in such a way that they become positive irrespective of the chosen time step size $\Delta t$, while still maintaining their inherent property to be conservative. 
One approach to achieve unconditional positivity is the so-called Patankar-trick introduced in \cite{Patankar1980} as \textit{source term linearization} in the context of turbulent flow. If we modify \eqref{eq:rkschemenp1} and add a weighting of the destruction terms like
\begin{equation*}
y_i^{n+1} = y_i^n + \Delta t\sum_{k=1}^{s} b_{k} 
 \sum_{j=1}^N\biggl( p_{ij}(\mbfy^{(k)})-d_{ij}(\mbfy^{(k)})\frac{y_i^{n+1}}{\sigma_i}\biggr),
\end{equation*} 
we obtain
\begin{equation*}
y_i^{n+1} = \frac{y_i^n + \Delta t\sum_{k=1}^s b_k 
 \sum_{j=1}^N p_{ij}(\mbfy^{(k)})}{1 + \Delta t\sum_{k=1}^s b_k 
 \sum_{j=1}^N d_{ij}(\mbfy^{(k)})/\sigma_i}.
\end{equation*} 
Thus, if $y_i^n$, the weights $b_k$ for $k=1,\dots,s$ and $\sigma_i$ are positive, so is $y_i^{n+1}$. 
The crucial idea of the Patankar-trick is to multiply the destruction terms with weights that comprise $y_i^{n+1}$ as a factor themselves.

Weighting only the destruction terms will result in a non-conservative scheme. So the production terms have to be weighted accordingly as well. Since we have $d_{ij}(\mbfy)=p_{ji}(\mbfy)$, the proper weight for $p_{ij}(\mbfy^{(k)})$ is $y_j^{n+1}/\sigma_j$.

The above ideas lead to the following definition.
\begin{defn}\label{def:MPRKdefn}
Given a non-negative Runge-Kutta matrix $\mbfA=(a_{ij})_{i,j=1,\dots,s}$, non-negative weights $b_1,\dotsc,b_s$ and $\delta\in\{0,1\}$, the scheme
\begin{subequations}\label{eq:MPRK}
\begin{align}\label{eq:MPRKstages}
y_i^{(k)} &= y_i^n + \Delta t\sum_{\nu=1}^{k-1} a_{k\nu} 
 \sum_{j=1}^N \biggl(p_{ij}(\mbfy^{(\nu)})(1-\delta)+p_{ij}(\mbfy^{(\nu)})\frac{y_j^{(k)}}{\pi^{(k)}_j}\delta-d_{ij}(\mbfy^{(\nu)})\frac{y_i^{(k)}}{\pi^{(k)}_i}\biggr),\quad k=1,\dots,s,\\\label{eq:MPRKapprox}
 y_i^{n+1}&=y_i^n+\Delta t \sum_{k=1}^s b_k 
 \sum_{j=1}^N\biggl( p_{ij}(\mbfy^{(k)})\frac{y_j^{n+1}}{\sigma_j}-d_{ij}(\mbfy^{(k)})\frac{y_i^{n+1}}{\sigma_i}\biggr),
\end{align}
\end{subequations}
for $i=1\dots,N$, is called \textit{modified Patankar-Runge-Kutta scheme} (MPRK) if
\begin{enumerate}
 \item $\pi_i^{(k)}$ and $\sigma_i$ are unconditionally positive for $k=1,\dots, s$ and $i=1,\dots,N$, 
 \item $\pi_i^{(k)}$ is independent of $y_i^{(k)}$ and $\sigma_i$ is independent of $y_i^{n+1}$ for $k=1,\dots, s$ and $i=1,\dots,N$.
\end{enumerate}
The weights $1/\sigma_i$ and $1/\pi_i^{(k)}$ are called Patankar-weights and the denominators $\sigma_i$ and $\pi_i^{(k)}$ are called Patankar-weight denominators (PWD).
\end{defn}
The following remarks  comment on the free parameters in the definition of MPRK schemes.
\begin{rem}
The parameter $\delta\in\{0,1\}$ in \eqref{eq:MPRKstages} controls the conservation of the stage values.
These are conservative if $\delta=1$, otherwise they are not, since production and destruction terms are weighted differently. See Lemma~\ref{lem:MPRKcons} below.
% We refer to the scheme \eqref{eq:MPRK} as MPRK scheme irrespectively of the value of $\delta$. In general we assume $\delta =1$ and comment on the value of $\delta$ explicitly when necessary.
\end{rem}
\begin{rem}
We require $\sigma_i$ to be independent of $y_i^{n+1}$ to ensure the scheme's positivity and linear implicity. If we choose $\sigma_i=y_i^{n+1}$, we end up with the original Runge-Kutta scheme, which is not unconditionally positive. If $\sigma_i$ is a non-linear function of $y_i^{n+1}$ we would have to solve a non-linear system instead of a linear one to compute $y_i^{n+1}$. For the same reason we require $\pi_i^{(k)}$ to be independent of $y_i^{(k)}$.
\end{rem}
\begin{rem}
One might get the impression that $\sigma_i$ and $\pi_i^{(k)}$ remain constant during the time integration. As we will see, they
 are chosen as functions of stage values in all the following schemes. Thus, they will change from time step to time step. But for the sake of simplicity this will not be reflected in the notation.
\end{rem}
\begin{rem}\label{rem:negweights}
 Definition~\ref{def:MPRKdefn} is formulated for non-negative Runge-Kutta parameters.
 But MPRK schemes with negative Runge-Kutta parameters can be devised as well.
 In this case, the weighting of the production and destruction terms which get multiplied by the negative weight must be interchanged.
 This procedure will ensure the unconditional positivity of the scheme, but may have an impact on the necessary requirements to obtain a certain order of accuracy. To avoid multiple case distinctions we demand for positive Runge-Kutta parameters.
\end{rem}

Due to the introduction of the Patankar-weights, $s$ linear systems of size $N\times N$ need to be solved to obtain the stage values and the approximation at the next time level. 
In consideration of $p_{ii}=d_{ii}=0$ for $i=1,\dots,N$, the scheme \eqref{eq:MPRK} can be written in matrix-vector notation as
\begin{subequations}\label{eq:MPRKMVs}
\begin{align}\label{eq:MPRKMVstage}
 \mbfM^{(k)}\mbfy^{(k)}&=\mbfy^n+(1-\delta)\Delta t\mbfP(\mbfy^n),\quad k=1,\dots,s,\\\label{eq:MPRKMV}
 \mbfM\mbfy^{n+1}&=\mbfy^n,
\end{align}
\end{subequations}
with $\mbfP(\mbfy^n)=(P_1(\mbfy^n),\dots,P_N(\mbfy^n))^T$ and 
 \begin{equation}\label{eq:MPRKmatvecstage}
 \begin{split}
	m_{ii}^{(k)} &= 1 + \Delta t\sum_{\nu=1}^{k-1} a_{k\nu}
 \sum_{j=1}^N d_{ij}(\mbfy^{(\nu)})/\pi_i^{(k)} >0,\quad i=1,\dots, N,\\
  m_{ij}^{(k)} &=-\Delta t\delta\sum_{\nu=1}^{k-1} a_{k\nu}  p_{ij}(\mbfy^{(\nu)})/\pi_j^{(k)} \leq 0,\quad i,j=1,\dots,N,\,i\ne j,
  \end{split}
 \end{equation}
 for $k=1,\dots,s$ and
  \begin{equation}\label{MPRKmatvec}
   \begin{split}
  m_{ii} &= 1 + \Delta t\sum_{k=1}^s b_k 
 \sum_{j=1}^N d_{ij}(\mbfy^{(k)})/\sigma_i >0,\quad i=1,\dots,N,\\
  m_{ij} &=-\Delta t\sum_{k=1}^s b_k  p_{ij}(\mbfy^{(k)})/\sigma_j \leq 0,\quad i,j=1,\dots,N,\,i\ne j.
  \end{split}
 \end{equation}
If $\delta = 0$, the matrices $\mbfM^{(k)}$ become diagonal and the production terms appear on the right hand side of \eqref{eq:MPRKMVstage}.
 
The following two lemmas show that MPRK schemes as defined in Definition~\ref{def:MPRKdefn} are indeed unconditionally positive and conservative. 
Both lemmas are slight generalizations of lemmas from \cite{BDM2003}.
\begin{lem}\label{lem:MPRKcons}
 A MPRK scheme \eqref{eq:MPRK} applied to a conservative PDS is unconditionally conservative. 
 If $\delta=1$, the same holds for all stage values, this is
 $\sum_{i=1}^N (y_i^{(k)}-y_i^n)=0$
 for $k=1,\dots,s$.
\end{lem}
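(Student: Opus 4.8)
The plan is to sum each update equation over the component index $i$ and to show that the resulting double sum over production and destruction rates cancels termwise, using only the conservation relation $p_{ij}(\mbfy)=d_{ji}(\mbfy)$ from Definition~\ref{defn:consode}. First I would sum \eqref{eq:MPRKapprox} over $i=1,\dots,N$, which gives
\[
\sum_{i=1}^N\bigl(y_i^{n+1}-y_i^n\bigr)=\Delta t\sum_{k=1}^s b_k\sum_{i=1}^N\sum_{j=1}^N\biggl(p_{ij}(\mbfy^{(k)})\frac{y_j^{n+1}}{\sigma_j}-d_{ij}(\mbfy^{(k)})\frac{y_i^{n+1}}{\sigma_i}\biggr).
\]
Since $\Delta t$, the $b_k$ and the outer sum over $k$ are common factors, it suffices to show that the inner double sum over $i$ and $j$ vanishes for each fixed stage $k$.

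The key step is a relabeling of the summation indices in the destruction part. Swapping the names of the dummy indices $i$ and $j$ in $\sum_{i,j} d_{ij}(\mbfy^{(k)})y_i^{n+1}/\sigma_i$ turns it into $\sum_{i,j} d_{ji}(\mbfy^{(k)})y_j^{n+1}/\sigma_j$, and applying the conservation identity $d_{ji}=p_{ij}$ produces exactly $\sum_{i,j}p_{ij}(\mbfy^{(k)})y_j^{n+1}/\sigma_j$, which is the production part. Hence the two contributions cancel, the inner sum is zero, and $\sum_{i=1}^N(y_i^{n+1}-y_i^n)=0$ for every $\Delta t>0$. I would stress that this cancellation hinges on the specific placement of the Patankar-weights in \eqref{eq:MPRKapprox}: the production rate $p_{ij}$ carries the weight $1/\sigma_j$ indexed by $j$, while the destruction rate $d_{ij}$ carries $1/\sigma_i$, so that after the swap the weights align. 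This is precisely the point at which the design of the weights matters, and it is the only place where any care is needed.

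For the stage values with $\delta=1$ the argument is identical. Summing \eqref{eq:MPRKstages} over $i$ with $\delta=1$ removes the unweighted production term and leaves
\[
\sum_{i=1}^N\bigl(y_i^{(k)}-y_i^n\bigr)=\Delta t\sum_{\nu=1}^{k-1}a_{k\nu}\sum_{i=1}^N\sum_{j=1}^N\biggl(p_{ij}(\mbfy^{(\nu)})\frac{y_j^{(k)}}{\pi_j^{(k)}}-d_{ij}(\mbfy^{(\nu)})\frac{y_i^{(k)}}{\pi_i^{(k)}}\biggr),
\]
whose inner double sum vanishes by the same index swap and the identity $d_{ji}=p_{ij}$, now with $\pi^{(k)}_\bullet$ in place of $\sigma_\bullet$; this gives $\sum_{i=1}^N(y_i^{(k)}-y_i^n)=0$ for $k=1,\dots,s$. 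When $\delta=0$ the production term enters unweighted while the destruction term keeps its weight $1/\pi_i^{(k)}$, so the symmetry under the swap is broken and the cancellation no longer occurs, consistent with the hypothesis $\delta=1$ for stage conservation. Since the whole argument is purely algebraic and independent of $\Delta t$, positivity is never invoked and the conservation holds unconditionally; there is no genuine analytical obstacle beyond the bookkeeping of matching the weights under the relabeling.
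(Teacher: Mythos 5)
Your proof is correct and follows essentially the same route as the paper's: sum the update over $i$, then use the conservation identity $p_{ij}=d_{ji}$ together with a swap of the dummy indices to cancel the production and destruction double sums, with the identical argument (using $\pi_i^{(k)}$ in place of $\sigma_i$) handling the stages when $\delta=1$. The only cosmetic difference is the order of steps—the paper substitutes $p_{ij}=d_{ji}$ first and then reads off the cancellation from the index symmetry, while you relabel indices first—and your extra remarks on why the weight placement matters and why $\delta=0$ breaks stage conservation are accurate.
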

\begin{proof}
Since we consider a conservative PDS, we have $p_{ij}(\mbfy)=d_{ji}(\mbfy)$. Thus, we see
\begin{align*}
\sum_{i=1}^N (y_i^{n+1}-y_i^n) &= \Delta t\sum_{k=1}^s b_k\sum_{i,j=1}^N\biggl( p_{ij}(\mbfy^{(k)})\frac{y_j^{n+1}}{\sigma_j}-d_{ij}(\mbfy^{(k)})\frac{y_i^{n+1}}{\sigma_i}\biggr)\\
&= \Delta t\sum_{k=1}^s b_k\sum_{i,j=1}^N\biggl( d_{ji}(\mbfy^{(k)})\frac{y_j^{n+1}}{\sigma_j}-d_{ij}(\mbfy^{(k)})\frac{y_i^{n+1}}{\sigma_i}\biggr) = 0.
\end{align*}
The same argument can be used to show the conservation of the stages if $\delta=1$.
\end{proof}

\begin{lem}\label{lem:MPRKpos}
A MPRK scheme \eqref{eq:MPRK} is unconditionally positive. 
 The same holds for all the stages of the scheme, this is for all $\Delta t>0$ and $\mbfy^n>0$ we have $\mbfy^{(k)}>0$ for $k=1,\dots,s$.
\end{lem}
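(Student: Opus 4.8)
The plan is to work entirely from the matrix--vector form \eqref{eq:MPRKMVs} and to show that each coefficient matrix $\mbfM^{(k)}$ and $\mbfM$ is a nonsingular M-matrix, i.e.\ a matrix with entrywise non-negative inverse, applied to a strictly positive right-hand side. Because the entries in \eqref{eq:MPRKmatvecstage} are built from the earlier stage values $\mbfy^{(\nu)}$ with $\nu<k$, I would run the argument as an induction on the stage index $k$, and then treat $\mbfy^{n+1}$ by the same mechanism. For the base case $k=1$ the inner sum in \eqref{eq:MPRKstages} is empty, so $\mbfM^{(1)}=\mbfI$ and $\mbfy^{(1)}=\mbfy^n+(1-\delta)\Delta t\,\mbfP(\mbfy^n)$, which is strictly positive since $\mbfy^n>0$ and $\mbfP\geq0$. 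In the inductive step I assume $\mbfy^{(\nu)}>0$ for all $\nu<k$; then every $p_{ij}(\mbfy^{(\nu)}),d_{ij}(\mbfy^{(\nu)})\geq0$, and together with the unconditional positivity of the PWD from Definition~\ref{def:MPRKdefn}, the matrix $\mbfM^{(k)}$ has strictly positive diagonal entries $m_{ii}^{(k)}>0$ and non-positive off-diagonal entries $m_{ij}^{(k)}\leq0$.

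The key step is strict diagonal dominance by columns, which turns this sign pattern into the M-matrix property, and it is here that the conservation identity $p_{ij}=d_{ji}$ together with $p_{jj}=d_{jj}=0$ does the real work. Summing the $j$th column of $\mbfM$, the off-diagonal contribution $-\Delta t\sum_{k}b_k\sum_{i\ne j}p_{ij}(\mbfy^{(k)})/\sigma_j$ equals $-\Delta t\sum_{k}b_k D_j(\mbfy^{(k)})/\sigma_j$ by conservation, which cancels exactly against the destruction part of the diagonal entry $m_{jj}$ (note both carry the \emph{same} denominator $\sigma_j$), leaving column sum $1$. Hence $m_{jj}-\sum_{i\ne j}\abs{m_{ij}}=1>0$, so $\mbfM$ is strictly column diagonally dominant, hence a nonsingular M-matrix with $\mbfM^{-1}\geq0$. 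The identical computation with $a_{k\nu}$ and $\pi_j^{(k)}$ in place of $b_k$ and $\sigma_j$ gives column sums $1$ for $\mbfM^{(k)}$ when $\delta=1$. When $\delta=0$ the off-diagonal entries in \eqref{eq:MPRKmatvecstage} vanish, so $\mbfM^{(k)}$ is diagonal with strictly positive diagonal and the non-negative-inverse conclusion is immediate.

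From $(\mbfM^{(k)})^{-1}\geq0$ and $\mbfM^{-1}\geq0$ and the strict positivity of the right-hand sides $\mbfy^n+(1-\delta)\Delta t\,\mbfP(\mbfy^n)$ and $\mbfy^n$, one first gets non-negativity $\mbfy^{(k)}\geq0$, $\mbfy^{n+1}\geq0$. To upgrade to strict positivity I would bootstrap within each system: the $i$th equation reads $m_{ii}y_i=(\text{RHS})_i+\sum_{j\ne i}\abs{m_{ij}}y_j\geq(\text{RHS})_i>0$, and since $m_{ii}>0$ this forces $y_i>0$. This closes the induction, giving $\mbfy^{(k)}>0$ for $k=1,\dots,s$, and the same argument applied to \eqref{eq:MPRKMV} yields $\mbfy^{n+1}>0$, establishing unconditional positivity of \eqref{eq:MPRK}. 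I expect the main obstacle to be organizational rather than computational: one must set up the induction so that every matrix entry is assembled only from already-positive data, and handle the two values of $\delta$ (genuinely coupled versus purely diagonal stage systems) uniformly. The one genuinely load-bearing identity is the conservation relation $p_{ij}=d_{ji}$, without which the column sums would not collapse to $1$ and the diagonal-dominance argument would fail.
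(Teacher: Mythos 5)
Your proof is correct and follows essentially the same route as the paper's: both read off the sign pattern of $\mbfM$ and $\mbfM^{(k)}$ from \eqref{eq:MPRKmatvecstage}--\eqref{MPRKmatvec}, use the conservation identity $p_{ij}=d_{ji}$ (with $p_{ii}=d_{ii}=0$) to get strict column diagonal dominance (the paper phrases it as $\abs{m_{ii}}>\sum_{j\ne i}\abs{m_{ji}}$, you as column sums equal to $1$), conclude $\mbfM^{-1}\geq 0$, $(\mbfM^{(k)})^{-1}\geq 0$ via the M-matrix property, and handle $\delta=0$ by noting the stage matrices are diagonal with positive right-hand sides. Your explicit induction over the stage index and the bootstrap step upgrading $\mbfy\geq 0$ to $\mbfy>0$ merely make rigorous what the paper compresses into ``the same argument can be applied'' and ``since $\mbfy^n>0$ and $\mbfM$ is nonsingular.''
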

\begin{proof}
From \eqref{MPRKmatvec} we see that $m_{ii}>0$ and $m_{ij}\leq 0$ for $i,j=1,\dots,N$ with $i\ne j$. Furthermore,
\begin{align*}
\abs{m_{ii}}&=1 + \Delta t\sum_{k=1}^s b_k \sum_{j=1}^N d_{ij}(\mbfy^{(k)})/\sigma_i\\
&>\Delta t\sum_{k=1}^s b_k \sum_{j=1}^N p_{ji}(\mbfy^{(k)})/\sigma_i=\sum_{\substack{j=1\\j\ne i}}^N(-m_{ji})=\sum_{\substack{j=1\\j\ne i}}^N\abs{m_{ji}},
\end{align*}
for $i=1,\dots,N$, which shows that $\mbfM^T$ is strictly diagonally dominant. 
Altogether, $\mbfM^T$ is a M-matrix (\cite[Lemma~6.2]{Axelsson1994}) and we have $\mbfM^{-T}\geq 0$.
Thus, even $\mbfM^{-1}\geq 0$ and hence $\mbfy^{n+1}=\mbfM^{-1}\mbfy^n >0$, since $\mbfy^n>0$ and $\mbfM$ is nonsingular.

 The same argument can be applied to prove the positivity of the stage values if $\delta=1$. 
 If $\delta=0$, the system matrices in \eqref{eq:MPRKMVstage} become diagonal and positive. 
 The right hand sides $\mbfy^n+\Delta t\mbfP(\mbfy^n)$ are positive as well, since $\mbfy^n>0$ and $\mbfP(\mbfy^n)\geq 0$. 
\end{proof}

\begin{rem}
 It is worth to point out that the MPRK schemes \eqref{eq:MPRK} will generate positive solutions, even when applied to a non-positive PDS.
 It is therefore the user's responsibility to take care of this issue.
\end{rem}

Lemmas~\ref{lem:MPRKcons} and \ref{lem:MPRKpos} show that the MPRK schemes as defined in Definition~\ref{def:MPRKdefn} possess the desired properties of unconditional positivity and conservation. The only quantities left to choose are the PWDs $\sigma_i$ and $\pi_i^{(k)}$ for $i=1,\dots,N$ and $k=1,\dots,s$. In the next section we derive necessary and sufficient conditions for MPRK schemes to become first or second order accurate.
%%%%%%%%%%%%%%%%%%%%%%%%%%%%%%%%%%%%%%%%%%%%%%%%%%%%%%%%%%%%%%%%%%%%%%%%%%%%%%%%%%%%%%%%%%%%%%%%%%%%%%%%%%%%%%%%%
\section{Order conditions for MPRK schemes}\label{sec:MPRKorder}
In this section we assume that all occurring PDS are positive. 
To prove convergence of the MPRK schemes we investigate the local truncation errors.
In doing so, we make frequent use of the Landau symbol $\O$ and omit to specify the limit process $\Delta t\to 0$ each time.
As customary, we identify $y_i^n$ and $y_i(t^n)$ for $i=1,\dots,N$ when studying the truncation errors. 
Furthermore, since we are dealing with positive PDS we assume $y_i^n>0$ for $i=1,\dots,N$.
% \begin{lem}\label{lem:ODeltat}
%  Let $y_j^*=y_j^n+\O(\Delta t)$ with $y_j^n>0$ and $\sigma_j>0$ for $j=1,\dots,N$.
%  Then the conditions
%  \begin{equation}\label{eq:aux1}
%   \frac{y_j^*-\sigma_j}{\sigma_j}=\O(\Delta t),\quad j=1,\dots,N,
%  \end{equation}
% and 
%  \begin{equation}\label{eq:aux2}
%   \sigma_j= y_j^n+\O(\Delta t),\quad j=1,\dots,N,
%  \end{equation}
%  are equivalent.
% \end{lem}
% \begin{proof}
%  Assuming \eqref{eq:aux2} we have
%  \[
%  \frac{y_j^*-\sigma_j}{\sigma_j}=\frac{y_j^n+\O(\Delta t)-(y_j^n+\O(\Delta t))}{y_j^n+\O(\Delta t)} = \frac{\O(\Delta t)}{y_j^n+\O(\Delta t)}\stackrel{y_j^n>0}{=}\O(\Delta t)
%  \]
%  for $j=1,\dots,N$.
%  Thus, \eqref{eq:aux1} follows from \eqref{eq:aux2}.
%  
%  
%  Now we assume that \eqref{eq:aux1} holds and that $\sigma_k\ne\O(1)$, i.\,e. $\sigma_k\to\infty$, for some $k\in\{1,\dots,N\}$.
%  Then we see 
%  \[
%  \frac{y_j^*-\sigma_j}{\sigma_j}=\frac{y_j^n+\O(\Delta t)-\sigma_j}{\sigma_j}\to -1\ne \O(\Delta t),
%  \]
%  which contradicts $\eqref{eq:aux1}$.
%  Hence, we conclude from $\eqref{eq:aux1}$ that $\sigma_j=\O(1)$ for $j=1,\dots,N$ is satisfied.
%  Multiplication of \eqref{eq:aux1} with $\sigma_j$ yields
%  \[
%  \O(\Delta t)=y_j^*-\sigma_j=y_j^n-\sigma_j+\O(\Delta t),\quad j=1,\dots,N.
%  \]
%  Thus, \eqref{eq:aux1} implies \eqref{eq:aux2} and we have proven the asserted equivalence.
% \end{proof}

The following lemmas are helpful to analyze the order of MPRK schemes. 
\begin{lem}\label{lem:mbound}
 Let $\mbfM$, $\mbfM^{(k)}$ be given by \eqref{MPRKmatvec},
 \eqref{eq:MPRKmatvecstage} with $\delta = 1$ and $\mbfM^{-1}=(\widetilde m_{ij})$,  \mbox{$(\mbfM^{(k)})^{-1}=\widetilde m_{ij}^{(k)}$}. Then, we have
 \[0\leq \widetilde m_{ij},\widetilde m_{ij}^{(k)}\leq 1,\quad i,j=1,\dots,N,\]
 for $k=1,\dots,s$.
 \end{lem}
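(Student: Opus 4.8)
The plan is to combine two facts about $\mbfM$ and $\mbfM^{(k)}$: that their inverses are entrywise nonnegative, which yields the lower bound, and that each of their columns sums to $1$, which together with nonnegativity yields the upper bound.

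First I would establish nonnegativity of the inverses. For $\mbfM$ this is already contained in the proof of Lemma~\ref{lem:MPRKpos}, where $\mbfM^T$ was shown to be a strictly diagonally dominant M-matrix, so that $\mbfM^{-1}\ge 0$. For $\mbfM^{(k)}$ with $\delta=1$ the identical argument applies: using $p_{ji}=d_{ij}$ and $p_{ii}=d_{ii}=0$ one checks that
\[
\abs{m_{ii}^{(k)}} = 1 + \Delta t\sum_{\nu=1}^{k-1}a_{k\nu}\sum_{j=1}^N d_{ij}(\mbfy^{(\nu)})/\pi_i^{(k)} = 1 + \sum_{j\ne i}\abs{m_{ji}^{(k)}},
\]
so that $(\mbfM^{(k)})^T$ is strictly diagonally dominant and hence $(\mbfM^{(k)})^{-1}\ge 0$. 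This gives $\widetilde m_{ij},\widetilde m_{ij}^{(k)}\ge 0$.

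For the upper bound the key step is to show that every column of $\mbfM$ sums to $1$, i.e.\ $\mathbf 1^T\mbfM=\mathbf 1^T$. Fixing a column index $j$ and using $p_{ij}=d_{ji}$ together with $d_{jj}=0$, the off-diagonal contributions combine as
\[
\sum_{i\ne j}m_{ij} = -\Delta t\sum_{k=1}^s b_k\sum_{i=1}^N d_{ji}(\mbfy^{(k)})/\sigma_j,
\]
which exactly cancels the destruction sum in $m_{jj}=1+\Delta t\sum_{k=1}^s b_k\sum_{l=1}^N d_{jl}(\mbfy^{(k)})/\sigma_j$, leaving $\sum_{i=1}^N m_{ij}=1$. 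This is the matrix-level form of the conservation identity of Lemma~\ref{lem:MPRKcons}. The same computation with $a_{k\nu}$ and $\pi_j^{(k)}$ in place of $b_k$ and $\sigma_j$ gives $\mathbf 1^T\mbfM^{(k)}=\mathbf 1^T$.

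Transposing, $\mathbf 1^T\mbfM=\mathbf 1^T$ implies $\mathbf 1^T=\mathbf 1^T\mbfM^{-1}$, so each column of $\mbfM^{-1}$ also sums to $1$, that is $\sum_{i=1}^N\widetilde m_{ij}=1$. Since every entry is nonnegative, no single entry can exceed its column sum, so $\widetilde m_{ij}\le 1$; combined with the lower bound this yields $0\le\widetilde m_{ij}\le1$, and likewise $0\le\widetilde m_{ij}^{(k)}\le1$. The only real content is the column-sum identity; everything else is bookkeeping, so I expect no serious obstacle beyond carefully tracking indices when reusing the M-matrix argument for the stage matrices.
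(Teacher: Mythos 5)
Your proposal is correct and follows essentially the same route as the paper's own proof: the column-sum identity $\mbfe^T\mbfM=\mbfe^T$ derived from $p_{ij}=d_{ji}$, transposed to $\mbfe^T=\mbfe^T\mbfM^{-1}$, combined with $\mbfM^{-1}\geq 0$ from the M-matrix argument of Lemma~\ref{lem:MPRKpos}. The only difference is cosmetic: you spell out the diagonal-dominance and column-sum computations for the stage matrices $\mbfM^{(k)}$, where the paper simply remarks that the same argument applies.
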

\begin{proof}
 We show the argument for the matrix $\mbfM$, the proof for $\mbfM^{(k)}$, $k=1,\dots,s$ follows the same lines.
 
 Summation of the $j$th column of $\mbfM$ and taking advantage of the property $p_{ij}=d_{ji}$ yields
 \begin{align*}
 \sum_{i=1}^N m_{ij} &= 1 + \Delta t\sum_{k=1}^s b_k 
 \sum_{\ell=1}^N d_{j\ell}(\mbfy^{(k)})/\sigma_j -  \sum_{i=1}^N\Delta t\sum_{k=1}^s b_k  p_{ij}(\mbfy^{(k)})/\sigma_j\\
 &= 1 + \Delta t\sum_{k=1}^s b_k\biggl(
 \sum_{\ell=1}^N d_{j\ell}(\mbfy^{(k)})/\sigma_j - \sum_{i=1}^N  d_{ji}(\mbfy^{(k)})/\sigma_j\biggr)\\
 &=1.
 \end{align*}
 This can also be stated as $\mbfe^T\mbfM=\mbfe^T$, with $\mbfe=(1,\dots,1)^T\in\mathbb{R}^N$ and consequently we get $\mbfe^T=\mbfe^T\mbfM^{-1}$.
 Since we know that $\mbfM^{-1}\geq 0$ from Lemma~\ref{lem:MPRKpos}, we can conclude
 \[0\leq \widetilde m_{ij}\leq 1,\quad i,j=1,\dots,N.\]
\end{proof}

\begin{lem}\label{lem:lemO}
The statement
\begin{equation}\label{eq:lemOcond1}
  \xi - \mu \eta=\O(\Delta t^s)\text{ for all $\mu>0$},
 \end{equation}
is equivalent to
\begin{equation}\label{eq:lemOcond2}
 \xi = \O(\Delta t^s) \text{ and } \eta = \O(\Delta t^s).
\end{equation}
\end{lem}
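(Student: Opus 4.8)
The plan is to prove the two implications separately. The implication \eqref{eq:lemOcond2} $\Rightarrow$ \eqref{eq:lemOcond1} is the routine one: it follows immediately from the linearity of the Landau symbol, since for every fixed $\mu>0$ the difference of two $\O(\Delta t^s)$ terms, one of them scaled by the constant $\mu$, is again $\O(\Delta t^s)$. I would dispatch this in a single line.

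For the converse \eqref{eq:lemOcond1} $\Rightarrow$ \eqref{eq:lemOcond2}, the key observation is that it suffices to invoke the hypothesis at just two distinct positive values of $\mu$. Choosing, say, $\mu=1$ and $\mu=2$ gives bounds $\abs{\xi-\eta}\leq C_1\abs{\Delta t}^s$ and $\abs{\xi-2\eta}\leq C_2\abs{\Delta t}^s$ near $\Delta t=0$, with constants $C_1,C_2$ that may depend on the chosen value of $\mu$. Subtracting the two expressions isolates $\eta$, since $\eta=(\xi-\eta)-(\xi-2\eta)$, and the triangle inequality then yields $\abs{\eta}\leq(C_1+C_2)\abs{\Delta t}^s$, that is, $\eta=\O(\Delta t^s)$. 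Feeding this back through $\xi=(\xi-\eta)+\eta$ and applying the triangle inequality once more gives $\xi=\O(\Delta t^s)$, completing the direction.

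There is no genuine obstacle here; the only point requiring a moment's care is that the implied constant in $\xi-\mu\eta=\O(\Delta t^s)$ is permitted to depend on $\mu$. This is precisely why one must avoid passing to a limit in $\mu$ and instead argue with finitely many---in fact just two---fixed values, for which the relevant constants stay under control. Any two distinct choices of $\mu$ work equally well, so the specific pair is immaterial to the conclusion.
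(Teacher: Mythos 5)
Your proof is correct and follows essentially the same route as the paper: both arguments use the hypothesis at two distinct values of $\mu$ (the paper's general $\mu_1\ne\mu_2$ versus your concrete $\mu=1,2$), subtract to isolate $\eta$, and then substitute back to recover $\xi$, with the converse being immediate from linearity. Your explicit remark that the implied constants may depend on $\mu$ --- which is why finitely many fixed values must be used --- is a point the paper leaves implicit, but the underlying argument is identical.
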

\begin{proof}
Let $\mu_1,\mu_2>0$ with $\mu_1\ne \mu_2$.
Due to \eqref{eq:lemOcond1} we have
\[\xi - \mu_1\eta=\O(\Delta t^s),\quad\xi - \mu_2\eta=\O(\Delta t^s).\]
Subtracting the first from the second equation shows
\[
(\mu_1-\mu_2)\eta=\O(\Delta t^s),
\]
which implies 
\[
\eta=\O(\Delta t^s),
\]
since $\mu_1-\mu_2\ne 0$ is constant.
Inserting this into \eqref{eq:lemOcond1}, we can conclude
\[
\xi=\O(\Delta t^s)
\]
as well. 
On the other hand, if \eqref{eq:lemOcond2} holds true, so does
$\xi-\mu\eta=\O(\Delta t^s)$ for all $\mu>0$.
\end{proof}

To derive necessary conditions that allow for a certain order of a MPRK scheme, it suffices to consider specific PDS.
In this regard, the following family of PDS will be very helpful. 
% Let $k,l\in\{1,\dots,N\}$ and $k\ne l$. We consider the PDS 
% \begin{subequations}\label{eq:PDSorder}
% \begin{equation}
% \frac{d y_i}{dt}(t)=P_i^{kl}(\mbfy(t))-D_i^{kl}(\mbfy(t)),\quad i=1,\dots,N,
% \end{equation}
% with
% \begin{equation}
% P_i^{kl}(\mbfy) = \begin{cases}y_k, & i=l,\\\phantom{y}0, &\text{otherwise},\end{cases}
% \quad\text{and}\quad
% D_i^{kl}(\mbfy) = \begin{cases}y_k, & i=k,\\\phantom{y}0, &\text{otherwise}.\end{cases}
% \end{equation}
% \end{subequations}
% The initial values are $y_i(0)=1$ for $i=1,\dots,N$.
% 
% The indices $k$ and $l$ specify for which constituent a nonzero production or destruction term is present.
% This can be seen easily by writing the PDS in the form
% \begin{align*}
% \frac{dy_k}{dt}(t) =-y_k,\qquad
% \frac{dy_l}{dt}(t) = y_k,\qquad
% \frac{dy_i}{dt}(t) = 0, \quad i\in\{1,\dots,N\}\setminus\{k,l\}.
% \end{align*}
% The exact solution is given by
% \begin{align*}
% y_k(t) = e^{-t},\qquad
% y_l(t) = 2-e^{-t},\qquad
% y_i(t) =1, \quad i\in\{1,\dots,N\}\setminus\{k,l\},
% \end{align*}
% which shows that the PDS is positive.
% The PDS is also fully conservative,
% since we can write
% \[
% P_i^{kl}(\mbfy) = \sum_{j=1}^N p_{ij}^{kl}(\mbfy),\quad D_i^{kl}(\mbfy) = \sum_{j=1}^N d_{ij}^{kl}(\mbfy),
% \]
% with
% \[
% p_{ij}^{kl}(\mbfy)=\begin{cases}y_k,& i=l\text{ and }j=k,\\\phantom{y}0,&\text{otherwise}\end{cases},\quad
% d_{ij}^{kl}(\mbfy)=\begin{cases}y_k,& i=k\text{ and }j=l,\\\phantom{y}0,&\text{otherwise}.\end{cases}
% \]
Given parameters $I,J\in\{1,\dots,N\}$, $I\ne J$ and $\mu>0$, we consider
\begin{subequations}\label{eq:PDSorder}
\begin{equation}
 \frac{dy_i}{dt}(t) = \PPDS_i(\mbfy(t)) - \DPDS_i(\mbfy(t)),\quad i=1,\dots,N,
\end{equation}
with
\begin{equation}
\PPDS_i(\mbfy) = \begin{cases}\mu y_I, & i=J,\\ \phantom{y}0, &\text{otherwise},\end{cases}
\quad\text{and}\quad
\DPDS_i(\mbfy) = \begin{cases}\mu y_I, & i=I,\\ \phantom{y}0, &\text{otherwise},\end{cases}
\end{equation}
\end{subequations}
%\end{subequations}
 and initial values $y_i(0)=1$ for $i=1,\dots,N$.
This PDS can be written in the form
\begin{align*}
\frac{dy_I}{dt} =-\mu y_I,\qquad
\frac{dy_J}{dt} =\mu y_I,\qquad
\frac{dy_i}{dt} = 0, \quad i\in\{1,\dots,N\}\setminus\{I,J\}.
\end{align*}
and the exact solution is given by
\begin{align*}
y_I(t) = e^{-\mu t},\quad
y_J(t) = 2-e^{-\mu t},\quad
y_i(t) =1, \quad i\in\{1,\dots,N\}\setminus\{I,J\}.
\end{align*}
This shows that the PDS is positive and it is also fully conservative,
since we can write
\[
\PPDS_i(\mbfy) = \sum_{j=1}^N \pPDS_{ij}(\mbfy),\quad \DPDS_i(\mbfy) = \sum_{j=1}^N \dPDS_{ij}(\mbfy),
\]
with
\[
\pPDS_{ij}(\mbfy)=\begin{cases}\mu y_I,& i=J\text{ and }j=I,\\\phantom{y}0,&\text{otherwise},\end{cases}\quad
\dPDS_{ij}(\mbfy)=\begin{cases}\mu y_I,& i=I\text{ and }j=J,\\\phantom{y}0,&\text{otherwise}.\end{cases}
\]

\subsection{First order MPRK schemes}\label{sec:MPRKorder1}
The only first order explicit one-stage Runge-Kutta scheme is the forward Euler method, as given by the Butcher tableau
\[
\begin{array}{c|c}
 0        &   0\\\hline
          & 1
\end{array}.
\]
The corresponding MPRK scheme reads
\begin{equation}\label{eq:MPRK11}
y_i^{n+1} = y_i^n+\Delta t\sum_{j=1}^N \biggl(p_{ij}(\mbfy^n)\frac{y_j^{n+1}}{\sigma_j}-d_{ij}(\mbfy^n)\frac{y_i^{n+1}}{\sigma_i}\biggr),\quad i=1,\dots,N.
\end{equation}
In \cite{BDM2003} the choices $\sigma_i=y_i^n$ for $i=1,\dots,N$ were made to obtain a first order unconditionally positive and conservative scheme. 
The next theorem shows that this is not the only possible choice of Patankar-weights to obtain a first order scheme.
\begin{thm}\label{lem:MPRK11}
The one-stage MPRK scheme \eqref{eq:MPRK11} is first order accurate, if and only if
the conditions
\begin{equation}\label{eq:MPRK11order1}
{\sigma_i}={y_i^{n}}+\O(\Delta t),\quad i=1,\dots,N,
\end{equation}
are satisfied.
\end{thm}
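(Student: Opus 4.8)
The plan is to characterize first order accuracy, equivalently a local truncation error of order $\O(\Delta t^2)$, and to show this holds precisely when \eqref{eq:MPRK11order1} is satisfied. Throughout I identify $y_i^n$ with the exact value $y_i(t^n)>0$ and use that, by requirement~1 of Definition~\ref{def:MPRKdefn}, the weights $1/\sigma_i$ together with the bounded terms $p_{ij},d_{ij}$ stay bounded as $\Delta t\to0$. First I would derive a usable expansion of $y_i^{n+1}$. Since the unknown appears on both sides of \eqref{eq:MPRK11}, I cannot expand directly; instead I write the step in the matrix-vector form $\mbfM\mbfy^{n+1}=\mbfy^n$ of \eqref{eq:MPRKMV}, invoke Lemma~\ref{lem:mbound} to get $0\le\widetilde m_{ij}\le1$ and hence the crude bound $\mbfy^{n+1}=\mbfy^n+\O(\Delta t)$, and then bootstrap: substituting this coarse estimate back into the right-hand side of \eqref{eq:MPRK11} and absorbing the correction into the $\Delta t$-prefactor yields
\begin{equation*}
 y_i^{n+1}=y_i^n+\Delta t\sum_{j=1}^N\Bigl(p_{ij}(\mbfy^n)\tfrac{y_j^n}{\sigma_j}-d_{ij}(\mbfy^n)\tfrac{y_i^n}{\sigma_i}\Bigr)+\O(\Delta t^2).
\end{equation*}

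Comparing this with the Taylor expansion $y_i(t^{n+1})=y_i^n+\Delta t\bigl(P_i(\mbfy^n)-D_i(\mbfy^n)\bigr)+\O(\Delta t^2)$ and recalling \eqref{eq:pijdij}, the truncation error becomes
\begin{equation*}
 y_i^{n+1}-y_i(t^{n+1})=\Delta t\sum_{j=1}^N\Bigl(p_{ij}(\mbfy^n)\bigl(\tfrac{y_j^n}{\sigma_j}-1\bigr)-d_{ij}(\mbfy^n)\bigl(\tfrac{y_i^n}{\sigma_i}-1\bigr)\Bigr)+\O(\Delta t^2).
\end{equation*}
For sufficiency I would assume \eqref{eq:MPRK11order1}; then $\tfrac{y_i^n}{\sigma_i}-1=\O(\Delta t)$ for every $i$, each bracket above is $\O(\Delta t)$, the leading term drops to $\O(\Delta t^2)$, and the method is first order accurate.

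For necessity I would feed the scheme the test family \eqref{eq:PDSorder}. Fixing $I\ne J$ and $\mu>0$, the only nonvanishing transfer terms are $\pPDS_{JI}=\dPDS_{IJ}=\mu y_I$, so the truncation-error formula in component $i=I$ collapses to $-\Delta t\,\mu\,y_I^n\bigl(\tfrac{y_I^n}{\sigma_I}-1\bigr)+\O(\Delta t^2)$. First order accuracy forces this to be $\O(\Delta t^2)$, and dividing by the fixed positive constant $\mu y_I^n$ gives $\tfrac{y_I^n}{\sigma_I}-1=\O(\Delta t)$, that is $\sigma_I=y_I^n+\O(\Delta t)$. Since $I\in\{1,\dots,N\}$ is arbitrary, \eqref{eq:MPRK11order1} must hold for all components.

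I expect the main obstacle to be the refined expansion of $y_i^{n+1}$ rather than the comparison with the exact solution: because of the scheme's implicitness the bootstrap via Lemma~\ref{lem:mbound} is essential, and one must also check that the $\O(\Delta t^2)$ remainders, which depend on $\mu$ through the test PDS, are genuinely second order, so that division by the constant $\mu y_I^n$ in the necessity step is legitimate.
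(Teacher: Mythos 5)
Your sufficiency argument is sound and is essentially the paper's: under \eqref{eq:MPRK11order1} one has $1/\sigma_j=\O(1)$ because $y_j^n>0$, so Lemma~\ref{lem:mbound} gives $y_j^{n+1}/\sigma_j=\O(1)$, the scheme then yields $\mbfy^{n+1}=\mbfy^n+\O(\Delta t)$, and the bootstrap produces the refined expansion and the $\O(\Delta t^2)$ truncation error. The gap is in the necessity direction, and it originates in your opening claim that ``by requirement~1 of Definition~\ref{def:MPRKdefn} the weights $1/\sigma_i$ stay bounded as $\Delta t\to0$.'' Unconditional positivity of $\sigma_i$ does not imply this: $\sigma_i=\Delta t$ is unconditionally positive and independent of $y_i^{n+1}$, yet $1/\sigma_i\to\infty$. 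Without that bound your refined expansion
\begin{equation*}
 y_i^{n+1}=y_i^n+\Delta t\sum_{j=1}^N\Bigl(p_{ij}(\mbfy^n)\tfrac{y_j^n}{\sigma_j}-d_{ij}(\mbfy^n)\tfrac{y_i^n}{\sigma_i}\Bigr)+\O(\Delta t^2)
\end{equation*}
is not justified in the necessity direction: replacing $y_j^{n+1}/\sigma_j$ by $y_j^n/\sigma_j$ costs $\Delta t\,(y_j^{n+1}-y_j^n)/\sigma_j$, and although first-order accuracy gives $y_j^{n+1}-y_j^n=\O(\Delta t)$, this cannot be absorbed into $\O(\Delta t^2)$ unless you already know $1/\sigma_j=\O(1)$ --- which is precisely (part of) what the necessity statement must establish, since it has to rule out choices like $\sigma_i=\Delta t$. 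The same issue undercuts the claim that Lemma~\ref{lem:mbound} alone gives the crude bound $\mbfy^{n+1}=\mbfy^n+\O(\Delta t)$; by itself it only gives $\mbfy^{n+1}=\O(1)$. So as written, your necessity proof is circular at the decisive step: it silently restricts to schemes whose PWDs are already bounded away from zero.

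The repair is exactly where the paper is more careful than you are: do not touch the ratio $y_I^{n+1}/\sigma_I$. From the assumed first-order accuracy and the scheme one gets the exact relation $\sum_{j=1}^N\bigl(p_{Ij}^n(y_j^{n+1}/\sigma_j-1)-d_{Ij}^n(y_I^{n+1}/\sigma_I-1)\bigr)=\O(\Delta t)$, which for the test PDS \eqref{eq:PDSorder} collapses to $-\mu y_I^n\bigl(y_I^{n+1}/\sigma_I-1\bigr)=\O(\Delta t)$, i.e.\ $y_I^{n+1}/\sigma_I=1+\O(\Delta t)$ with no expansion of the Patankar-weight needed. Only now does one conclude $\sigma_I\to y_I^{n+1}\to y_I^n>0$, so the boundedness of $1/\sigma_I$ comes out a posteriori, and combining with $y_I^{n+1}=y_I^n+\O(\Delta t)$ gives $\sigma_I=y_I^n+\O(\Delta t)$. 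With this reordering the rest of your argument (the choice of the test PDS, the collapse to the single destruction term, and the arbitrariness of $I$) goes through exactly as in the paper. Your closing worry about the $\mu$-dependence of the remainders is a non-issue, since $\mu$ is held fixed as $\Delta t\to0$.
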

\begin{proof}
For the sake of simplicity, we use the notation $\phi^*$ to represent $\phi(\mbfy^*)$ for a given function $\phi$.
The exact solution of \eqref{eq:pds} at time level $t^{n+1}$ can be expressed as
\begin{equation}\label{eq:MPRK11exact}
 y_i(t^{n+1}) = y_i^n + \Delta t(P_i^n-D_i^n) + \O(\Delta t^2)
\end{equation}
for $i=1,\dots,N$.

First, we want to derive necessary conditions, which allow for first order accuracy of the MPRK scheme \eqref{eq:MPRK11}.
For this purpose, we assume that \eqref{eq:MPRK11} is a first order scheme, this is 
\begin{equation}\label{eq:MPRK11aux2}
 y_i^{n+1} = y_i^n + \Delta t(P_i^n-D_i^n) + \O(\Delta t^2)
\end{equation}
for $i=1,\dots,N$.
From \eqref{eq:MPRK11exact} and \eqref{eq:MPRK11aux2} we find $y_i^{n+1}-y(t_i^{n+1})=\O(\Delta t^2)$ for $i=1,\dots,N$.
Utilizing \eqref{eq:MPRK11}, this can be written in the form
\begin{equation*} 
\Delta t\sum_{j=1}^N \biggl(p_{ij}^n\frac{y_j^{n+1}}{\sigma_j}-d_{ij}^n\frac{y_i^{n+1}}{\sigma_i}\biggr)-\Delta t(P_i^n-D_i^n) = \O(\Delta t^2),
\end{equation*} 
 and
further simplifications yield
\begin{equation}\label{eq:MPRK11aux1} 
\sum_{j=1}^N \biggl(p_{ij}^n\biggl(\frac{y_j^{n+1}}{\sigma_j}-1\biggr)-d_{ij}^n\biggl(\frac{y_i^{n+1}}{\sigma_i}-1\biggr)\biggr) = \O(\Delta t)
\end{equation}
for $i=1,\dots,N$.
Now we assume that the scheme is used to solve the PDS \eqref{eq:PDSorder} with parameters $I,J\in\{1,\dots,N\}$, $I\ne J$ and $\mu=1$.
In this case, equation $\eqref{eq:MPRK11aux1}$ with $i=I$ becomes
\begin{equation*}
-y_I^n \biggl(\frac{y_I^{n+1}}{\sigma_I}-1\biggr)=\O(\Delta t).
\end{equation*}
and can be rewritten as
\begin{equation}\label{eq:MPRK11aux3}
\frac{y_I^{n+1}}{\sigma_I}=1+\O(\Delta t).
\end{equation}
Since $y_I^{n+1}\to y_I^n$ due to \eqref{eq:MPRK11aux2}, it follows that $\sigma_I\to y_I^n$ as well.
Thus, substitution of $\eqref{eq:MPRK11aux2}$ into \eqref{eq:MPRK11aux3}  yields
\begin{equation*}
 \sigma_I = y_I^n+\O(\Delta t), 
\end{equation*}
since $y_I^n>0$.
As $I\in\{1,\dots,N\}$ was chosen arbitrary, we see that \eqref{eq:MPRK11order1} is necessary for first order accuracy.
 
Now we show that \eqref{eq:MPRK11order1} is also sufficient to obtain a first order scheme.
Expressing one step of the scheme \eqref{eq:MPRK11} using \eqref{eq:MPRKMV}, and utilizing \eqref{eq:MPRK11order1} and Lemma~\ref{lem:mbound}, we see
\[
\frac{y_i^{n+1}}{\sigma_i}=\sum_{j=1}^N \underbrace{\widetilde m_{ij}}_{\in[0,1]} \frac{y_j^n}{\sigma_i}=\O(1),
\]
for $i=1,\dots,N$ with $(\widetilde m_{ij})=\mbfM^{-1}\in\R^{N\times N}$.
Consequently, due to \eqref{eq:MPRK11} we have
\begin{equation*}
 y_i^{n+1} = y_i^n+\Delta t\sum_{j=1}^N \biggl(p_{ij}^n\frac{y_j^{n+1}}{\sigma_j}-d_{ij}^n\frac{y_i^{n+1}}{\sigma_i}\biggr)=y_i^n+\O(\Delta t)
\end{equation*}
for $i=1,\dots,N$.
Substituting this into \eqref{eq:MPRK11} and using \eqref{eq:MPRK11order1} yields
\begin{equation*}
 y_i^{n+1} = y_i^n+\Delta t\sum_{j=1}^N \biggl(p_{ij}^n\frac{y_j^n}{\sigma_j}-d_{ij}^n\frac{y_i^n}{\sigma_i}\biggr)+\O(\Delta t^2)
\end{equation*}
for $i=1,\dots,N$.
Finally, according to \eqref{eq:MPRK11order1} and \eqref{eq:MPRK11exact} we get
\begin{align*}
 y_i^{n+1} &\eq^{\eqref{eq:MPRK11order1}} y_i^n+\Delta t\sum_{j=1}^N \left(p_{ij}^n(1+\O(\Delta t))-d_{ij}^n(1+\O(\Delta t))\right)+\O(\Delta t^2)\\
 &=y_i^n+\Delta t\sum_{j=1}^N \left(p_{ij}^n-d_{ij}^n\right)+\O(\Delta t^2)\\
 &=y_i^n+\Delta t(P_i^n-D_i^n)+\O(\Delta t^2)\\
 &\eq^{\eqref{eq:MPRK11exact}}y(t_i^{n+1})+\O(\Delta t^2)
\end{align*}
for $i=1,\dots,N$.
Hence, condition \eqref{eq:MPRK11order1} suffices to obtain a first order scheme. 
\end{proof}
The theorem shows, that the choice $\sigma_i=y_i^n$ for $i=1,\dots,N$ as made in \cite{BDM2003} seems likely, but is not necessary.
The corresponding MPRK scheme reads
\begin{equation}\label{eq:MPE}
 y_i^{n+1} = y_i^n+\Delta t\sum_{j=1}^N \biggl(p_{ij}(\mbfy^n)\frac{y_j^{n+1}}{y_j^n}-d_{ij}^n(\mbfy^n)\frac{y_i^{n+1}}{y_i^n}\biggr)
\end{equation}
for $i=1,\dots,N$ and was named modified Patankar-Euler (MPE) scheme.

We can use the additional degree of freedom to design methods which minimize the truncation error or are even of second order for specific differential equations.
For instance, the choice $\sigma_i=y_i^n(1-3\Delta t)$ for $i=1,\dots,N$  results in a second order scheme for the linear test problem \eqref{eq:lintest} of Section~\ref{sec:testcases}.
Unfortunately, the resulting scheme is not a MPRK scheme, since $\sigma_i$ becomes non-positive for $\Delta t\geq 1/3$. To overcome this issue, we can define
\begin{equation}\label{eq:sigMPElin}
\sigma_i=\begin{cases}y_i^n(1-3\Delta t), & \Delta t< 1/3,\\ y_i^n, & \Delta t \geq 1/3,\end{cases}\quad i=1,\dots,N.
\end{equation}
We will refer to the scheme \eqref{eq:MPRK11} with Patankar-weights \eqref{eq:sigMPElin} as MPElin.
Numerical results demonstrating the scheme's improved accuracy can be found in Section~\ref{sec:numres}.

In complex applications MPRK schemes are usually used as time integrators of biogeochemical submodels.
The above example shows that it may be fruitful to search for optimal PWDs for a specific submodel, as slight changes of an existing code might really improve accuracy.

The same ideas could even be used to minimize truncation errors or possibly improve the order of higher order MPRK schemes. 
However, in order to focus on a general investigation of MPRK schemes, we don't pursue this idea any further in this paper.
%%%%%%%%%%%%%%%%%%%%%%%%%%%%%%%%%%%%%%%%%%%%%%%%%%%%%%%%%%%%%%%%%%%%%%%%%%%%%%%%%%%%%%%%%%%%%%%%%%%%%%%%%%%%%%%%%
\subsection{Second order MPRK schemes}\label{sec:MPRKorder2}
The second order MPRK scheme introduced in \cite{BDM2003} is a modification of Heun's method. 
In this section we will show how MPRK schemes based on general explicit second order two-stage Runkge-Kutta schemes can be developed.

A MPRK scheme \eqref{eq:MPRK} with two stages reads
\begin{subequations}\label{eq:MPRK22}
\begin{align}
  &\begin{aligned}
    \mathllap{y_i^{(1)}} &= y_i^n,
  \end{aligned}\\ \label{eq:MPRK2c2}
  &\begin{aligned}
    \mathllap{y_i^{(2)}} &= y_i^n + a_{21}\Delta t\sum_{j=1}^N\biggl(p_{ij}(\mbfy^{(1)})(1-\delta)+ p_{ij}(\mbfy^{(1)})\frac{y_j^{(2)}}{\pi_j}\delta-d_{ij}(\mbfy^{(1)})\frac{y_i^{(2)}}{\pi_i}\biggr),
  \end{aligned}\\ \label{eq:MPRK2cnp1}
  &\begin{multlined}[b][.7\columnwidth]
    \mathllap{y_i^{n+1}} = y_i^n + \Delta t\sum_{j=1}^N\biggl( \bigl(b_1 p_{ij}(\mbfy^{(1)})+b_2 p_{ij}(\mbfy^{(2)})\bigr)\frac{y_j^{n+1}}{\sigma_j}\\
         - \bigl(b_1 d_{ij}(\mbfy^{(1)})+ b_2 d_{ij}(\mbfy^{(2)})\bigr)\frac{y_i^{n+1}}{\sigma_i}\biggr),
  \end{multlined}  
\end{align}
\end{subequations}
for $i=1,\dots,N$ with $\delta\in\{0,1\}$.
In this setting, the original MPRK scheme introduced in \cite{BDM2003} is obtained by setting $\delta = 1$, $\pi_i=y_i^n$ and $\sigma_i=y_i^{(2)}$ for $i=1,\dots,N$.

The next theorem presents necessary and sufficient conditions for second order accuracy of two-stage MPRK schemes. 
\begin{thm}\label{thm:MPRK22}
 Given non-negative parameters of an explicit second order Runge-Kutta scheme, this is \begin{equation*}%\label{eq:MPRK22RK2}
b_1+b_2=1,\quad a_{21}b_2=\frac12,
 \end{equation*}
 the MPRK scheme \eqref{eq:MPRK22} is of second order, if and only if the conditions
\begin{subequations}\label{eq:MPRK22order_neu}
\begin{equation}\label{eq:MPRK22order1_neu}
{\pi_i} = {y_i^n}+\O(\Delta t), \quad i=1,\dots,N,
\end{equation}
and
\begin{equation}\label{eq:MPRK22order2_neu}
{\sigma_i}={y_i^n+\Delta t (P_i^n-D_i^n)}+\O(\Delta t^2),\quad i=1,\dots,N
\end{equation}
\end{subequations}
are satisfied.
\end{thm}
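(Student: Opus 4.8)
The plan is to follow the two-part template of Theorem~\ref{lem:MPRK11}. I first record the Taylor expansion of the exact solution of \eqref{eq:pds},
\[
y_i(t^{n+1}) = y_i^n + \Delta t(P_i^n - D_i^n) + \tfrac{\Delta t^2}{2}\tfrac{d}{dt}(P_i-D_i)|^n + \O(\Delta t^3),
\]
and then show that \eqref{eq:MPRK22} reproduces it to order $\Delta t^2$ precisely under \eqref{eq:MPRK22order1_neu}--\eqref{eq:MPRK22order2_neu}. As before I write $\phi^\ast=\phi(\mbfy^\ast)$ and identify $y_i^n=y_i(t^n)$.

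For sufficiency I would start with the stage \eqref{eq:MPRK2c2}. Using $\mbfy^{(1)}=\mbfy^n$, the bound $y_i^{(2)}/\pi_i=\O(1)$ (from Lemma~\ref{lem:mbound} when $\delta=1$, and from the diagonal structure of \eqref{eq:MPRKMVstage} when $\delta=0$) and $\pi_i=y_i^n+\O(\Delta t)$, one gets $y_i^{(2)}=y_i^n+\O(\Delta t)$ and then, substituting back, the first-order accurate stage $y_i^{(2)} = y_i^n + a_{21}\Delta t(P_i^n-D_i^n) + \O(\Delta t^2)$, independently of $\delta$. The decisive point for the update \eqref{eq:MPRK2cnp1} is that \eqref{eq:MPRK22order2_neu} renders the Patankar factor inert to second order: since the method is at least first order, $y_j^{n+1}=y_j^n+\Delta t(P_j^n-D_j^n)+\O(\Delta t^2)$, which is exactly the expansion prescribed for $\sigma_j$, so numerator and denominator agree to first order and $y_j^{n+1}/\sigma_j=1+\O(\Delta t^2)$. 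Combining this with $b_1+b_2=1$, $a_{21}b_2=\tfrac12$, the stage expansion above (which fixes the first-order terms of $p_{ij}(\mbfy^{(2)})$ and $d_{ij}(\mbfy^{(2)})$), and the chain rule $\sum_j\nabla(p_{ij}-d_{ij})^n\cdot(\mbfP^n-\mbfD^n)=\tfrac{d}{dt}(P_i-D_i)|^n$, the right-hand side of \eqref{eq:MPRK2cnp1} collapses onto the exact expansion above, giving second order.

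For necessity I would assume \eqref{eq:MPRK22} is second order. First-order accuracy already forces $\sigma_i=y_i^n+\O(\Delta t)$ exactly as in Theorem~\ref{lem:MPRK11}, because to leading order $b_1p_{ij}^{(1)}+b_2p_{ij}^{(2)}=p_{ij}^n+\O(\Delta t)$ and the update degenerates to the one-stage scheme. Writing $\sigma_i=y_i^n+s_i\Delta t+\O(\Delta t^2)$ and $y_i^{(2)}=y_i^n+v_i\Delta t+\O(\Delta t^2)$, expanding \eqref{eq:MPRK2cnp1} to order $\Delta t$ and matching the $\Delta t^2$ coefficient with the exact solution yields, for every PDS, the identity
\[
b_2\,\nabla(P_i-D_i)^n\cdot\mathbf v + \sum_{j=1}^N p_{ij}^n\tau_j - D_i^n\tau_i = \tfrac12\,\nabla(P_i-D_i)^n\cdot(\mbfP^n-\mbfD^n),
\]
with $\tau_j=\bigl((P_j^n-D_j^n)-s_j\bigr)/y_j^n$ and $\mathbf v=(v_1,\dots,v_N)^T$ linked to the $\pi_i$ through \eqref{eq:MPRK2c2}. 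Conditions \eqref{eq:MPRK22order1_neu}--\eqref{eq:MPRK22order2_neu} are exactly $\mathbf v=a_{21}(\mbfP^n-\mbfD^n)$ and $\tau_j\equiv0$, and one checks directly that these solve the identity. To force them I would read the identity off on the family \eqref{eq:PDSorder}, exploiting the arbitrariness of $\mu>0$ together with Lemma~\ref{lem:lemO} to try to isolate the contribution of $\sigma_I$ (the $s_i$-part) from that of $\pi_I$ (the $\mathbf v$-part).

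The main obstacle is precisely this separation. On a single linear reaction of \eqref{eq:PDSorder} the stage error (carried by $\pi$) and the weight error (carried by $\sigma$) enter the $\Delta t^2$ term only additively, so they can compensate: taking $\pi_i=y_i^n/c$ and shifting $\sigma_i$ accordingly still reproduces the linear test solution to second order for every $c>0$. The compensation is broken only by the curvature of the rates, i.e. by probing $\nabla p_{ij}$ and $\nabla d_{ij}$ with a genuinely nonlinear reaction such as $d_{ij}(\mbfy)=y_i^2$, where a mis-scaled stage creates a quadratic-in-$\Delta t$ defect that no first-order modification of $\sigma_i$ can absorb. Hence the clean way to close the argument is to impose the identity above on a rich enough collection of PDS (linear reactions from \eqref{eq:PDSorder} for the $\sigma$-part, at least one nonlinear reaction for the $\pi$-part) so that $p_{ij}^n$ and $\nabla p_{ij}^n$ may be varied independently; this forces $\tau_j\equiv0$ and $\mathbf v=a_{21}(\mbfP^n-\mbfD^n)$, hence \eqref{eq:MPRK22order2_neu} and, through \eqref{eq:MPRK2c2}, \eqref{eq:MPRK22order1_neu}. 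I expect the bookkeeping of these test problems together with the $\delta$-dependence of the stage to be the most delicate part of the proof.
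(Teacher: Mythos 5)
Your sufficiency argument coincides with the paper's (the same bootstrap: boundedness of the Patankar weights via Lemma~\ref{lem:mbound}, the first-order stage expansion, $y_i^{n+1}/\sigma_i=1+\O(\Delta t^2)$, collapse onto the exact Taylor expansion, plus a separate $\delta=0$ check), so the real comparison is in the necessity direction, where you depart from the paper --- and, as it turns out, for good reason. The paper never leaves the linear family \eqref{eq:PDSorder}: it inserts the exact (linear) expansion of the stage destruction term into the truncation-error relation, arrives at
\[
\frac{y_I^{n+1}}{\sigma_I}-1-\mu\frac{\Delta t}{2}\biggl(\frac{y_I^{(2)}}{\pi_I}\frac{y_I^{n+1}}{\sigma_I}-1\biggr)=\O(\Delta t^2),
\]
and then splits this into \eqref{eq:MPRK22order1_neu} and \eqref{eq:MPRK22order2_neu} by Lemma~\ref{lem:lemO}, ``since $\mu>0$ was chosen arbitrary''. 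Your compensation remark pinpoints exactly why that step is not licensed: the quantities $\xi$ and $\eta$ fed into Lemma~\ref{lem:lemO} here depend on $\mu$ (through $y_I^{(2)}$, $y_I^{n+1}$, $\sigma_I$, $\pi_I$), whereas the lemma's proof subtracts the relations for two values of $\mu$ with $\xi,\eta$ held fixed. Your family makes the failure concrete: with $\pi_i=y_i^n/c$ and $\sigma_i=y_i^n+\tfrac{1+c}{2a_{21}c}\bigl(y_i^{(2)}-y_i^n\bigr)$ (for $a_{21}=1$, $c=2$ this is the admissible pair $\pi_i=y_i^n/2$, $\sigma_i=(y_i^n+3y_i^{(2)})/4>0$) one finds $\xi=(c-1)\mu\Delta t/2+\O(\Delta t^2)$ and $\eta=(c-1)\Delta t/2+\O(\Delta t^2)$, so $\xi-\mu\eta=\O(\Delta t^2)$ for every $\mu$ although neither term is $\O(\Delta t^2)$; this scheme is second order on every member of \eqref{eq:PDSorder} yet violates both conditions. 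Hence necessity genuinely cannot be extracted from the linear family alone, and your nonlinear probe is what closes it: on a single-reaction PDS $d_{IJ}=p_{JI}=f(y_I)$ your $\Delta t^2$-matching identity reduces to $\tfrac12 f'(y_I^n)\bigl(y_I^n/\pi_I^{(0)}-1\bigr)=\tau_I$, where $\pi_I^{(0)}=\lim_{\Delta t\to 0}\pi_I$; since an MPRK scheme evaluates the rates only at computed points, two such PDS with the same value but different derivative at $y_I^n$ (say $f(y)=\mu y$ and $f(y)=\mu y^2/y_I^n$, both positive and conservative) generate identical stage data and hence identical $\pi_I$, $\sigma_I$, $\tau_I$, which forces $\pi_I^{(0)}=y_I^n$ and then $\tau_I=0$, i.e.\ conditions \eqref{eq:MPRK22order1_neu} and \eqref{eq:MPRK22order2_neu}. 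In short: the paper's route is shorter but rests on an application of Lemma~\ref{lem:lemO} that your own compensating schemes refute, while your route costs the bookkeeping you flag (regularity of the expansions of $\mbfy^{(2)}$ and $\sigma_i$, and the $\delta=0$ case, all of which go through as in the sufficiency part) but actually proves the necessity claim.
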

\begin{proof}
We use the notation $\phi^*$ to represent $\phi(\mbfy^*)$ for a given function $\phi$.
Since $y_i'(t^n)=P_i^n-D_i^n$ implies 
\[y_i''(t^n)=\frac{\partial(P_i^n-D_i^n)}{\partial\mbfy}\mbfy'(t^n)=\frac{\partial(P_i^n-D_i^n)}{\partial\mbfy}(\mbfP^n-\mbfD^n),\]
with  $\mbfP=(P_1,\dots,P_N)^T$, $\mbfD=(D_1,\dots,D_N)^T$, it follows, that the exact solution at time $t^{n+1}$ can be written as
\begin{equation}\label{eq:MPRK22exact}
y_i(t^{n+1})=y_i^n+\Delta t(P_i^n-D_i^n)+\frac{\Delta t^2}{2} \frac{\partial(P_i^n-D_i^n)}{\partial\mbfy}(\mbfP^n-\mbfD^n)+\O(\Delta t^3)
\end{equation}
for $i=1,\dots,N$.

 First, we derive necessary conditions, which allow \eqref{eq:MPRK22} to become a second order scheme. To do so, we assume that \eqref{eq:MPRK22} is of second order, and consequently
 \begin{equation}\label{eq:MPRK22numexp}
 y_i^{n+1}=y_i^n+\Delta t(P_i^n-D_i^n)+\frac{\Delta t^2}{2} \frac{\partial(P_i^n-D_i^n)}{\partial\mbfy}(\mbfP^n-\mbfD^n)+\O(\Delta t^3)
 \end{equation}
 for $i=1,\dots,N$.
 Due to \eqref{eq:MPRK22exact} and \eqref{eq:MPRK22numexp} we see
 $y_i^{n+1}-y_i(t^{n+1})=\O(\Delta t^3)$, which, according to \eqref{eq:MPRK2cnp1} and \eqref{eq:MPRK22exact}, can be written in the form
 \begin{multline}\label{eq:MPRK22aux1}
  \Delta t\sum_{j=1}^N\biggl( (b_1 p_{ij}^n+b_2 p_{ij}^{(2)})\frac{y_j^{n+1}}{\sigma_j}
         - (b_1 d_{ij}^n+ b_2 d_{ij}^{(2)})\frac{y_i^{n+1}}{\sigma_i}\biggr)\\
-\Delta t(P_i^n-D_i^n)-\frac{\Delta t^2}{2} \frac{\partial(P_i^n-D_i^n)}{\partial\mbfy}(\mbfP^n-\mbfD^n)=\O(\Delta t^3)       
 \end{multline}
for $i=1,\dots,N$.
From now on, we focus on the solution of the PDS \eqref{eq:PDSorder}
with parameters $I,J\in\{1,\dots,N\}$, $I\ne J$ and $\mu>0$.
For this PDS, the equation \eqref{eq:MPRK22aux1} with $i=I$ becomes
 \begin{equation}\label{eq:MPRK22aux1b}
         - (b_1 \DPDS_I^n+ b_2 \DPDS_I^{(2)})\frac{y_I^{n+1}}{\sigma_I}
+\DPDS_I^n-\frac{\Delta t}{2} \frac{\partial \DPDS_I^n}{\partial y_I}\DPDS_I^n =\O(\Delta t^2).       
 \end{equation}
Since the PDS \eqref{eq:PDSorder} is linear, we have
 \begin{equation*}
\DPDS_{I}^{(2)}=\DPDS_{I}^n + \frac{\partial \DPDS_{I}^n}{\partial y_I}(y_I^{(2)}-y_I^n),
\end{equation*}
as derivatives of order two and higher vanish.
Substituting this into \eqref{eq:MPRK22aux1b} % for the PDS \eqref{eq:PDSorder}
yields
%  \begin{multline*}
%   \sum_{j=1}^N\biggl( \biggl(b_1 \pPDS_{ij}^n+b_2 \biggl(\pPDS_{ij}^n + \frac{\partial \pPDS_{ij}^n}{\partial \mbfy}(\mbfy^{(2)}-\mbfy^n)\biggr)\biggr)\frac{y_j^{n+1}}{\sigma_j}
%          \\- \biggl(b_1 \dPDS_{ij}^n+ b_2 \biggl(\dPDS_{ij}^n + \frac{\partial \dPDS_{ij}^n}{\partial \mbfy}(\mbfy^{(2)}-\mbfy^n)\biggr)\biggr)\frac{y_i^{n+1}}{\sigma_i}\biggr)\\
% -(\PPDS_i^n-\DPDS_i^n)-\frac{\Delta t}{2} \frac{\partial(\PPDS_i^n-\DPDS_i^n)}{\partial\mbfy}(\mathbf\PPDS^n-\mathbf\DPDS^n)=\O(\Delta t^2)      
%  \end{multline*}
%  and can be further combined as
%   \begin{multline}\label{eq:MPRK22aux2}
%   \sum_{j=1}^N\biggl( \pPDS_{ij}^n\biggl((\overbrace{b_1+b_2}^{=1})\frac{y_j^{n+1}}{\sigma_j}-1\biggr)-\dPDS_{ij}^n\biggl((\overbrace{b_1+b_2}^{=1})\frac{y_i^{n+1}}{\sigma_i}-1\biggr)\biggr)\\
%   + \sum_{j=1}^N\biggl(\frac{\partial\pPDS_{ij}^n}{\partial \mbfy}\biggl(b_2(\mbfy^{(2)}-\mbfy^n)\frac{y_j^{n+1}}{\sigma_j}-\frac{\Delta t}2 (\mathbf\PPDS^n-\mathbf\DPDS^n)\biggr)\\
%   -\frac{\partial\dPDS_{ij}^n}{\partial \mbfy}\biggl(b_2(\mbfy^{(2)}-\mbfy^n)\frac{y_i^{n+1}}{\sigma_i}-\frac{\Delta t}2 (\mathbf\PPDS^n-\mathbf\DPDS^n)\biggr)\biggr)
%  =\O(\Delta t^2),      
%  \end{multline}
% for $i=1,\dots,N$.
%  
% To see that the Patankar-weights are bounded, we choose the parameters in \eqref{eq:PDSorder} to be $I,J\in\{1,\dots,N\}$, $I\ne J$ and $\mu>0$.
% With these parameters \eqref{eq:MPRK22aux2} with $i=I$ becomes
  \begin{equation*}%\label{eq:MPRK22aux2}
   -\DPDS_{I}^n\biggl((\overbrace{b_1+b_2}^{=1})\frac{y_I^{n+1}}{\sigma_I}-1\biggr)
  -\frac{\partial\DPDS_{I}^n}{\partial y_I}\biggl(b_2(y_I^{(2)}-y_I^n)\frac{y_I^{n+1}}{\sigma_I}+\frac{\Delta t}2 \DPDS_{I}^n\biggr)
 = \O(\Delta t^2).     
 \end{equation*}
Insertion of \eqref{eq:MPRK2c2} and taking account of $a_{21}b_2=1/2$ results in
  \begin{equation*}%\label{eq:MPRK22aux2}
   -\DPDS_{I}^n\biggl(\frac{y_I^{n+1}}{\sigma_I}-1\biggr)
  -\underbrace{\frac{\partial\DPDS_{I}^n}{\partial y_I}}_{=\mu}\biggl(- \frac{\Delta t}2 \DPDS_{I}^n\frac{y_I^{(2)}}{\pi_I}\frac{y_I^{n+1}}{\sigma_I}+\frac{\Delta t}2 \DPDS_{I}^n\biggr)
 = \O(\Delta t^2),     
 \end{equation*}
irrespective of the value of $\delta$.
Owing to $\DPDS_I^n=\mu y_I^n>0$, this can be further simplified to
  \begin{equation*}%\label{eq:MPRK22aux2}
  \frac{y_I^{n+1}}{\sigma_I}-1 
  -\mu\frac{\Delta t}2\biggl(\frac{y_I^{(2)}}{\pi_I}\frac{y_I^{n+1}}{\sigma_I}-1\biggr)
 = \O(\Delta t^2).     
 \end{equation*}
Since $\mu>0$ was chosen arbitrary, we can conclude that the above equation holds for all $\mu>0$. 
Hence, by Lemma~\ref{lem:lemO} we can conclude
\begin{equation}\label{eq:MPRK22aux3}
 \frac{y_I^{n+1}}{\sigma_I} =1+ \O(\Delta t^2)
\end{equation}
and
\begin{equation}\label{eq:MPRK22aux4}
 \frac{y_I^{(2)}}{\pi_I}\frac{y_I^{n+1}}{\sigma_I}=1+\O(\Delta t).
\end{equation}
Since $y_I^{n+1}=y_I^n+\Delta t( \PPDS_I^n-\DPDS_I^n)+\O(\Delta t^2)$ owing to \eqref{eq:MPRK22numexp}, we find $\sigma_I\to y_I^n$ and
\begin{equation*}
 {\sigma_I}={y_I^n+\Delta t(\PPDS_I^n-\DPDS_I^n)}+\O(\Delta t^2)
\end{equation*}
from \eqref{eq:MPRK22aux3}. 
Inserting $y_I^{n+1}/\sigma_I\to 1$ in \eqref{eq:MPRK22aux4} shows 
\begin{equation}\label{eq:MPRK22aux13}
y_I^{(2)}/\pi_I\to 1
\end{equation}
as well.
Utilizing this in \eqref{eq:MPRK2c2} we find
\begin{equation}\label{eq:MPRK22aux6}
 y_I^{(2)}=y_I^n-a_{21}\Delta t \DPDS_{I}^n \underbrace{\frac{y_I^{(2)}}{\pi_I}}_{=\O(1)}=y_I^n+\O(\Delta t).
\end{equation}
and substituting this into \eqref{eq:MPRK22aux13} implies $\pi_I\to y_I^n$.
Altogether, we see
\begin{equation*}
 1+\O(\Delta t)\eq_{\eqref{eq:MPRK22aux3}}^{\eqref{eq:MPRK22aux4}}=\frac{y_I^{(2)}}{\pi_I}(1+\O(\Delta t^2))=\frac{y_I^{(2)}}{\pi_I}+\O(\Delta t^2)\eq^{\eqref{eq:MPRK22aux6}} \frac{y_I^n}{\pi_I}+\O(\Delta t).
\end{equation*}
and hence
\begin{equation*}
 {\pi_I}={y_I^n}+\O(\Delta t).
\end{equation*}
As $I\in\{1,\dots,N\}$ was chosen arbitrary, we can let it run from $1,\dots,N$ and find that \eqref{eq:MPRK22order1_neu} and \eqref{eq:MPRK22order2_neu} are indeed necessary conditions.

Next, we show that the conditions \eqref{eq:MPRK22order_neu} are already sufficient to obtain a second order scheme.
For the sake of clarity, we start considering MPRK schemes with $\delta=1$. 
The MPRK scheme \eqref{eq:MPRK22} can be written as two linear systems
\[
\mbfM^{(2)}\mbfy^{(2)}=\mbfy^n,\quad \mbfM\mbfy^{n+1}=\mbfy^{n},
\]
and, since we assume $\delta=1$, we know from Lemma~\ref{lem:mbound} that $\mbfM^{-1}=\O(1)$ and $(\mbfM^{(2)})^{-1}=\O(1)$.
Thus, we have $\mbfy^{(2)}=\O(1)$ and $\mbfy^{n+1}=\O(1)$ and conditions \eqref{eq:MPRK22order1_neu} and \eqref{eq:MPRK22order2_neu}
lead to
\begin{equation}\label{eq:MPRK22aux7}
 \frac{y_i^{(2)}}{\pi_i}=\O(1)
\end{equation}
and 
\begin{equation}\label{eq:MPRK22aux8}
 \frac{y_i^{n+1}}{\sigma_i}=\O(1)
\end{equation}
for $i=1,\dots,N$, since $y_i^n>0$.
The boundedness of the Patankar-weights \eqref{eq:MPRK22aux7} shows that \eqref{eq:MPRK2c2} yields
\begin{equation}\label{eq:MPRK22aux9}
y_i^{(2)}=y_i^n + a_{21}\Delta t\sum_{j=1}^N\biggl( p_{ij}^n\frac{y_j^{(2)}}{\pi_j}-d_{ij}^n\frac{y_i^{(2)}}{\pi_i}\biggr)=y_i^n+\O(\Delta t)
\end{equation}
for $i=1,\dots,N$.
Inserting this and \eqref{eq:MPRK22order1_neu} into \eqref{eq:MPRK2c2} shows
\begin{equation*}
 y_i^{(2)}=y_i^n+a_{21}\Delta t\sum_{j=1}^N\left(p_{ij}^n(1+\O(\Delta t))-d_{ij}^n(1+\O(\Delta t))\right)
\end{equation*}
and further
\begin{equation}\label{eq:MPRK22aux12}
 y_i^{(2)}=y_i^n+a_{21}\Delta t(P_i^n-D_i^n)+\O(\Delta t^2)
\end{equation}
for $i=1,\dots,N$.
Now we compute an expansion of $y_i^{n+1}$ using \eqref{eq:MPRK2cnp1}. Since $\mbfy^{(2)}-\mbfy^n=\O(\Delta t)$ according to \eqref{eq:MPRK22aux9} we get
 \begin{multline}\label{eq:MPRK22aux11}
  y_i^{n+1}=y_i^n+\Delta t\sum_{j=1}^N \biggl(\biggl(b_1 p_{ij}^n +b_2\biggl( p_{ij}^{n}+\frac{\partial p_{ij}^n}{\partial \mbfy}(\underbrace{\mbfy^{(2)}-\mbfy^n}_{=\O(\Delta t)})+\O(\Delta t^2)\biggr)\biggr)\frac{y_j^{n+1}}{\sigma_j}\biggr.\\
  -\biggl(b_1 d_{ij}^n +b_2\biggl( d_{ij}^{n}+\frac{\partial d_{ij}^n}{\partial \mbfy}(\underbrace{\mbfy^{(2)}-\mbfy^n}_{=\O(\Delta t)})+\O(\Delta t^2)\biggr)\biggr)\frac{y_i^{n+1}}{\sigma_i}\biggl.\biggr)
 \end{multline}
 and with \eqref{eq:MPRK22aux8} we can conclude
 \begin{equation}\label{eq:MPRK22aux10}
  y_i^{n+1}=y_i^n+\O(\Delta t)
 \end{equation}
 for $i=1,\dots,N$.
 From \eqref{eq:MPRK22order2_neu} and \eqref{eq:MPRK22aux10} it follows that
 \begin{equation}\label{eq:MPRK22aux14}
  \frac{y_i^{n+1}}{\sigma_i}=1+\O(\Delta t),
 \end{equation}
which can be utilized together with $b_1+b_2=1$ in \eqref{eq:MPRK22aux11} to obtain
\begin{equation*}
 y_i^{n+1}=y_i^n+\Delta t(P_i^n-D_i^n)+\O(\Delta t^2)
\end{equation*}
for $i=1,\dots,N$.
Due to this expression, we can tighten \eqref{eq:MPRK22aux14} in the form
\begin{equation*}
 \frac{y_i^{n+1}}{\sigma_i}=1+\O(\Delta t^2)
\end{equation*}
and 
inserting this together with \eqref{eq:MPRK22aux12} and $b_2 a_{21}=1/2$ into \eqref{eq:MPRK22aux11} shows
\begin{equation*}
 y_i^{n+1}=y_i^n+\Delta t(P_i^n-D_i^n)+\frac{\Delta t^2}{2}\frac{\partial(P_i^n-D_i^n)}{\mbfy}(\mbfP^n-\mbfD^n)+\O(\Delta t^3)
\end{equation*}
for $i=1,\dots,N$.
A comparison with \eqref{eq:MPRK22exact} shows $y_i^{n+1}=y(t_i^{n+1})+\O(\Delta t^3)$ for $i=1,\dots,N$.
Thus, the MPRK scheme \eqref{eq:MPRK22} is second order accurate, if $\delta=1$.

If $\delta=0$ instead, only $y_i^{(2)}$ is affected by this change for $i=1,\dots,N$ and we need to show that \eqref{eq:MPRK22aux12} remains valid.
In this case, \eqref{eq:MPRK2c2} can be rewritten in the form
\[
y_i^{(2)}=\frac{y_i^n+a_{21}\Delta t P_i^n}{1+a_{21}\Delta t D_i^n/\pi_i}
\]
and thus,
\[
\frac{y_i^{(2)}}{\pi_i}=\frac{y_i^n+a_{21}\Delta t P_i^n}{\pi_i+a_{21}\Delta t D_i^n}\eq^{\eqref{eq:MPRK22order1_neu}}\frac{y_i^n+\O(\Delta t)}{y_i^n+\O(\Delta t)}=1+\O(\Delta t),
\]
for $i=1,\dots,N$, since $y_i^n>0$.
Inserting this into \eqref{eq:MPRK2c2} shows                                                                                                                  
\begin{equation*}                                                                                                              
y_i^{(2)}=y_i^n+a_{21}\Delta t(P_i^n-D_i^n)+\O(\Delta t^2).
\end{equation*}
Thus, \eqref{eq:MPRK22aux12} holds for $\delta=0$ as well and conditions \eqref{eq:MPRK22order_neu} suffice to make \eqref{eq:MPRK22} a second order accurate scheme irrespective of the value of $\delta$.
\end{proof}

One conclusion we can draw from Theorem~\ref{thm:MPRK22}, is that the choice of PWDs used in \cite{BDM2003} results in a second order scheme if and only if $a_{21}=1$, see Theorem~\ref{thm:MPRK22org}.
For other values of $a_{21}$ the PWDs must be chosen differently and  
Theorem~\ref{thm:MPRK22b} introduces one specific choice of PWDs that can be used with general second order explicit two-stage Runge-Kutta schemes, which have non-negative Runge-Kutta parameters.
\begin{thm}\label{thm:MPRK22org}
 Assuming an underlying second order Runge-Kutta method, the MPRK scheme \eqref{eq:MPRK22} with PWDs $\pi_i=y_i^n$ and $\sigma_i=y_i^{(2)}$ for $i=1,\dots,N$ is second order accurate, if and only if $a_{21}=1$.
\end{thm}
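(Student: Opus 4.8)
The plan is to read the result straight off Theorem~\ref{thm:MPRK22}. That theorem already characterises second order accuracy of the two-stage scheme \eqref{eq:MPRK22} by the two conditions \eqref{eq:MPRK22order1_neu} and \eqref{eq:MPRK22order2_neu} on the PWDs, so the whole task reduces to testing the specific choice $\pi_i=y_i^n$, $\sigma_i=y_i^{(2)}$ against those conditions. First I would dispose of \eqref{eq:MPRK22order1_neu}: with $\pi_i=y_i^n$ it holds trivially, with zero remainder, for every $i$ and every $a_{21}$, so it contributes no constraint.

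The heart of the matter is condition \eqref{eq:MPRK22order2_neu}, which now reads $y_i^{(2)}=y_i^n+\Delta t(P_i^n-D_i^n)+\O(\Delta t^2)$. For this I need an expansion of the stage value $y_i^{(2)}$. The derivation already performed inside the proof of Theorem~\ref{thm:MPRK22} uses only $\pi_i=y_i^n+\O(\Delta t)$ — which is satisfied here — and is valid irrespective of $\delta$, so I may invoke \eqref{eq:MPRK22aux12} to write
\[
\sigma_i = y_i^{(2)} = y_i^n + a_{21}\Delta t\,(P_i^n-D_i^n)+\O(\Delta t^2),\quad i=1,\dots,N.
\]
Substituting into \eqref{eq:MPRK22order2_neu} collapses it to the single requirement
\[
(a_{21}-1)(P_i^n-D_i^n)=\O(\Delta t),\quad i=1,\dots,N.
\]

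To finish I would argue both implications. If $a_{21}=1$ the left-hand side vanishes identically, so \eqref{eq:MPRK22order2_neu} holds and the scheme is second order by Theorem~\ref{thm:MPRK22}. Conversely, because $P_i^n-D_i^n$ is a fixed function of $\mbfy^n$ that does not depend on $\Delta t$, the product $(a_{21}-1)(P_i^n-D_i^n)$ is $\O(\Delta t)$ only if it is zero; exhibiting any PDS with $P_i^n-D_i^n\ne0$ — for instance a member of the family \eqref{eq:PDSorder}, for which $\DPDS_I^n=\mu y_I^n>0$ — then forces $a_{21}=1$.

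The only genuine obstacle is a bookkeeping one: I must be certain that the expansion \eqref{eq:MPRK22aux12} is legitimate in this setting, that is, that it was obtained from condition \eqref{eq:MPRK22order1_neu} alone and not from the full second-order hypothesis. Inspecting its derivation confirms this, and also that it holds for both $\delta=0$ and $\delta=1$. The remaining care is logical rather than computational: the necessity of $a_{21}=1$ rests on reading ``second order'' as holding for every positive PDS, which is precisely what licenses the choice of a test problem with nonvanishing $P_i^n-D_i^n$.
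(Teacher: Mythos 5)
Your proposal is correct and takes essentially the same route as the paper: both reduce the claim to the characterisation in Theorem~\ref{thm:MPRK22}, note that \eqref{eq:MPRK22order1_neu} is trivially met, and invoke the stage expansion \eqref{eq:MPRK22aux12} (valid for both $\delta=0$ and $\delta=1$, and derived from \eqref{eq:MPRK22order1_neu} alone) so that $\sigma_i=y_i^{(2)}=y_i^n+a_{21}\Delta t(P_i^n-D_i^n)+\O(\Delta t^2)$ can be compared against \eqref{eq:MPRK22order2_neu}. The only difference is that the paper compresses the final comparison into one sentence, whereas you make the necessity direction explicit by exhibiting a PDS with $P_i^n-D_i^n\ne 0$ --- a worthwhile elaboration, but the same argument.
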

\begin{proof}
Obviously \eqref{eq:MPRK22order1_neu} is satisfied and due to \eqref{eq:MPRK22aux12} from Theorem~\ref{thm:MPRK22} we know
\begin{equation*}
\sigma_i={y_i^{(2)}}={y_i^n+a_{21}\Delta t(P_i^n-D_i^n)}+\O(\Delta t^2)
\end{equation*}
for $i=1,\dots,N$, irrespective of the value of $\delta$.
A comparison with condition \eqref{eq:MPRK22order2_neu} proves the statement.
\end{proof}
The following theorem shows how the ideas of \cite{BDM2003} can be generalized to obtain second order MPRK schemes for appropriate parameters $a_{21}\ne1$.
\begin{thm}\label{thm:MPRK22b}
Assuming an underlying second order Runge-Kutta scheme, the MPRK scheme \eqref{eq:MPRK22} with PWDs
\begin{equation}\label{eq:MPRK22bPW}
\pi_i=y_i^n,\quad\sigma_i=y_i^n\left(\frac{y_i^{(2)}}{y_i^n}\right)^{\!\!1/a_{21}},\quad i=1,\dots,N,
\end{equation}
is second order accurate.
\end{thm}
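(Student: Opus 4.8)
The plan is to reduce the statement to the necessary and sufficient conditions \eqref{eq:MPRK22order_neu} established in Theorem~\ref{thm:MPRK22}. First I would confirm that \eqref{eq:MPRK22} equipped with the PWDs \eqref{eq:MPRK22bPW} is a genuine MPRK scheme in the sense of Definition~\ref{def:MPRKdefn}: since $y_i^n>0$ and $y_i^{(2)}>0$ by Lemma~\ref{lem:MPRKpos}, and $a_{21}>0$ (because $a_{21}b_2=\tfrac12$ forces $a_{21}\ne0$), the real power $(y_i^{(2)}/y_i^n)^{1/a_{21}}$ is well defined and positive, so both $\pi_i$ and $\sigma_i$ are unconditionally positive; moreover $\pi_i=y_i^n$ is independent of $y_i^{(2)}$ and $\sigma_i$ depends only on stage values, hence is independent of $y_i^{n+1}$. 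With this in place, it suffices to verify \eqref{eq:MPRK22order1_neu} and \eqref{eq:MPRK22order2_neu}.

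The condition on $\pi_i$ is immediate, since $\pi_i=y_i^n$ already gives $\pi_i=y_i^n+\O(\Delta t)$. The substantive step is \eqref{eq:MPRK22order2_neu}. Here I would invoke the stage expansion \eqref{eq:MPRK22aux12} from the proof of Theorem~\ref{thm:MPRK22}, namely $y_i^{(2)}=y_i^n+a_{21}\Delta t(P_i^n-D_i^n)+\O(\Delta t^2)$, which holds for both values of $\delta$. Dividing by $y_i^n>0$ and applying the binomial expansion $(1+x)^{1/a_{21}}=1+x/a_{21}+\O(x^2)$ with $x=\O(\Delta t)$ gives
\[
\left(\frac{y_i^{(2)}}{y_i^n}\right)^{\!\!1/a_{21}}=1+\Delta t\,\frac{P_i^n-D_i^n}{y_i^n}+\O(\Delta t^2),
\]
and multiplying by $y_i^n$ yields exactly $\sigma_i=y_i^n+\Delta t(P_i^n-D_i^n)+\O(\Delta t^2)$, which is \eqref{eq:MPRK22order2_neu}.

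There is no real obstacle beyond this one-line expansion; the only point worth emphasizing is that the exponent $1/a_{21}$ is chosen precisely so as to cancel the factor $a_{21}$ in the leading correction of $y_i^{(2)}$, converting the first-order consistency of the stage value into the exact second-order requirement on $\sigma_i$. Once both parts of \eqref{eq:MPRK22order_neu} are confirmed, Theorem~\ref{thm:MPRK22} gives second order accuracy irrespective of $\delta$, completing the argument.
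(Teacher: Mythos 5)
Your proposal is correct and follows essentially the same route as the paper's proof: both verify positivity of the PWDs via Lemma~\ref{lem:MPRKpos}, note that \eqref{eq:MPRK22order1_neu} holds trivially, and then use the stage expansion \eqref{eq:MPRK22aux12} together with a binomial expansion in which the exponent $1/a_{21}$ cancels the factor $a_{21}$, so that Theorem~\ref{thm:MPRK22} applies. The only cosmetic difference is that the paper expands $\bigl(y_i^{(2)}\bigr)^{s}$ directly with $s=1/a_{21}$, whereas you normalize by $y_i^n$ first and expand $(1+x)^{1/a_{21}}$; these are the same computation.
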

\begin{proof}
First of all, $\pi_i>0$ and $\sigma_i>0$ for $i=1,\dots,N$, since $y_i^n>0$ and $y_i^{(2)}>0$ for $i=1,\dots,N$ according to Lemma~\ref{lem:MPRKpos}.
Hence, \eqref{eq:MPRK22} with Patankar-weights \eqref{eq:MPRK22bPW} is indeed a MPRK scheme.

Now, we investigate the order of the scheme. 
Obviously \eqref{eq:MPRK22order1_neu} holds and
from \eqref{eq:MPRK22aux12} of Theorem~\ref{thm:MPRK22} we know
$y_i^{(2)}=y_i^n+a_{21}\Delta t(P_i^n-D_i^n)+\O(\Delta t^2)$ for $i=1,\dots,N$.
Thus, 
 \[\Bigl(y_i^{(2)}\Bigr)^{\!s}=\left(y_i^n\right)^s+s a_{21}\Delta t(P_i^n-D_i^n)\left(y_i^n\right)^{s-1}+\O(\Delta t^2),
 \]
which implies
\begin{equation*}
 \frac{\Bigl(y_i^{(2)}\Bigr)^{\!s}}{\left(y_i^n\right)^{s-1}}=y_i^n+s a_{21}\Delta t(P_i^n-D_i^n)+\O(\Delta t^2)
\end{equation*}
for $i=1,\dots,N$.
With $s=1/a_{21}$ we see $\sigma_i=y_i^n+\Delta t(P_i^n-D_i^n)+\O(\Delta t^2)$.
Hence, condition \eqref{eq:MPRK22order2_neu} is also satisfied.
 \end{proof}
Assuming $\alpha\ne 0$, a general explicit two-stage Runge-Kutta scheme of second order is given by the Butcher tableau
\[
\begin{array}{c|cc}
 0        &                    & \\
 \alpha & \alpha            & \\\hline
          & 1-\frac1{2\alpha} & \frac1{2\alpha}
\end{array},
\]
see \cite{Butcher2008}.
To make the scheme \eqref{eq:MPRK22} a MPRK scheme, we also have to ensure non-negativity of the Runge-Kutta parameters $a_{21}=\alpha$, $b_1=1-1/(2\alpha)$, and $b_2=1/(2\alpha)$.
Thus, we have to restrict $\alpha$ to $\alpha\geq 1/2$.
Prominent examples are Heun's method ($\alpha = 1$), Ralston's method ($\alpha=2/3$), and the midpoint method ($\alpha=\frac12$).

For $\alpha \geq 1/2$ Theorem~\ref{thm:MPRK22b} introduces a one parameter family of second order two-stage MPRK schemes.
\begin{subequations}\label{eq:MPRK22b}
\begin{align}
  &\begin{aligned}
    \mathllap{y_i^{(1)}} &= y_i^n,
  \end{aligned}\\ 
  &\begin{aligned}
    \mathllap{y_i^{(2)}} &= y_i^n + \alpha\Delta t\sum_{j=1}^N\biggl( p_{ij}(\mbfy^{(1)})(1-\delta)+p_{ij}(\mbfy^{(1)})\frac{y_j^{(2)}}{y_j^{(1)}}\delta-d_{ij}(\mbfy^{(1)})\frac{y_i^{(2)}}{y_i^{(1)}}\biggr),
  \end{aligned}\\ 
  &\begin{multlined}[b][.7\columnwidth]
    \mathllap{y_i^{n+1}} = y_i^n + \Delta t\sum_{j=1}^N\biggl( \left(\left(1-\frac1{2\alpha}\right) p_{ij}(\mbfy^{(1)})+\frac1{2\alpha} p_{ij}(\mbfy^{(2)})\right)\frac{y_j^{n+1}}{(y_j^{(2)})^{1/\alpha}(y_j^{(1)})^{1-1/\alpha}}\\
         - \left(\left(1-\frac1{2\alpha}\right) d_{ij}(\mbfy^{(1)})- \frac1{2\alpha} d_{ij}(\mbfy^{(2)})\right)\frac{y_i^{n+1}}{(y_i^{(2)})^{1/\alpha}(y_i^{(1)})^{1-1/\alpha}}\biggr),
  \end{multlined}  
\end{align}
\end{subequations}
for $i=1,\dots,N$.
In the following, we will refer to this family of schemes as MPRK22($\alpha$) schemes if $\delta=1$ and MPRK22ncs($\alpha$) if $\delta=0$.

Numerical experiments which confirm the theoretical convergence order and also compare the truncation errors of MPRK22($\alpha$) and MPRK22ncs($\alpha$) are presented in Section~\ref{sec:numres}.
The second order MPRK scheme introduced in \cite{BDM2003} is equivalent to MPRK22(1).

The PWDs \eqref{eq:MPRK22bPW} are not the only possible choices. Of course, many other second order MPRK schemes can be devised.
In particular, we can use convex combinations of terms like $y_i^n(y_i^{(2)}/y_i^n)^s$ to find other second order MPRK schemes.
For instance, the PWDs 
\[\pi_i=y_i^n,\quad \sigma_i=\omega y_i^n\left(\frac{y_i^{(2)}}{y_i^n}\right)^{s_1}+ (1-\omega) y_i^n\left(\frac{y_i^{(2)}}{y_i^n}\right)^{s_2},\quad i=1,\dots,N,\]
with $0\leq \omega\leq 1$, result in a two-parameter family of second order schemes, if
\[s_2 = \frac{a_{21}\omega s_1-1}{a_{21}(\omega-1)}.\]
In our numerical experiments in Section~\ref{sec:numres} we only consider the MPRK22($\alpha$) and MPRK22ncs($\alpha$) schemes, as these only contain a single  free parameter.
%%%%%%%%%%%%%%%%%%%%%%%%%%%%%%%%%%%%%%%%%%%%%%%%%%%%%%%%%%%%%%%%%%%%%%%%%%%%%%%%%%%%%%%%%%%%%%%%%%%%%%%%%%%%%%%%%
\section{Test problems}\label{sec:testcases}
For our numerical experiments, we consider the same three test cases as in \cite{BDM2003}.
A simple linear test problem for which the analytical solution is known, a non-stiff nonlinear test problem and the stiff Robertson problem.
Additionally, we apply the MPRK schemes to the original Brusselator problem \cite{LefeverNicolis1971}, which was used in \cite{BonaventuraDellaRocca2016} to demonstrate the workload efficiency of the MPRK22(1) scheme.

\subsection*{Linear test problem}%\label{sec:lintest}
The simple linear test case is given by
\begin{equation}\label{eq:lintest}
 y_1'(t) = y_2(t) - a y_1(t),\quad y_2'(t) = a y_1(t) - y_2(t),
\end{equation}
with a constant parameter $a$ and initial values $y_1(0) = y_1^0$ and $y_2(0)=y_2^0$. 
We can write the right hand side in the form \eqref{eq:pijdij} with
\begin{align*}
p_{12}(\mbfy) &= y_2, & p_{21}(\mbfy) &= a y_1,\\
d_{12}(\mbfy) &= a y_1, & d_{21}(\mbfy) &= y_2,
\end{align*}
and $p_{ii}(\mbfy)=d_{ii}(\mbfy)=0$ for $i=1,2$.
The system describes exchange of mass between to constituents. The analytical solution is
\[
y_1(t) = (1+c\exp(-(a+1)t))y_1^\infty
\]
with the asymptotic solution
\[
y_1^\infty = \frac{y_1^0+y_2^0}{a+1},\quad c = \frac{y_1^0}{y_1^\infty}-1.
\]
The system is conservative and we get
\[
y_2(t) = y_1^0+y_2^0 - y_1(t).
\]
In the numerical simulations of Section~\ref{sec:numres} we use $a=5$ and initial values $y_1^0=0.9$ and $y_2^0=0.1$.
The solution is approximated on the time interval $[0,1.75]$.
\subsection*{Nonlinear test problem}
The non-stiff nonlinear test problem reads
\begin{equation}\label{eq:nonlintest}
 \begin{split}
  y_1'(t) &= -\frac{y_1(t)y_2(t)}{y_1(t)+1},\\
  y_2'(t) &= \frac{y_1(t)y_2(t)}{y_1(t)+1}-a y_2(t),\\
  y_3'(t) &= a y_2(t),
 \end{split}
\end{equation}
with initial conditions $y_i(0)=y_i^0$ for $i=1,2,3$.
To express the right hand side in the form \eqref{eq:pijdij} we can use
\begin{align*}
 p_{21}(\mbfy)=d_{21}(\mbfy)= \frac{y_1y_2}{y_1+1},\quad p_{32}(\mbfy)=d_{23}(\mbfy)=a y_2,
\end{align*}
and $p_{ij}(\mbfy)=d_{ij}(\mbfy)=0$ for all other combinations of $i$ and $j$.

The system represents a biogeochemical model for the description of an algal bloom, that transforms nutrients ($y_1$) via phytoplankton ($y_2$) into detritus ($y_3$).
In the numerical simulations of Section~\ref{sec:numres} we use the initial conditions $y_1^0=9.98$, $y_2^0=0.01$ and $y_3^0=0.01$.
The solution is approximated on the time interval $[0,30]$.
\subsection*{Original Brusselator test problem}
As another non-stiff nonlinear test case we consider the original Brusselator problem \cite{LefeverNicolis1971, HNW1993}
\begin{equation}\label{eq:brusselator}
 \begin{split}
  y_1'(t) &= -k_1 y_1(t),\\
  y_2'(t) &= -k_2 y_2(t) y_5(t),\\
  y_3'(t) &= k_2 y_2(t) y_5(t),\\
  y_4'(t) &= k_4 y_5(t),\\
  y_5'(t) &= k_1 y_1(t) - k_2 y_2(t) y_5(t) +k_3 y_5(t)^2y_6(t)-k_4 y_5(t),\\
  y_6'(t) &= k_2 y_2(t) y_5(t) - k_3 y_5(t)^2 y_6(t),
 \end{split}
\end{equation}
with constant parameters $k_i$ and initial values $y_i(0)=y_i^0$ for $i=1,\dots,6$.
The system can be written in the form \eqref{eq:pijdij}, setting
\begin{align*}
 p_{32}(\mbfy) &=d_{23}(\mbfy)= k_2 y_2 y_5, & p_{45}(\mbfy)&=d_{54}(\mbfy)=k_4 y_5, & p_{51}(\mbfy)&=d_{15}(\mbfy)=k_1 y_1,\\
 p_{56}(\mbfy)&=d_{65}(\mbfy)=k_3 y_5^2 y_6, & p_{65}(\mbfy)&=d_{56}(\mbfy)=k_2 y_2 y_5,
\end{align*}
and $p_{ij}(\mbfy)=d_{ji}(\mbfy)=0$ for all other combinations of $i$ and $j$.

In the numerical simulations of Section~\ref{sec:numres} we set $k_i=1$ for $i=1,\dots,6$ and the initial values $y_1(0)=y_2(0)=10$, $y_3(0)=y_4(0)=\mathtt
{eps}\approx2.2204\cdot10^{-16}$, and $y_5(0)=y_6(0)=0.1$.
The time interval of interest is $[0,10]$.
\subsection*{Robertson test problem}
To demonstrate the practicability of MPRK schemes in the case of stiff systems, we apply the schemes to the Robertson test case, which is given by
\begin{equation}\label{eq:robertson}
 \begin{split}
  y_1'(t) &= 10^4 y_2(t)y_3(t) - 0.04 y_1(t),\\
  y_2'(t) & = 0.04 y_1(t) - 10^4 y_2(t) y_3(t) - 3\cdot 10^7 y_2(t)^2,\\
  y_3'(t) &= 3\cdot 10^7 y_2(t)^2,
 \end{split}
\end{equation}
with initial values $y_i(0)=y_i^0$ for $i=1,2,3$.
For this problem the production and destruction rates \eqref{eq:pijdij} are given by
\[p_{12}(\mbfy)=d_{21}(\mbfy)=10^4y_2y_3,\quad p_{21}(\mbfy)=d_{12}(\mbfy)=0.04y_1,\quad p_{32}(\mbfy)=d_{23}(\mbfy)=3\cdot10^7 y_2,\]
and $p_{ij}(\mbfy)=d_{ij}(\mbfy)=0$ for all other combinations of $i$ and $j$.

We use the initial values $y_1(0)=y_1^0=1-2 \mathtt
{eps}$ and $y_2^0=y_3^0=\mathtt
{eps}\approx2.2204\cdot10^{-16}$ in the numerical simulations of Section~\ref{sec:numres}.

In this problem the reactions take place on very different time scales, the time interval of interest is $[10^{-6},10^{10}]$. Therefore, a constant time step size is not appropriate.
In the numerical simulations we use $\Delta t_i = 2^{i-1}\Delta t_0$ with $\Delta t_0=10^{-6}$ in the $i$th time step.
The small initial time step size $\Delta t_0$ is chosen to obtain an adequate resolution of $y_2$.
%%%%%%%%%%%%%%%%%%%%%%%%%%%%%%%%%%%%%%%%%%%%%%%%%%%%%%%%%%%%%%%%%%%%%%%%%%%%%%%%%%%%%%%%%%%%%%%%%%%%%%%%%%%%%%%%%
\section{Numerical results}\label{sec:numres}
In this section, we confirm the theoretical convergence order of the MPRK schemes, that we introduced in the preceding sections. 
We compare MPRK22 to MPRK22ncs schemes and investigate the influence of the parameter $\alpha$ on the truncation error of these schemes.
We also show approximations of MPRK22 and MPRK22ncs schemes applied to the stiff Robertson problem. 

To visualize the order of the MPRK schemes we use a relative error $E$ taken over all time steps and all constituents:
\begin{equation*}
 E = \frac{1}{N}\sum_{i=1}^N E_i,\quad E_i = \Bigl(\frac{1}{M}\sum_{m=1}^M y_i(t^m)\Bigr)^{\!\!-1}\Bigl(\frac{1}{M}\sum_{m=1}^M\left(y_i(t^m) - y_i^m\right)^2\Bigr)^{\!\!1/2},
\end{equation*}
where $M$ denotes the number of executed time steps. 
To compute the error $E$ we need to know the analytic solution, which is known for the linear test case, but not for the other test problems.
Hence, we computed a reference solution, using the \textsc{Matlab} functions \texttt{ode45} for the non-stiff problems and \texttt{ode23s} for the Robertson problem. 
In both cases we utilized the tolerances $\mathtt{AbsTol}=\mathtt{RelTol}=10^{-10}$.
\subsection*{Convergence order}
\begin{figure}[htb]
\begin{subfigure}{.49\textwidth}
\centering
\includegraphics[width=\textwidth]{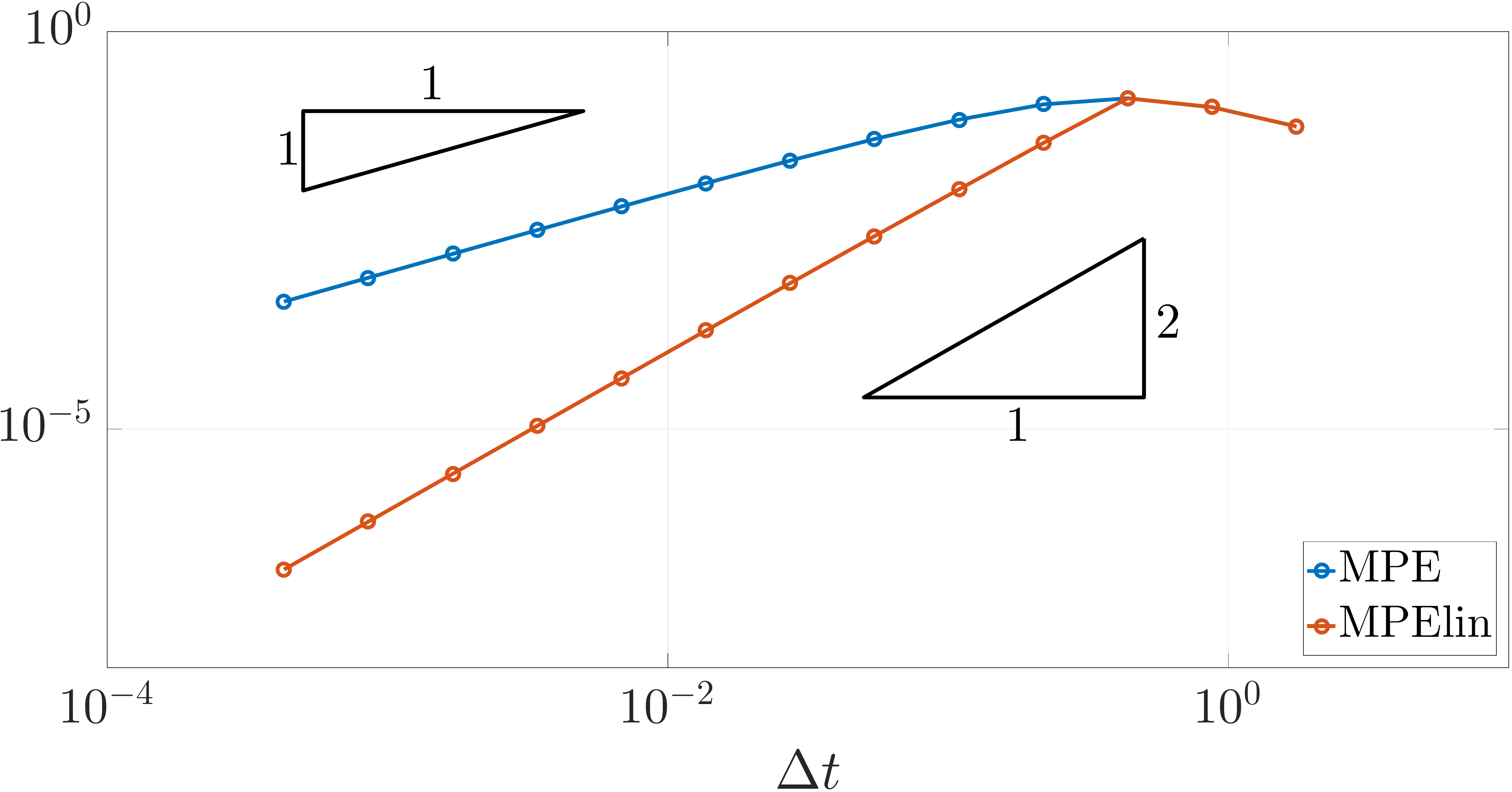}
\caption{MPE and MPElin.}
\label{fig:testlinorder1}
\end{subfigure}
\hfill
\begin{subfigure}{.49\textwidth}
\centering
\includegraphics[width=\textwidth]{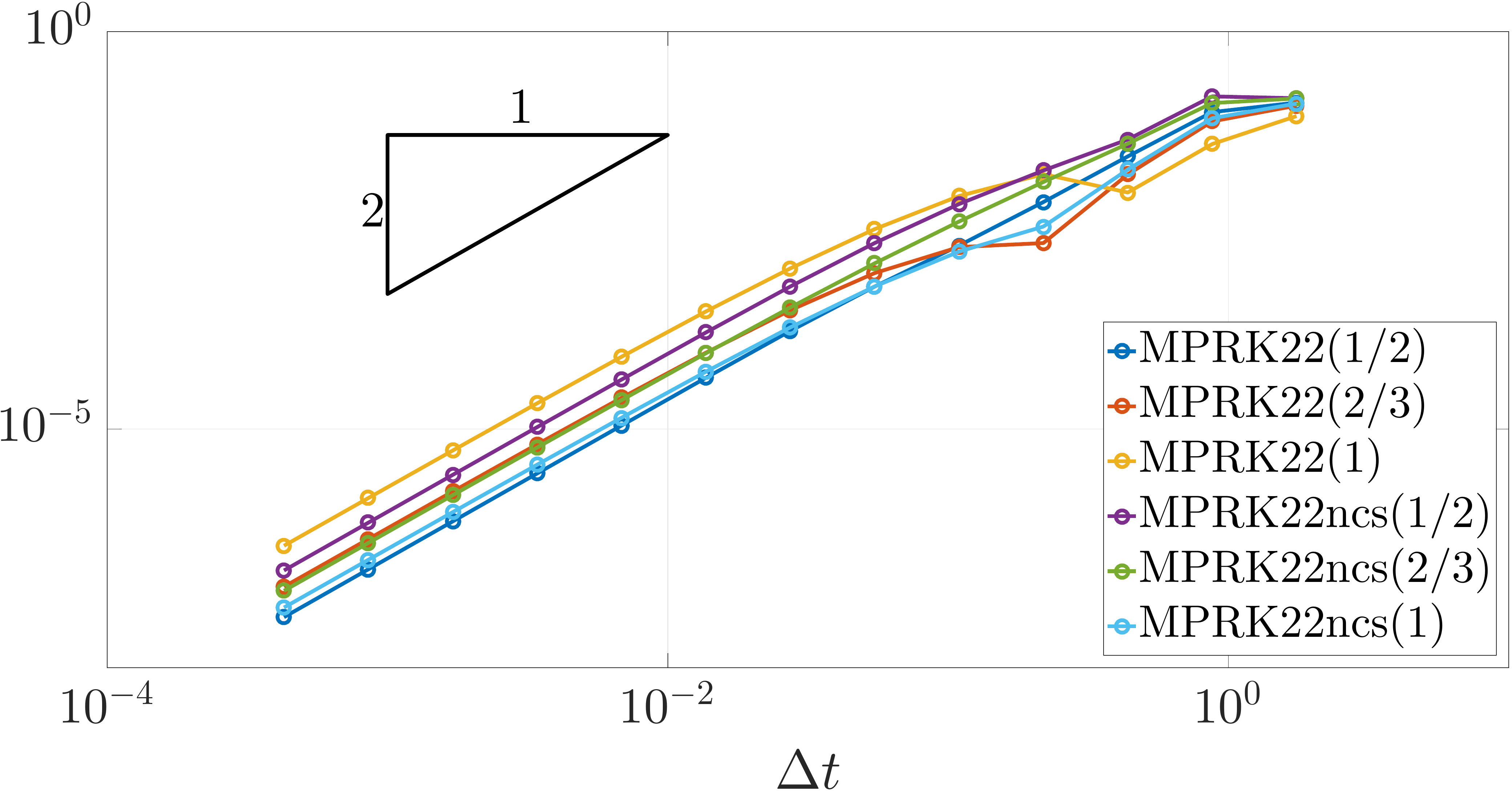}
\caption{MPRK22 and MPRK22ncs schemes.}
\label{fig:testlinorder2}
\end{subfigure}
\caption{Error plots of MPRK schemes solving the linear test problem \eqref{eq:lintest}.}
\label{fig:testlinorder}
\end{figure}

\begin{figure}[hbt]
\begin{subfigure}{.49\textwidth}
\centering
\includegraphics[width=\textwidth]{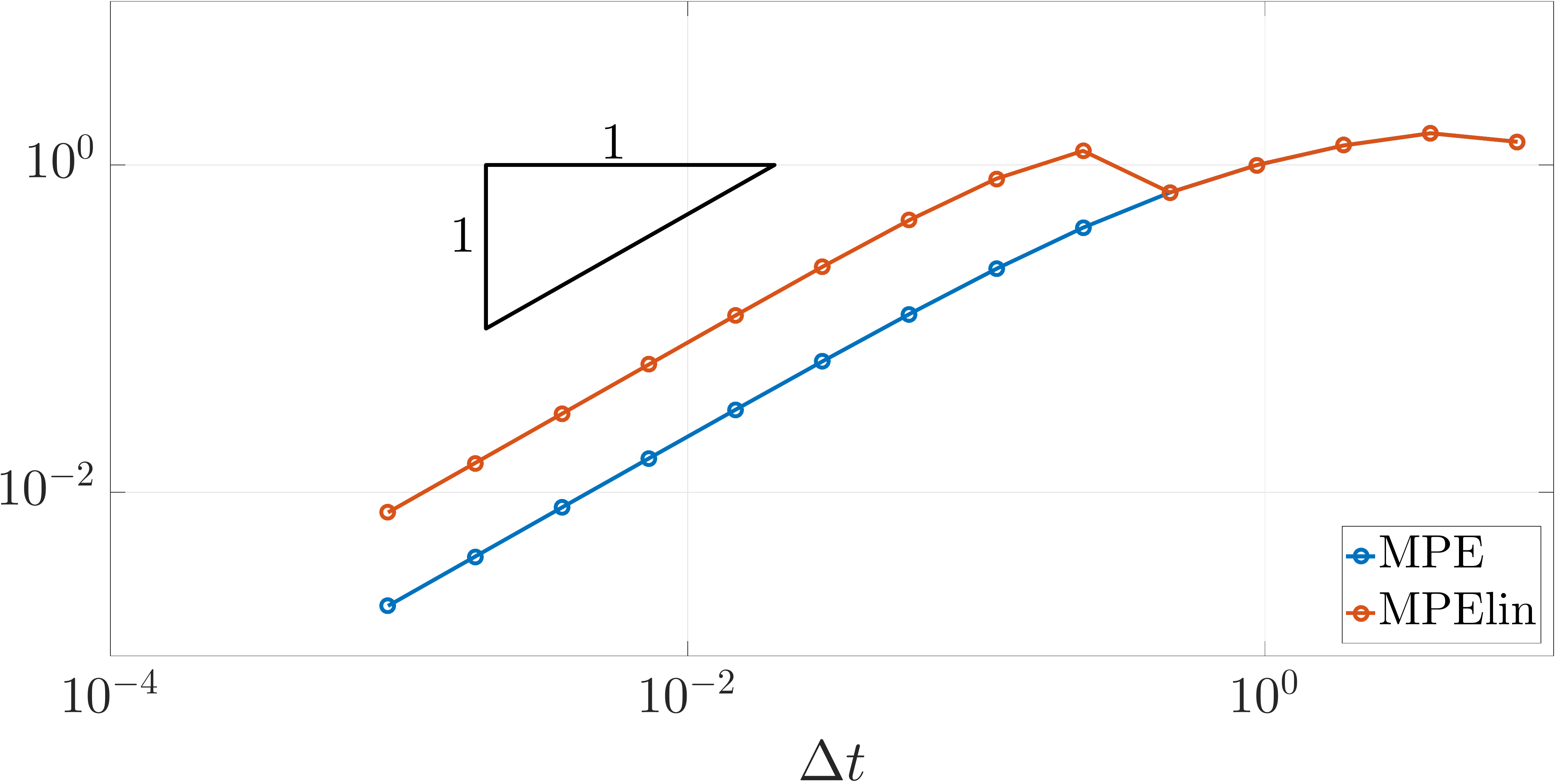}
\caption{MPE and MPElin.}
\label{fig:testnonlinorder1}
\end{subfigure}
\hfill
\begin{subfigure}{.49\textwidth}
\centering
\includegraphics[width=\textwidth]{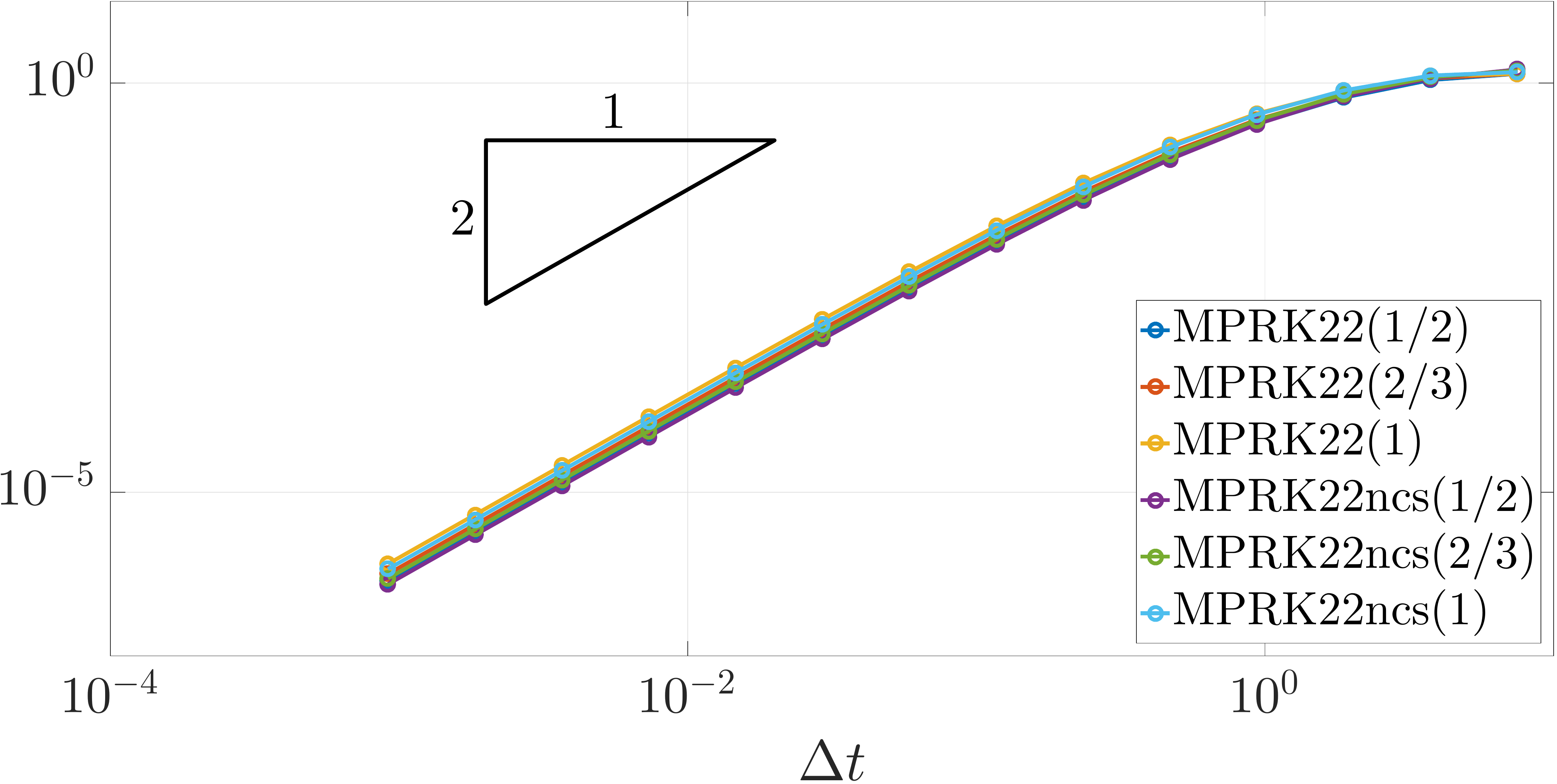}
\caption{MPRK22 and MPRK22ncs schemes.}
\label{fig:testnonlinorder2}
\end{subfigure}
\caption{Error plots of MPRK schemes solving the nonlinear test problem \eqref{eq:nonlintest}.}
\label{fig:testnonlinorder}
\end{figure}
Figure~\ref{fig:testlinorder} shows error plots of eight MPRK schemes applied to the linear test problem \eqref{eq:lintest}.
Figure~\ref{fig:testlinorder1} confirms that the MPE method \eqref{eq:MPE} is first order accurate and that the MPElin scheme \eqref{eq:MPRK11}, \eqref{eq:sigMPElin}, which was designed to be of second order, when applied to the linear test problem, shows the expected order of accuracy.
Owing to \eqref{eq:sigMPElin}, MPElin and MPE generate equal approximations as long as $\Delta t\geq 1/3$.
Figure~\ref{fig:testlinorder2} verifies the second order accuracy of MPRK22($\alpha$) and MPRK22ncs($\alpha$) for $\alpha\in\{1/2,2/3,1\}$.
These are the MPRK schemes corresponding to Heun's method ($\alpha=1$), the midpoint method ($\alpha=1/2$) and Ralston's method ($\alpha=2/3$). 
In addition, Figure~\ref{fig:testnonlinorder} shows error plots of the same schemes, when applied to the nonlinear test problem \eqref{eq:nonlintest}.
Again, we find the second order convergence of the MPRK22 and MPRK22ncs schemes, as well as the first order convergence of the MPE scheme. 
When applied to a problem other than \eqref{eq:lintest}, MPElin is only a first order scheme, which becomes evident in Figure~\ref{fig:testnonlinorder1}.

\subsection*{Truncation error}
Figure \ref{fig:testlinorder2} enables a comparison of  MPRK22 and MPRK22ncs for a fixed value of $\alpha$. 
One might expect MPRK22ncs($\alpha$) to be more accurate than MPRK22($\alpha$), since less weighting disturbs the original Runge-Kutta scheme.
But we see that MPRK22(1) is less accurate than MPRK22ncs(1) and MPRK22(1/2) is more accurate than MPRK22ncs(1/2) in the case of the linear test problem. 
Hence, one cannot make a general statement, if MPRK22($\alpha$) or MPRK22ncs($\alpha$) is more accurate.

\begin{figure}
\centering
\begin{subfigure}{.49\textwidth}
 \includegraphics[width=\textwidth]{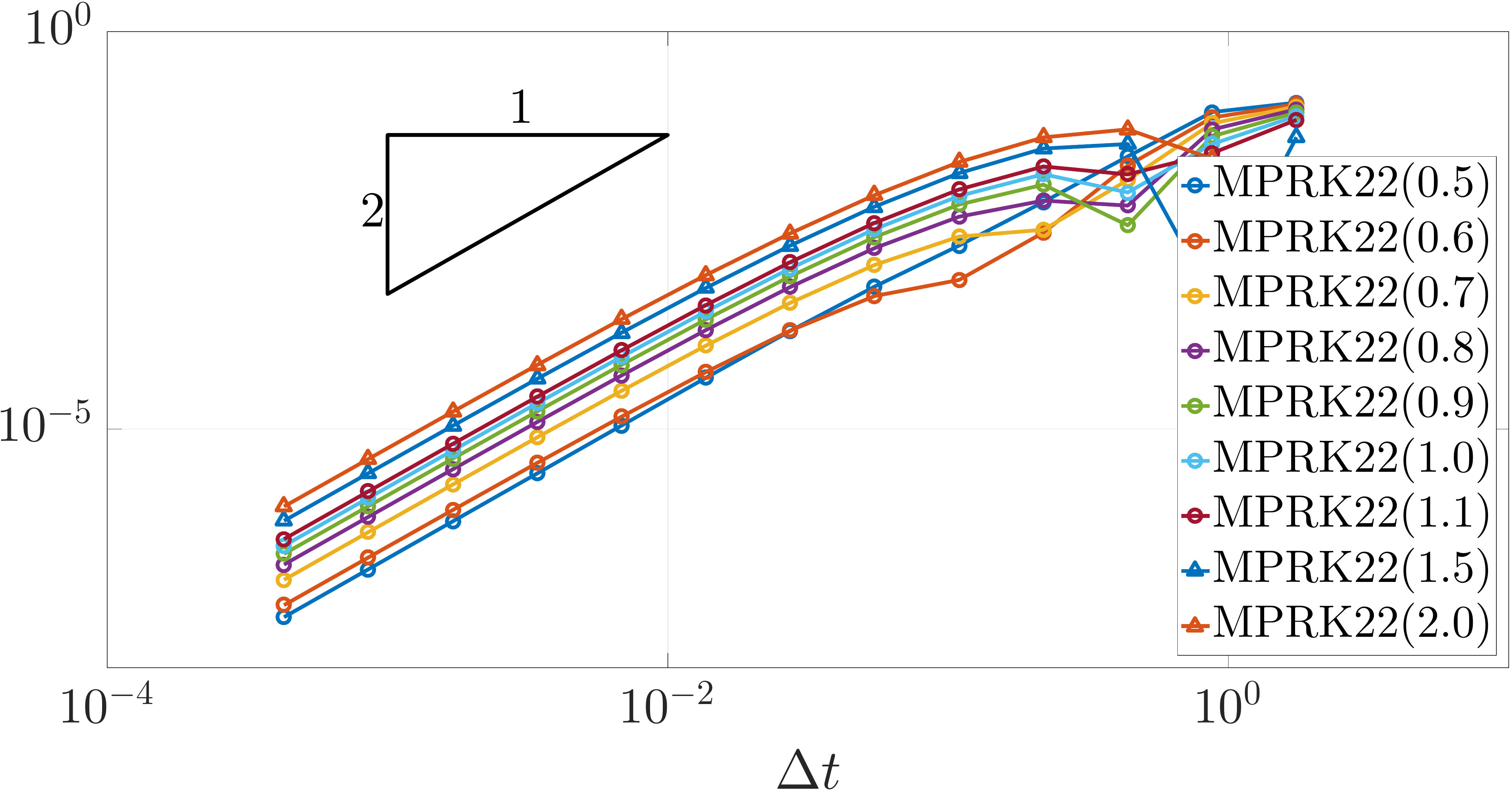}
 \caption{Linear test problem \eqref{eq:lintest}.}
\end{subfigure}
\hfill
\begin{subfigure}{.49\textwidth}
  \includegraphics[width=\textwidth]{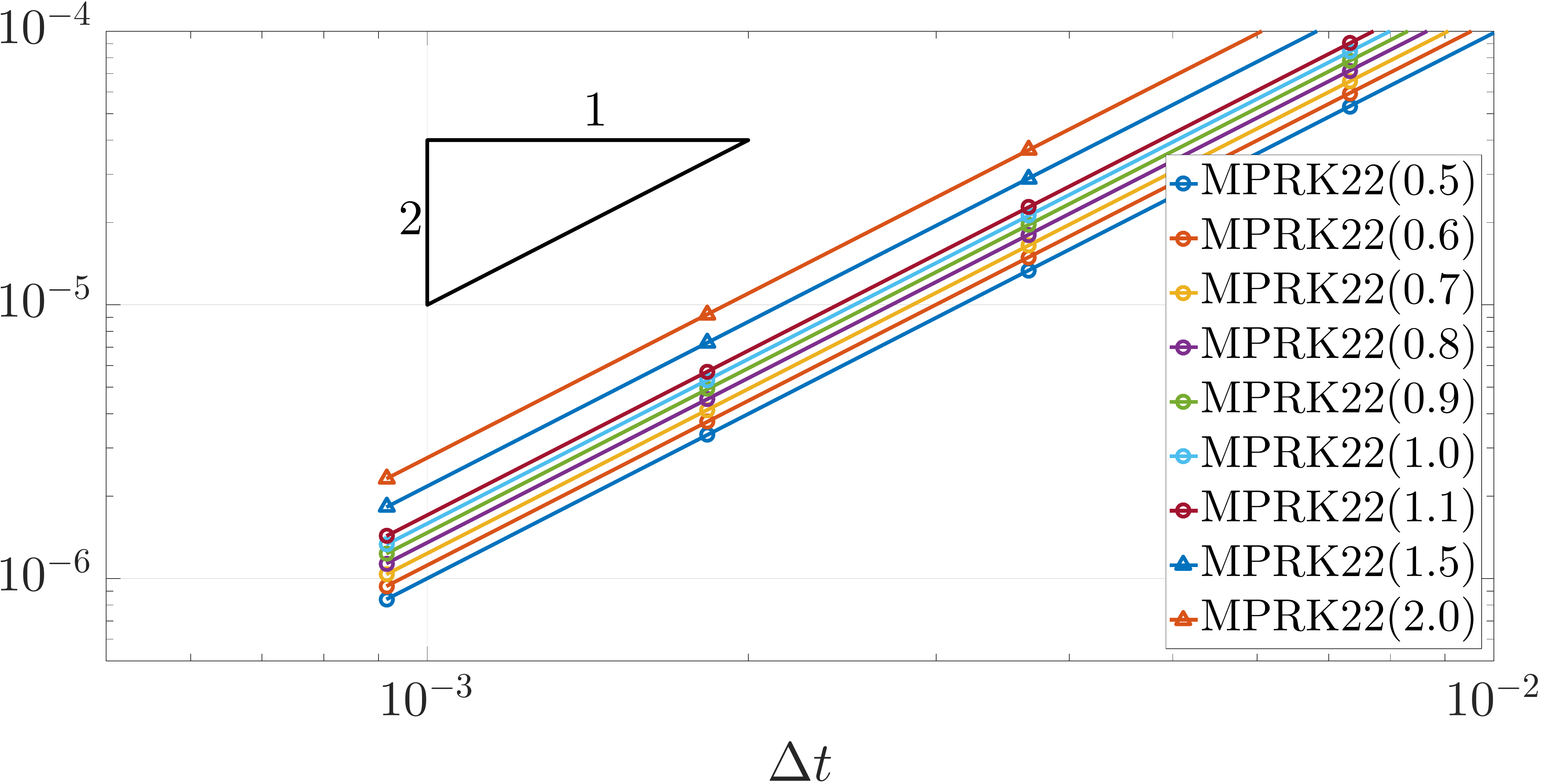}
  \caption{Nonlinear test problem \eqref{eq:nonlintest}.}
\end{subfigure}
\par
\vspace{2\baselineskip}
\begin{subfigure}{.49\textwidth}
  \includegraphics[width=\textwidth]{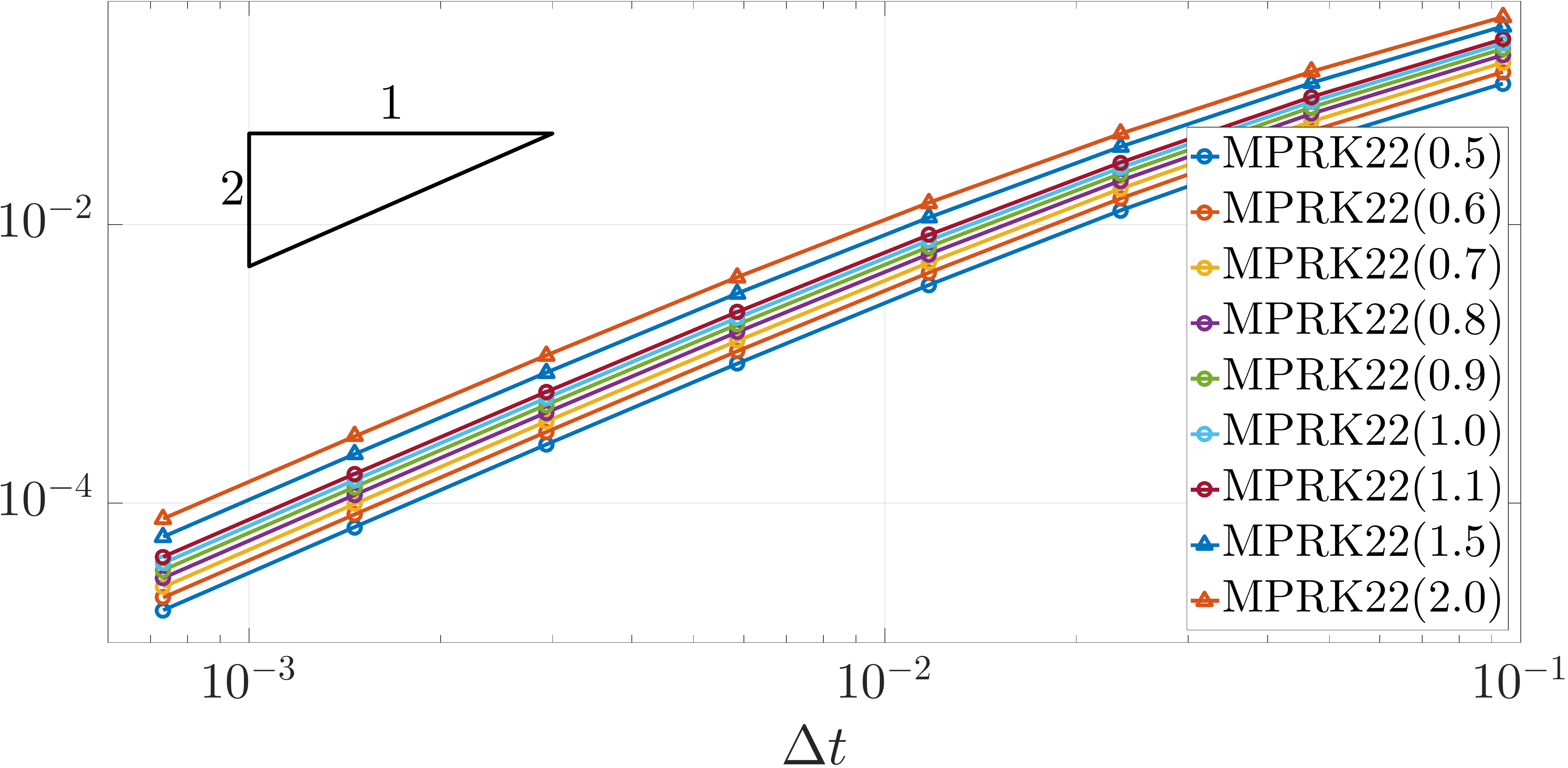}
  \caption{Brusselator problem \eqref{eq:brusselator}.}
\end{subfigure}
\caption{Error plots of MPRK22 schemes for various values of $\alpha$ applied to different test problems.}
\label{fig:ErrorMPRK22}
\end{figure}

Figure~\ref{fig:ErrorMPRK22} shows error plots of nine MPRK22 schemes applied to the linear test problem \eqref{eq:lintest}, the nonlinear test problem \eqref{eq:nonlintest} and the Brusselator \eqref{eq:brusselator}.
The parameter $\alpha$ takes the values $\alpha=0.5,0.6,\dots,1.0,1.5,2.0$.
In all three cases, we see that MPRK22(1/2) generates the most accurate approximations and that the error seems to increase monotonically with the value of $\alpha$. 
This property is not shared by the MPRK22ncs and the explicit Runge-Kutta schemes.
Therefore, an analytical investigation of the truncation errors of the MPRK22 schemes is of high interest, to reveal if this is merely coincidental, due to similar properties of the test problems or a general rule.

\subsection*{Stiff problems and stability}
Figure~\ref{fig:robertson} shows numerical approximations of eight MPRK22 and MPRK22ncs schemes applied to the stiff Robertson problem \eqref{eq:robertson}. As mentioned, the time step size in the $k$th time step was chosen as $\Delta t_k=2^{k-1}\Delta t_0$ with initial time step size $\Delta t_0=10^{-6}$.
Hence, only 55 time steps are necessary to traverse the time interval $[10^{-6},10^{10}]$.
The small initial time step was chosen to obtain an adequate resolution of the component $y_2$ in the starting phase. 
To visualize the evolution of $y_2$, it was multiplied by $10^4$.

The MPRK22ncs schemes fail to produce adequate approximations (right column), when $\alpha$ is close to $1/2$.
The oscillations become less, as the value of $\alpha$ increases, and for $\alpha=1$ no oscillations can be observed (Figure~\ref{fig:robertsonMPRK22ncs1}).
When applied to solve the nonlinear test problem \eqref{eq:nonlintest} or the Brusselator \eqref{eq:brusselator} no oscillations are visible, see Figures~\ref{fig:nonlin} and \ref{fig:brusselator}.

In absence of a stability analysis of MPRK schemes, we can only speculate what causes these oscillations.
Therefore, such a stability analysis is vitally important and will be a major research topic in the future.

Nevertheless, we can hardly distinguish the MPRK22 approximations from the reference solution (left column), which shows the excellent accuracy of MPRK22 schemes even in the case of a highly stiff problem.

\begin{figure}[htb]
\centering
\begin{subfigure}{.49\textwidth}
\centering
 \includegraphics[width=\textwidth]{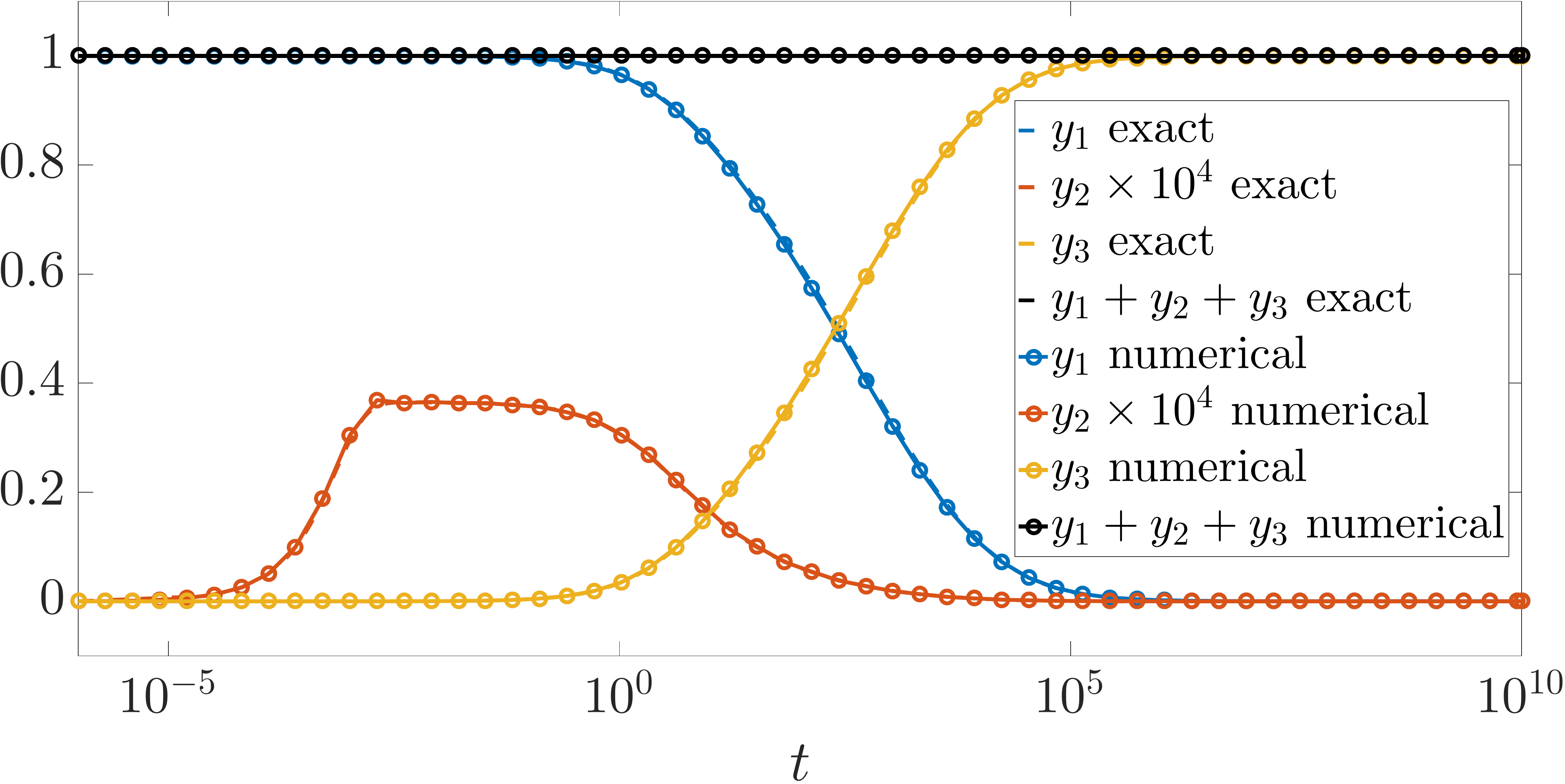}
 \caption{MPRK22($1/2$)}
 \end{subfigure}
 \hfill
\begin{subfigure}{.49\textwidth}
\centering
 \includegraphics[width=\textwidth]{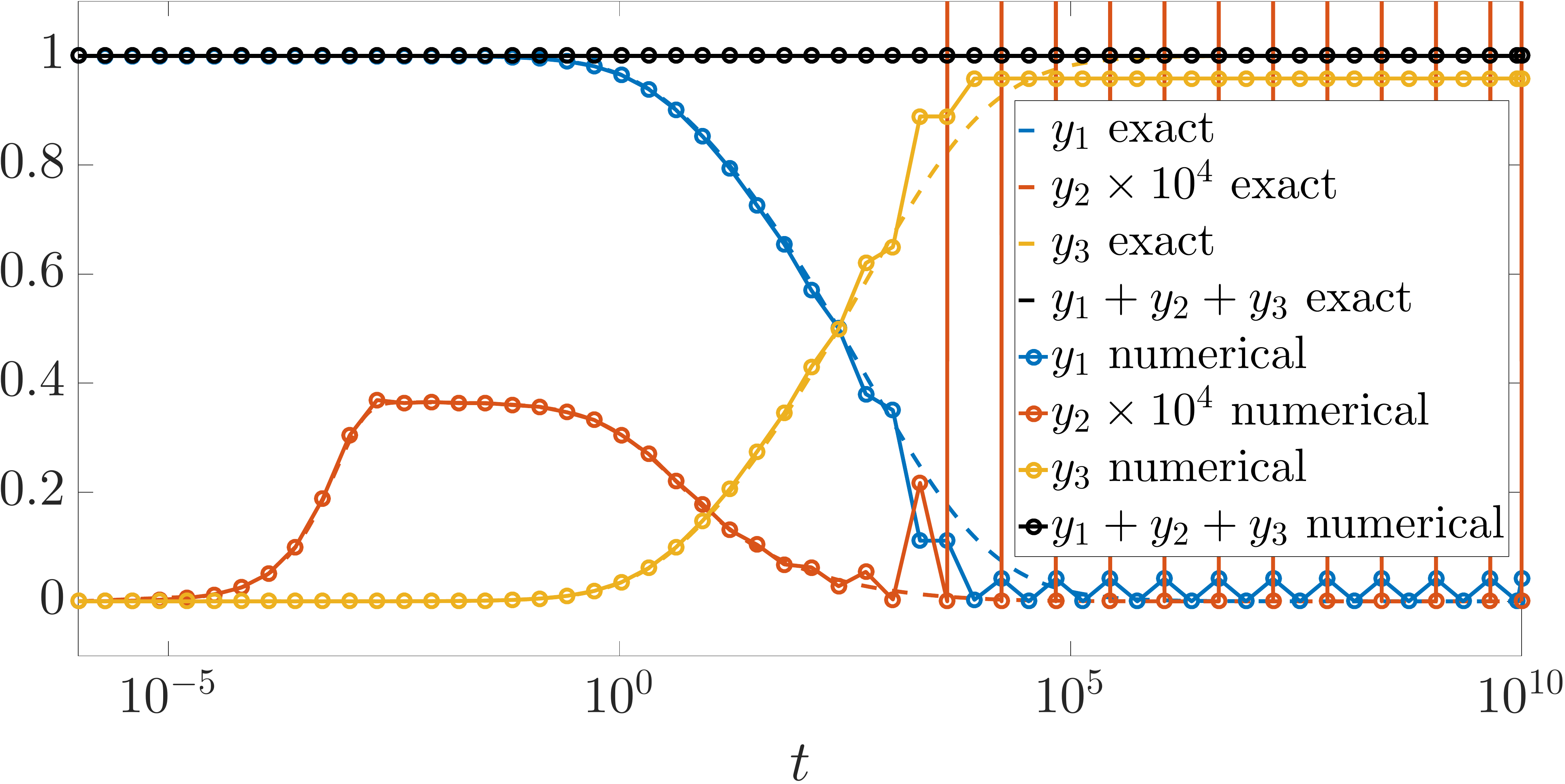}
 \caption{MPRK22ncs($1/2$)}
 \end{subfigure} 
 \par
\vspace{2\baselineskip} 
 \begin{subfigure}{.49\textwidth}
\centering
 \includegraphics[width=\textwidth]{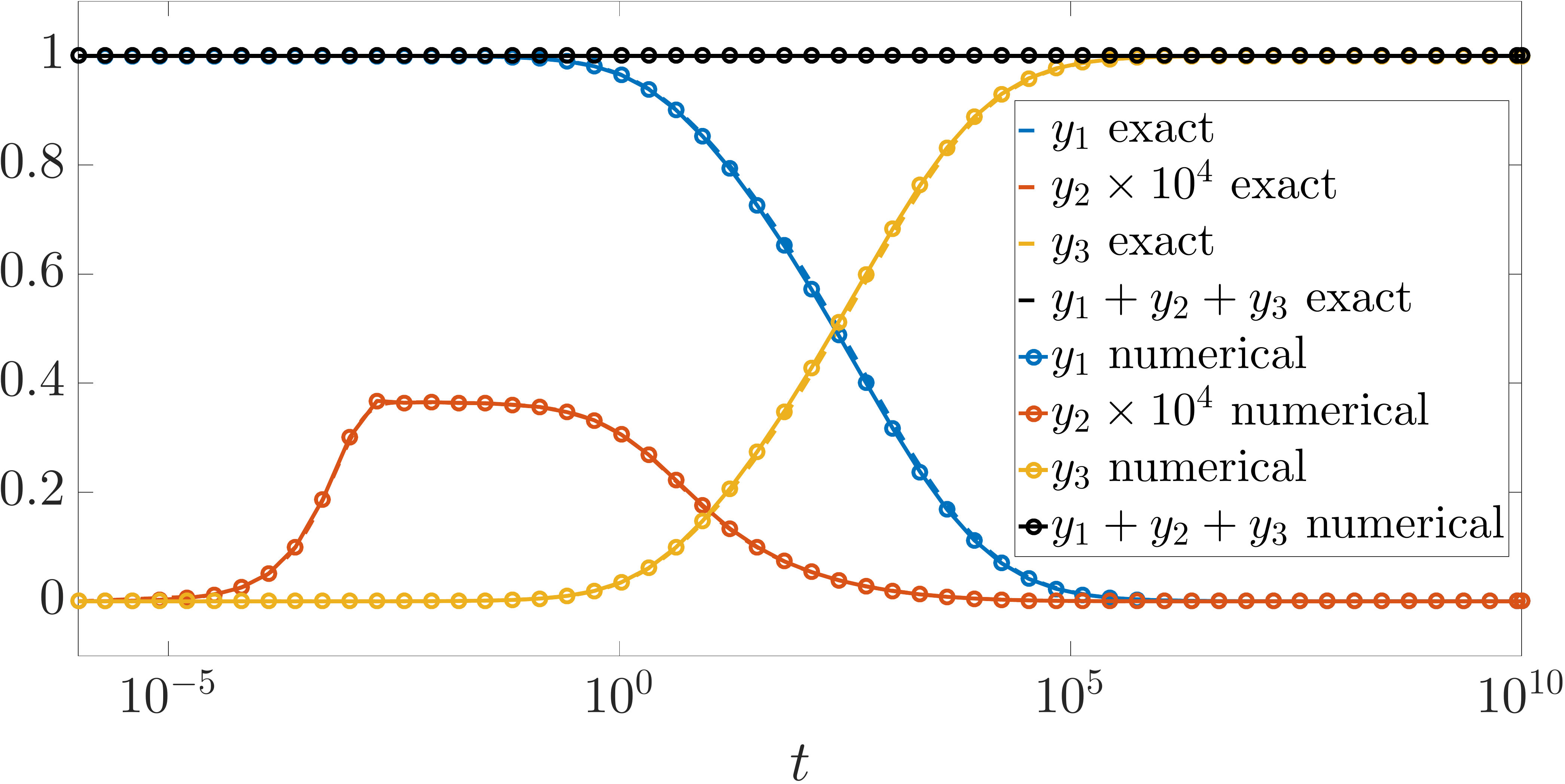}
 \caption{MPRK22($3/5$)}
 \end{subfigure}
\hfill
 \begin{subfigure}{.49\textwidth}
\centering
 \includegraphics[width=\textwidth]{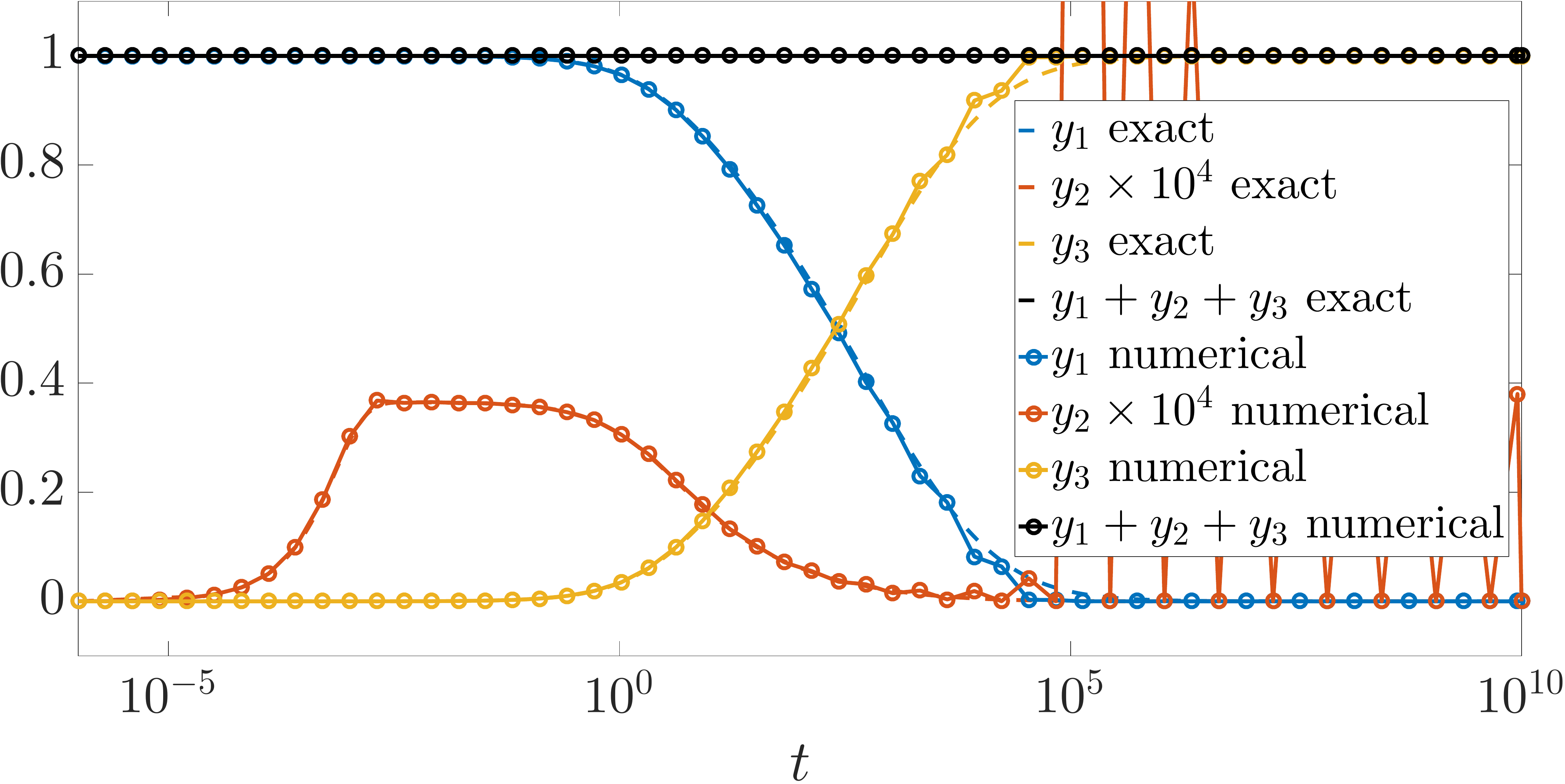}
 \caption{MPRK22ncs($3/5$)}
 \end{subfigure}
 \par\vspace{2\baselineskip} 
 \begin{subfigure}{.49\textwidth}
\centering
 \includegraphics[width=\textwidth]{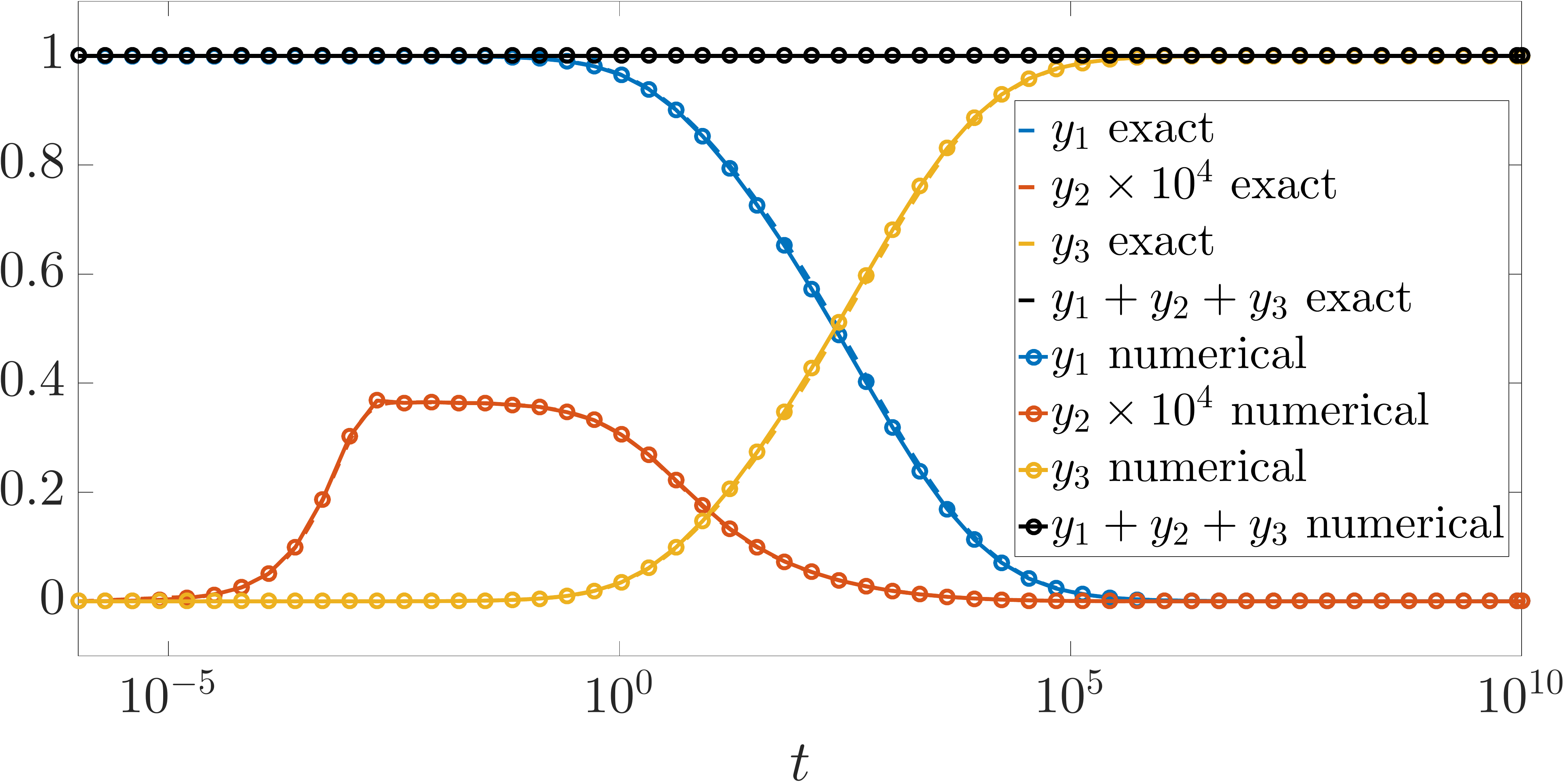}
 \caption{MPRK22($2/3$)}
 \end{subfigure}
\hfill
 \begin{subfigure}{.49\textwidth}
\centering
 \includegraphics[width=\textwidth]{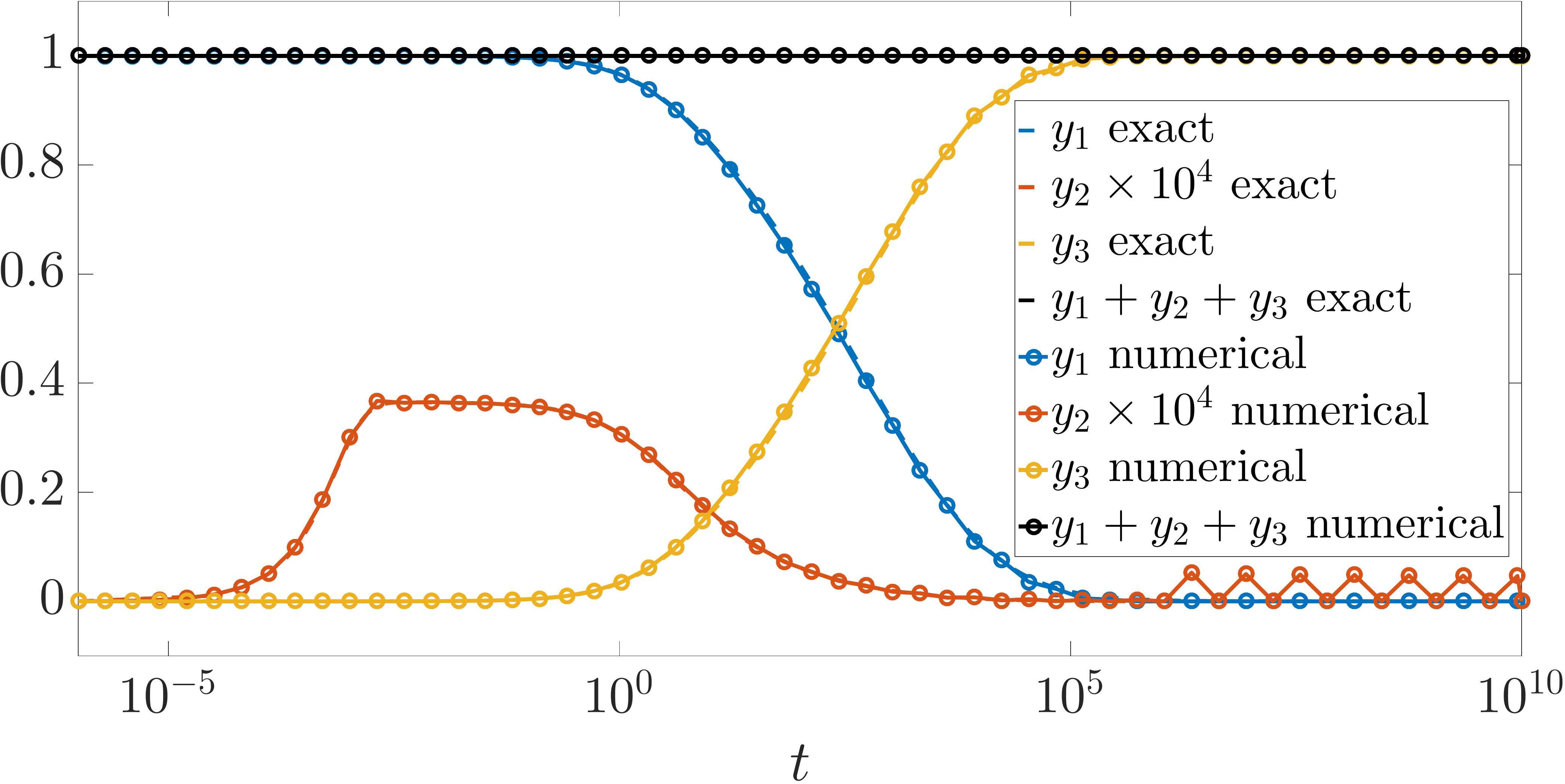}
 \caption{MPRK22ncs($2/3$)}
 \end{subfigure}
 \par
 \vspace{2\baselineskip}
  \begin{subfigure}{.49\textwidth}
\centering
 \includegraphics[width=\textwidth]{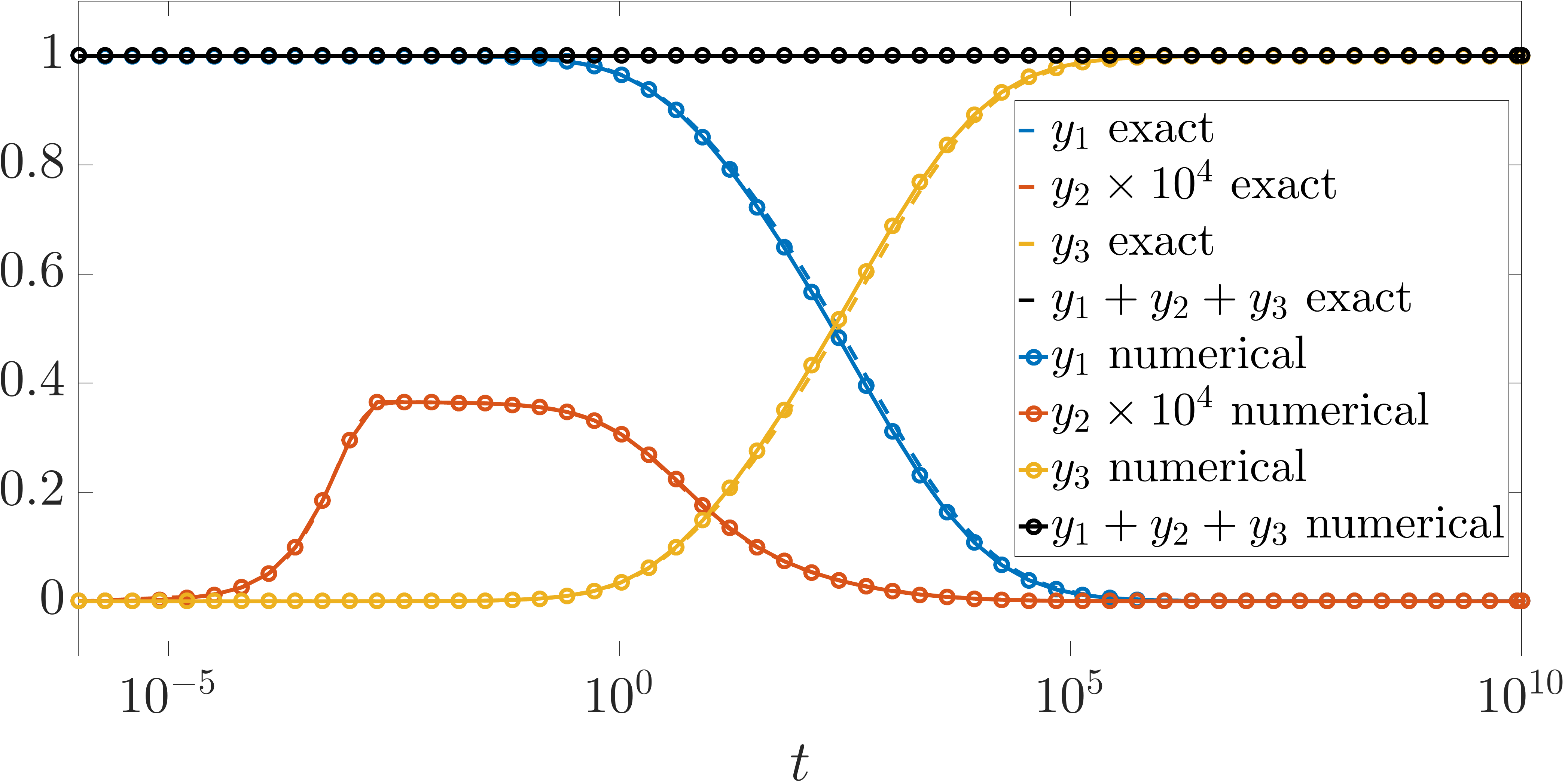}
 \caption{MPRK22(1)}
 \end{subfigure}
 \hfill
  \begin{subfigure}{.49\textwidth}
\centering
 \includegraphics[width=\textwidth]{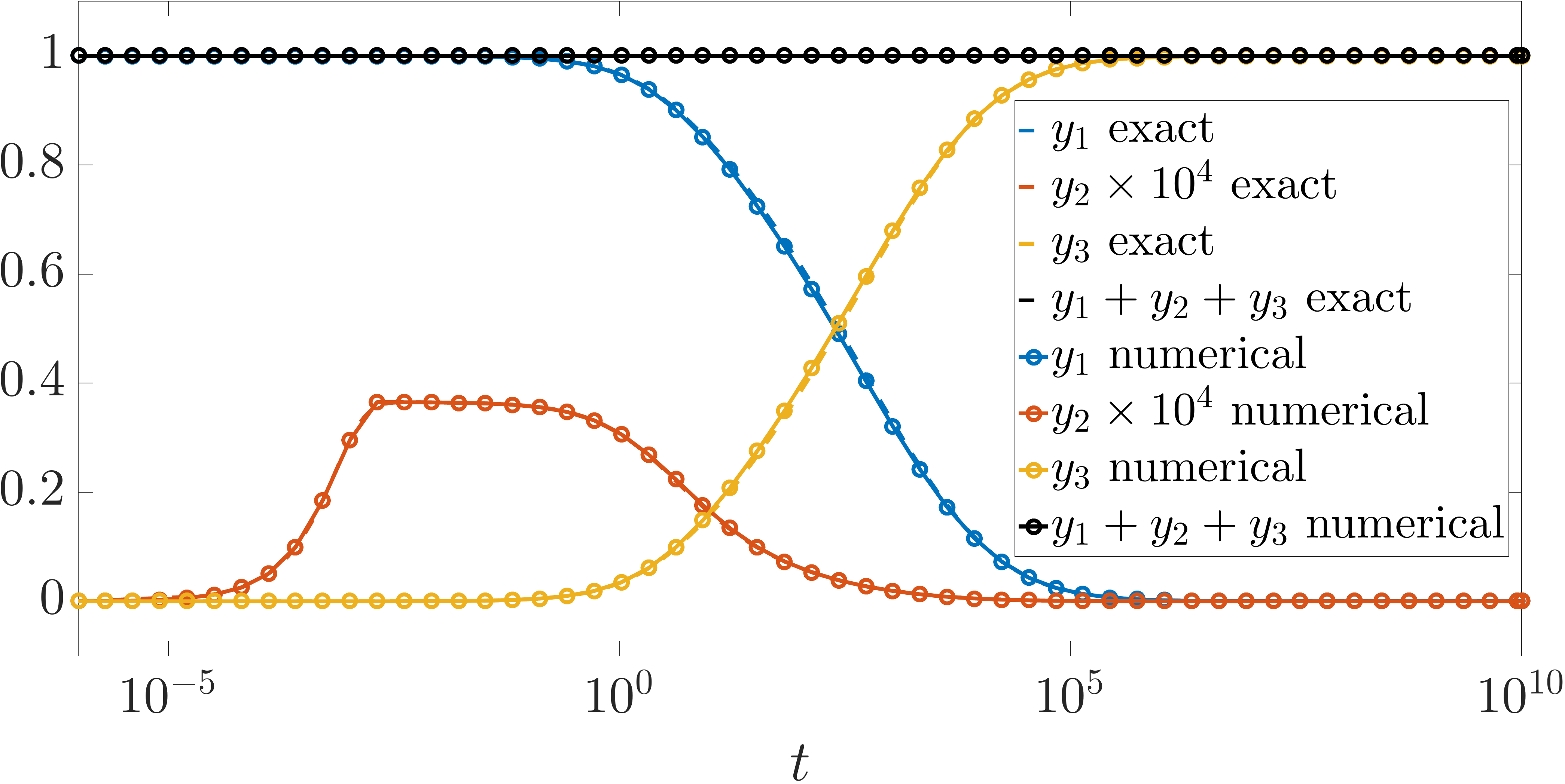}
 \caption{MPRK22ncs(1)}
 \label{fig:robertsonMPRK22ncs1}
 \end{subfigure}
 \caption{Numerical solutions of the Robertson problem \eqref{eq:robertson} for different MPRK22 and MPRK22ncs schemes.}
 \label{fig:robertson}
\end{figure}

\begin{figure}[htb]
\centering
\begin{subfigure}{.49\textwidth}
\centering
 \includegraphics[width=\textwidth]{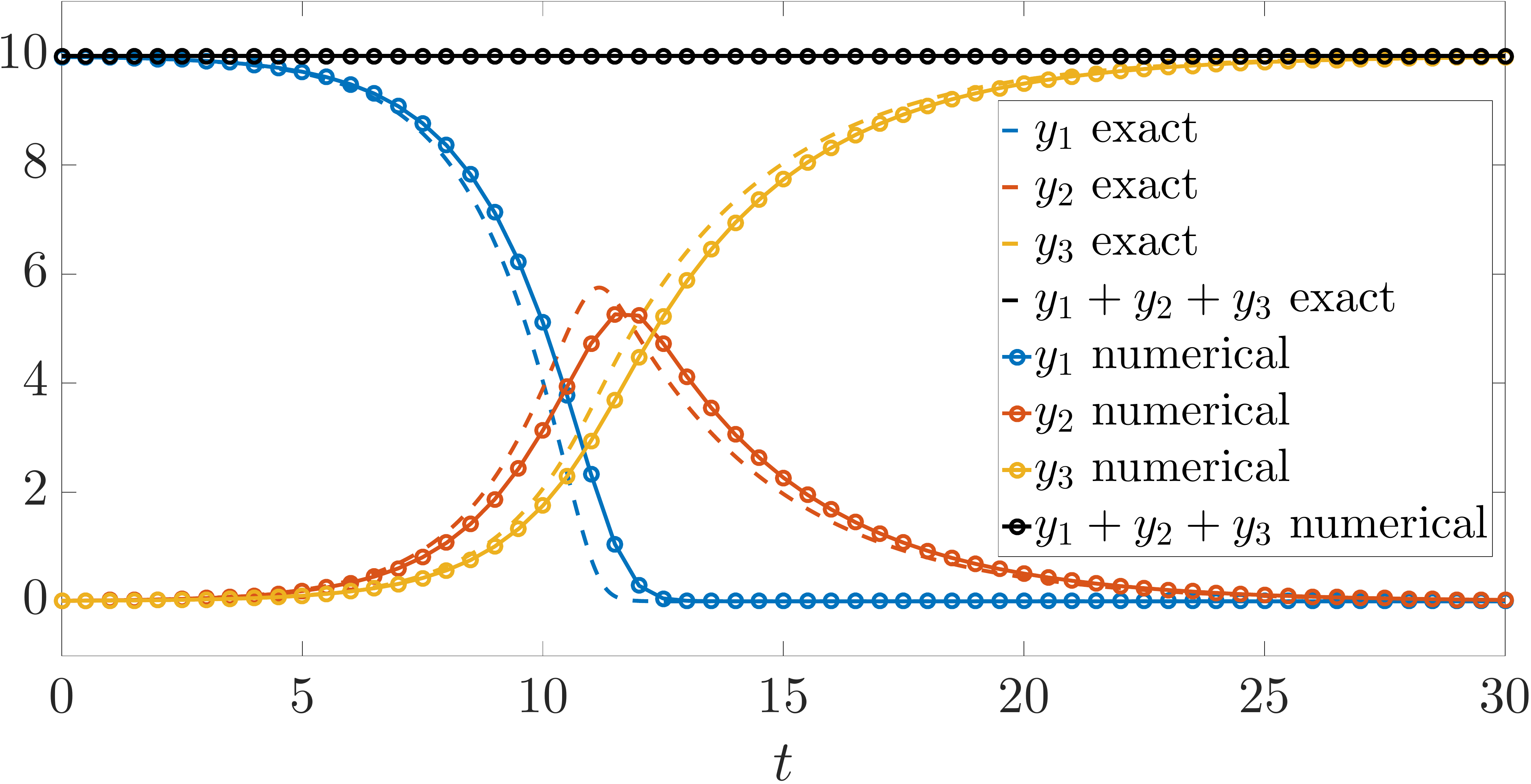}
 \caption{MPRK22($1/2$)}
 \end{subfigure}
 \hfill
\begin{subfigure}{.49\textwidth}
\centering
 \includegraphics[width=\textwidth]{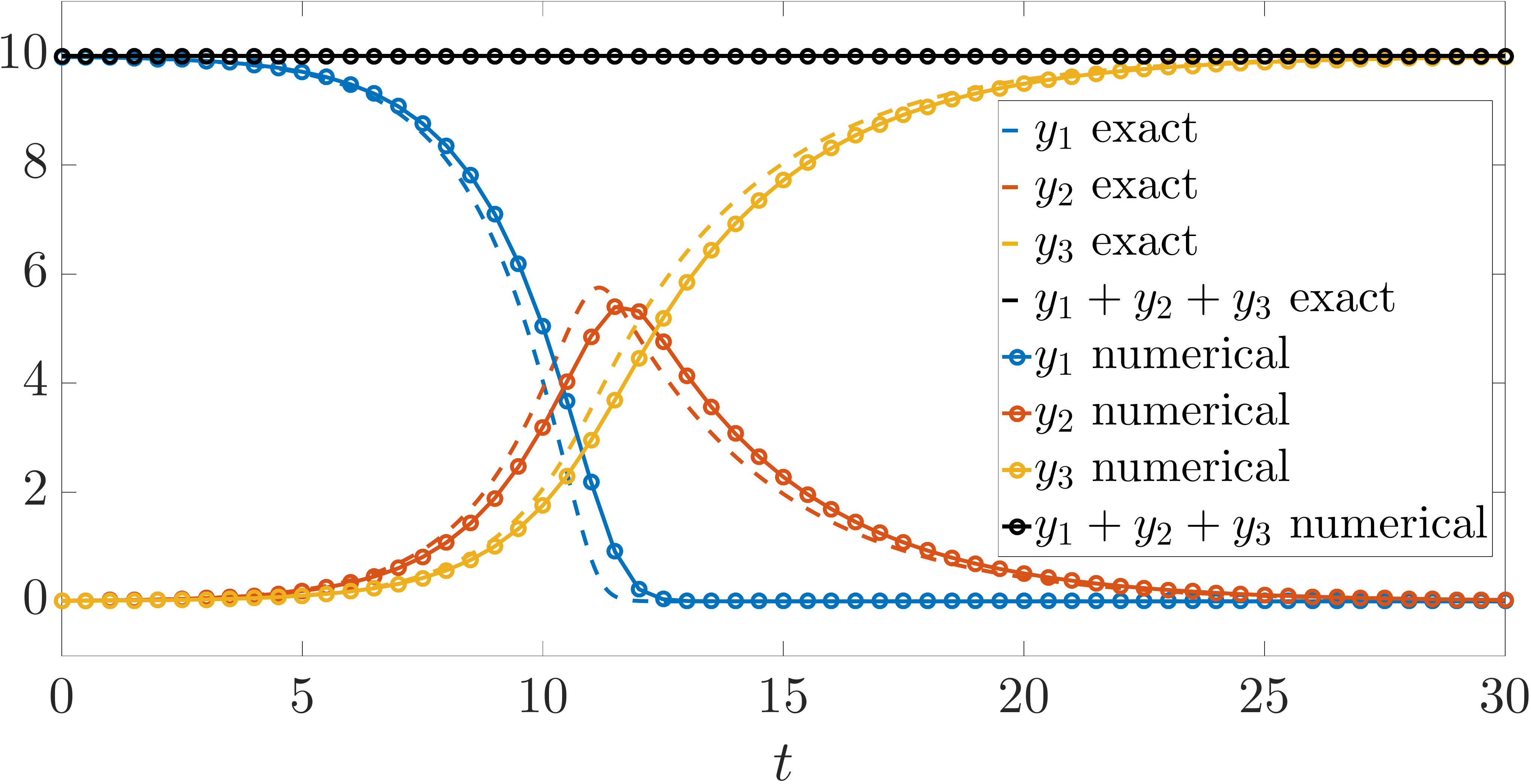}
 \caption{MPRK22ncs($1/2$)}
 \end{subfigure} 
 \par
\vspace{2\baselineskip} 
 \begin{subfigure}{.49\textwidth}
\centering
 \includegraphics[width=\textwidth]{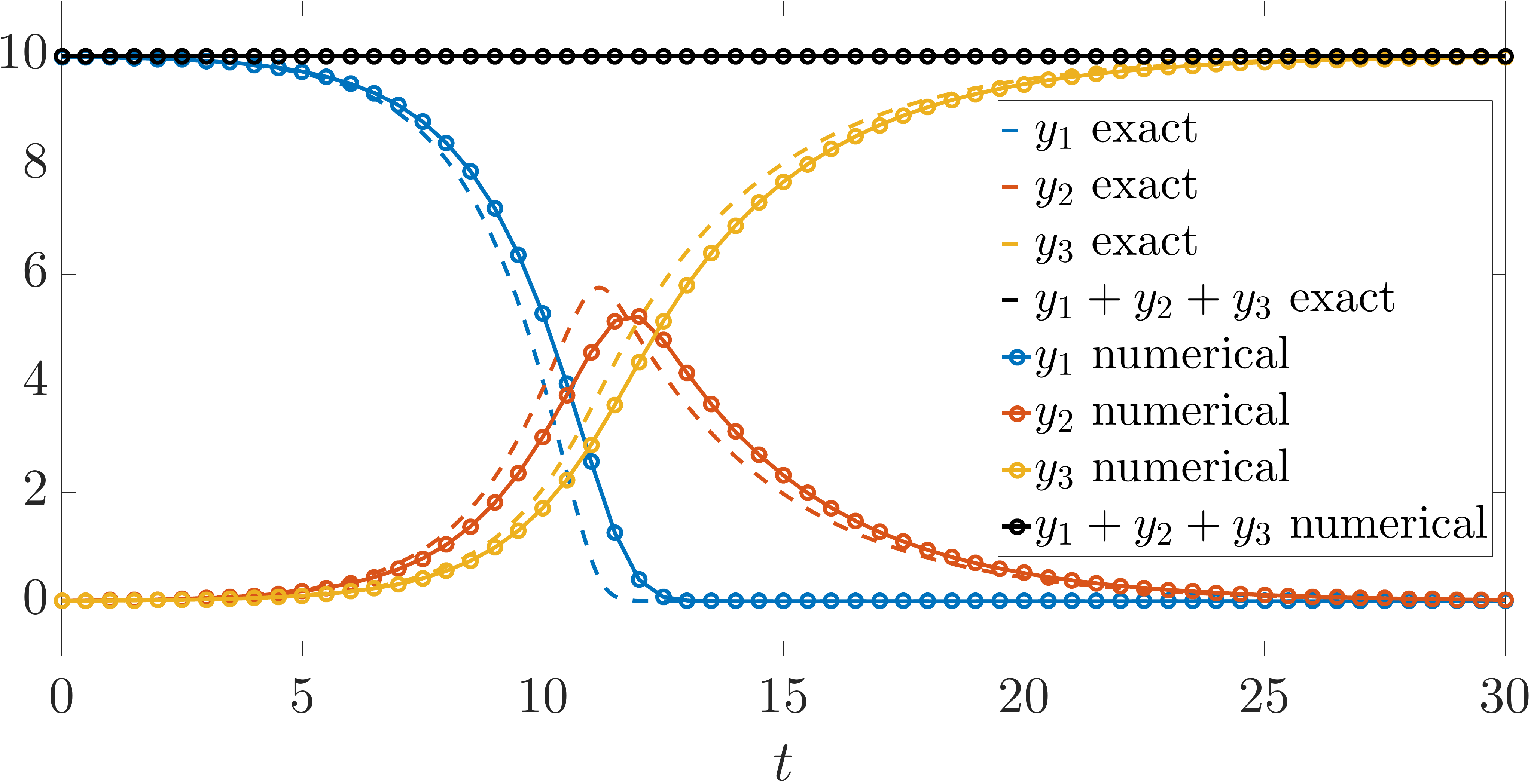}
 \caption{MPRK22($2/3$)}
 \end{subfigure}
\hfill
 \begin{subfigure}{.49\textwidth}
\centering
 \includegraphics[width=\textwidth]{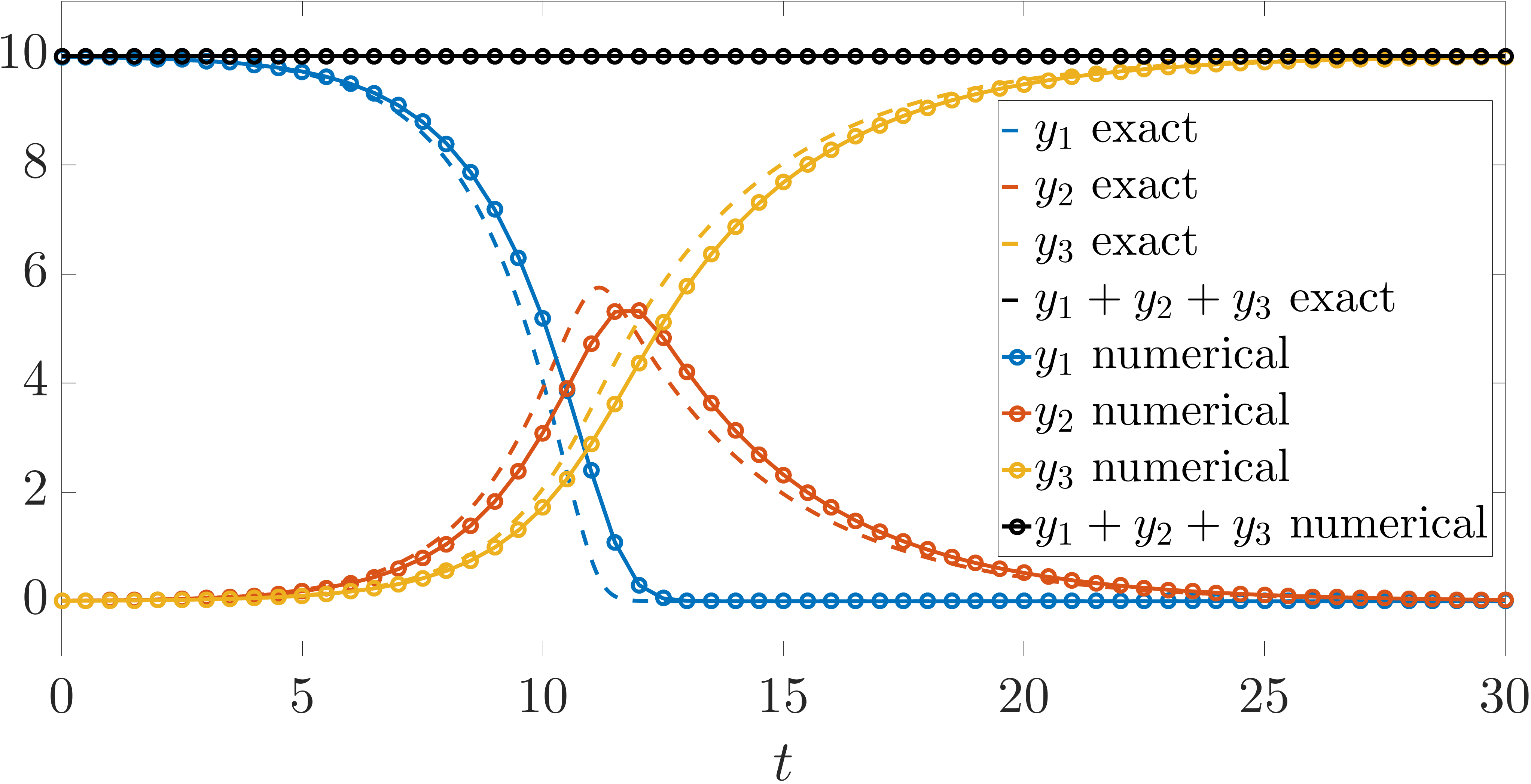}
 \caption{MPRK22ncs($2/3$)}
 \end{subfigure}
  \par
 \vspace{2\baselineskip}
  \begin{subfigure}{.49\textwidth}
\centering
 \includegraphics[width=\textwidth]{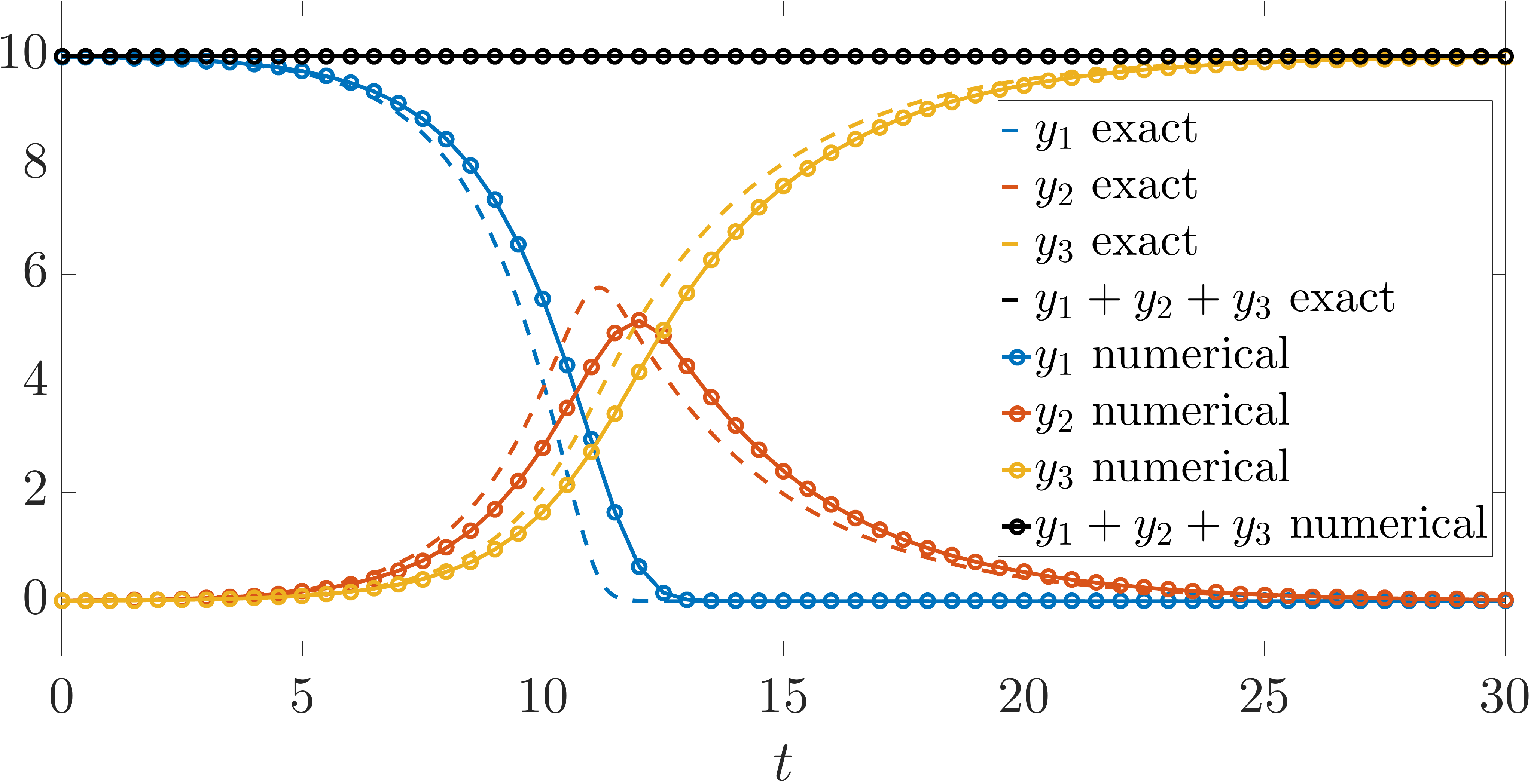}
 \caption{MPRK22(1)}
 \end{subfigure}
 \hfill
  \begin{subfigure}{.49\textwidth}
\centering
 \includegraphics[width=\textwidth]{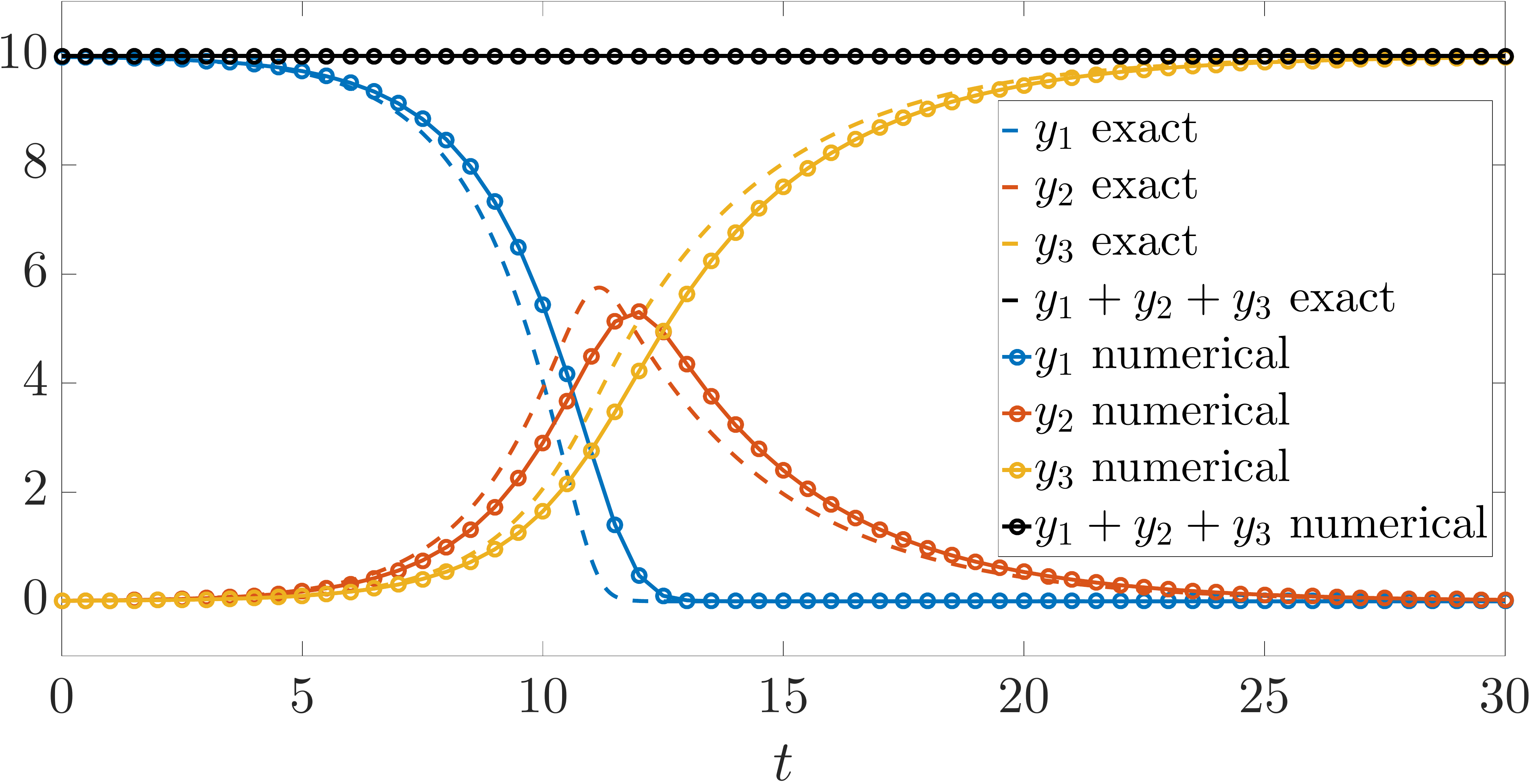}
 \caption{MPRK22ncs(1)}
 \end{subfigure}
 \caption{Numerical solutions of the nonlinear test problem \eqref{eq:nonlintest} for different MPRK22 and MPRK22ncs schemes.}
 \label{fig:nonlin}
\end{figure}

\begin{figure}[htb]
\centering
\begin{subfigure}{.49\textwidth}
\centering
 \includegraphics[width=\textwidth]{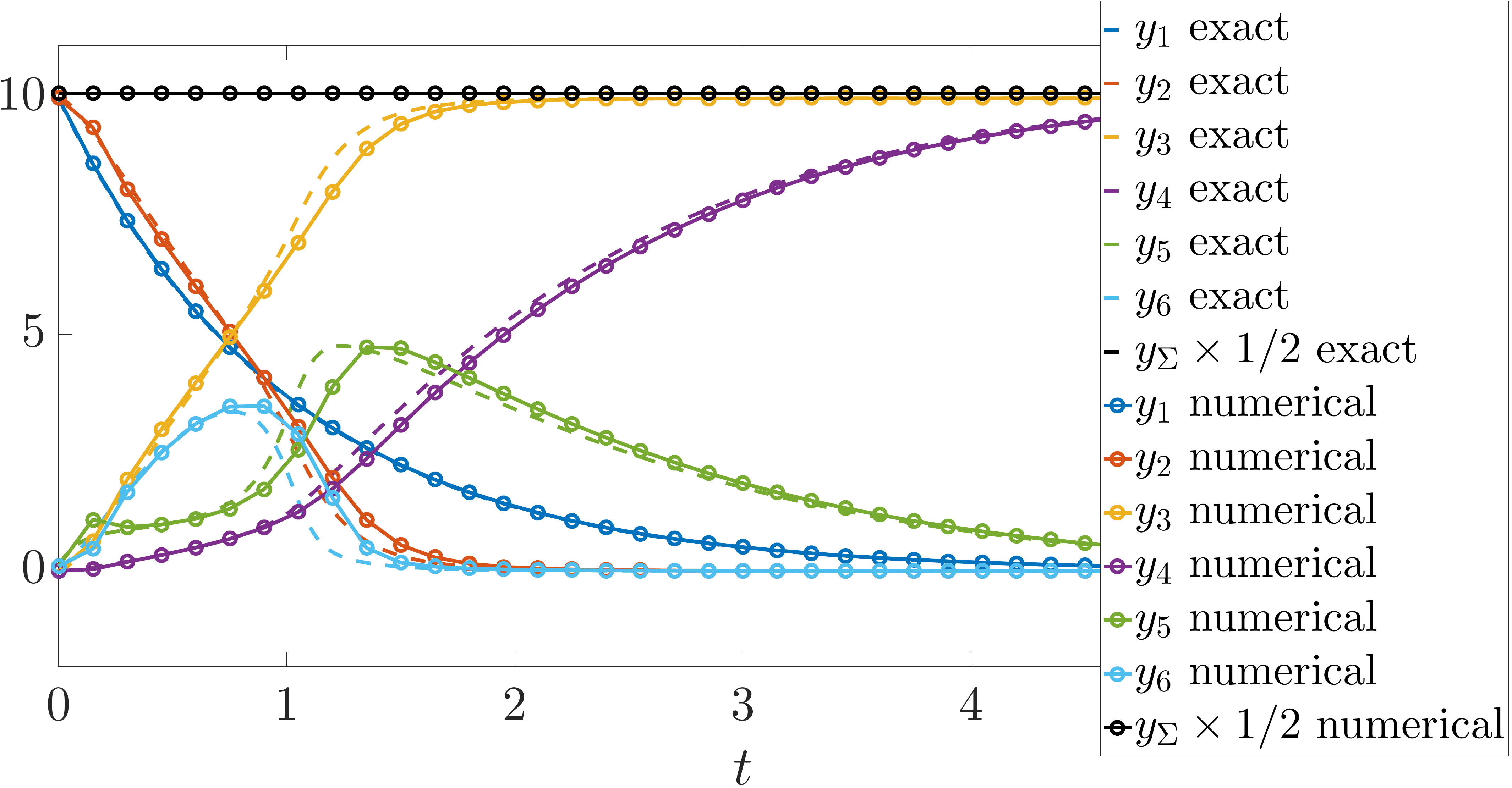}
 \caption{MPRK22($1/2$)}
 \end{subfigure}
 \hfill
\begin{subfigure}{.49\textwidth}
\centering
 \includegraphics[width=\textwidth]{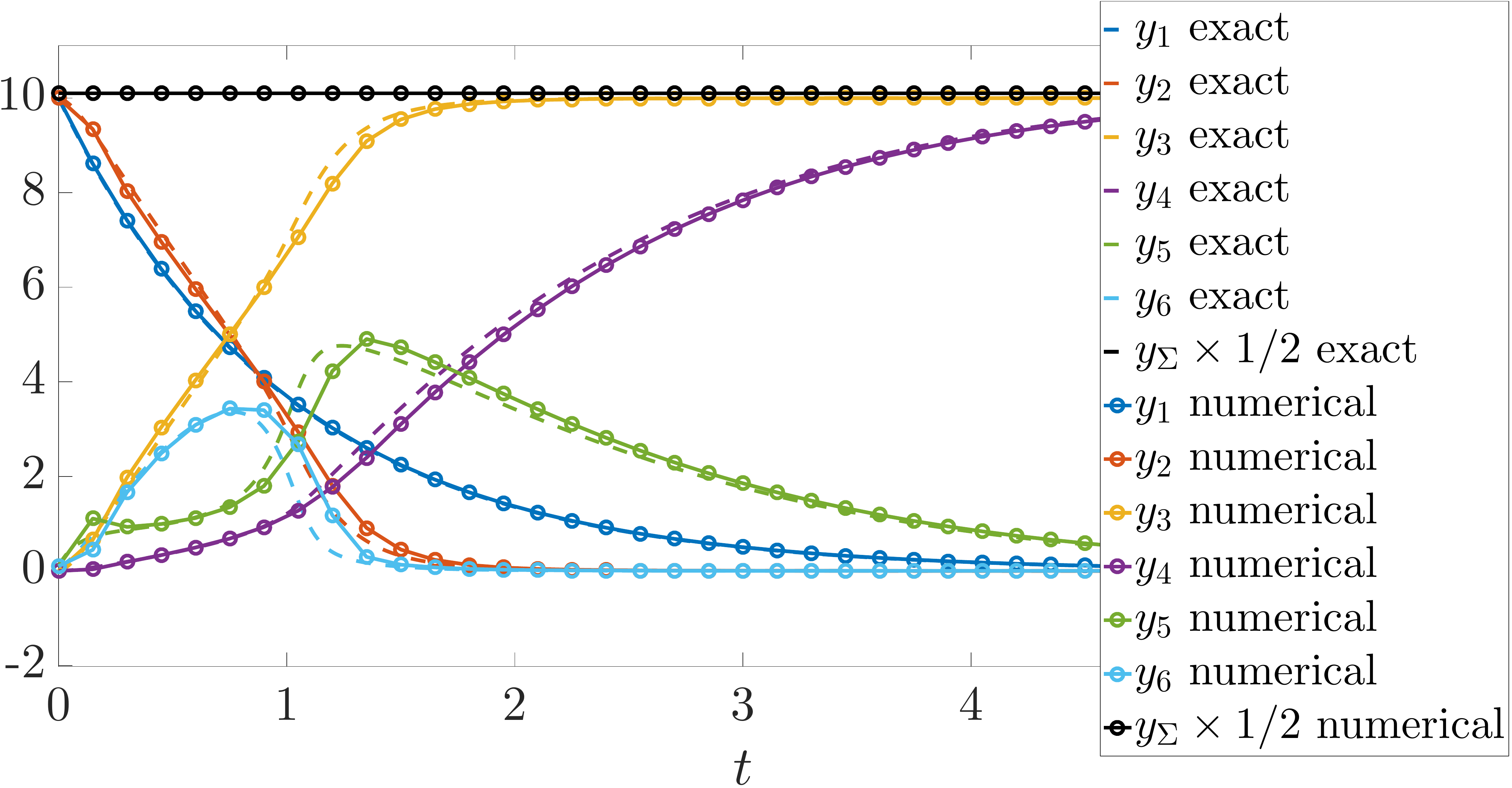}
 \caption{MPRK22ncs($1/2$)}
 \end{subfigure} 
 \par
\vspace{2\baselineskip} 
 \begin{subfigure}{.49\textwidth}
\centering
 \includegraphics[width=\textwidth]{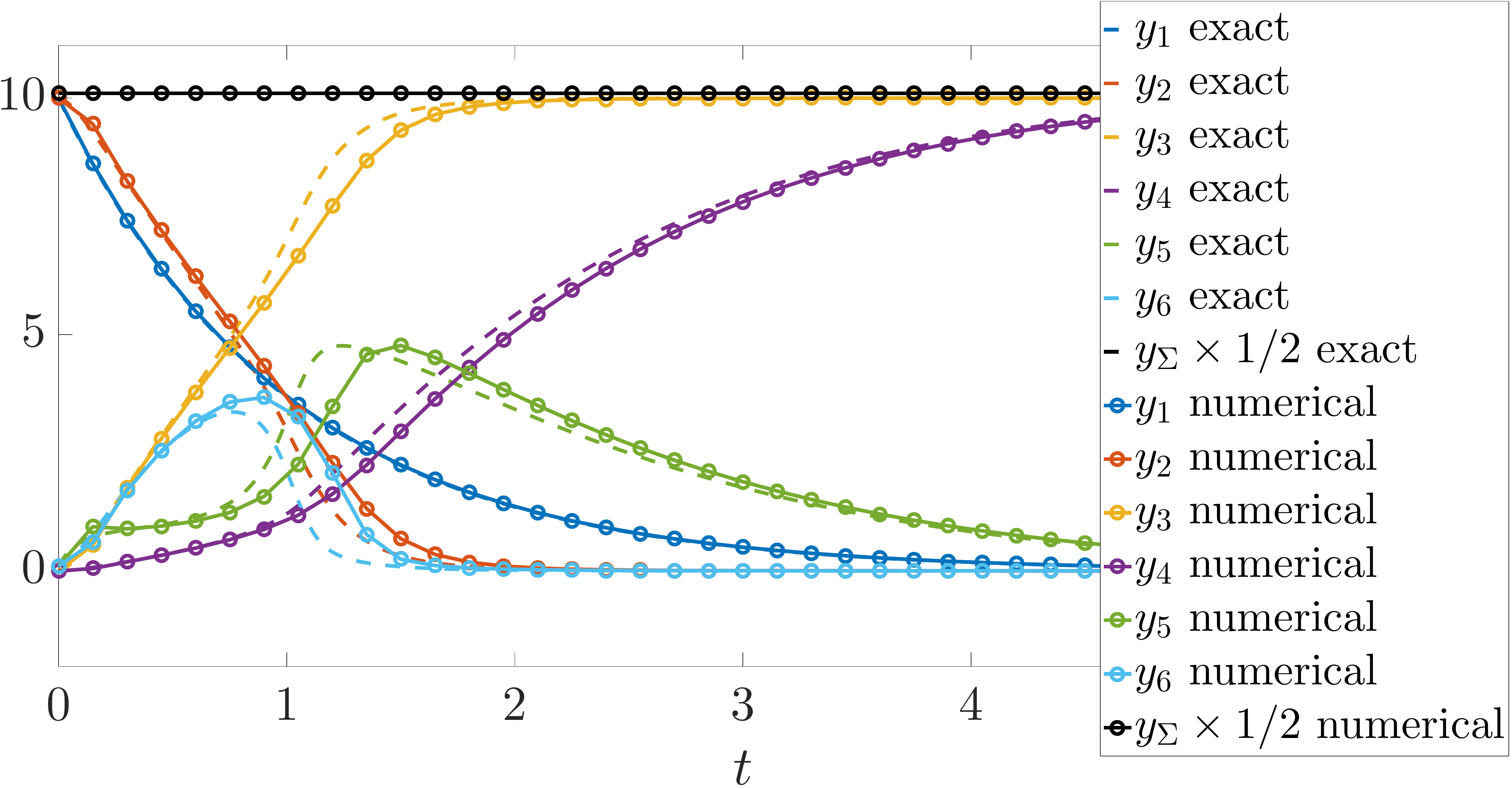}
 \caption{MPRK22($2/3$)}
 \end{subfigure}
\hfill
 \begin{subfigure}{.49\textwidth}
\centering
 \includegraphics[width=\textwidth]{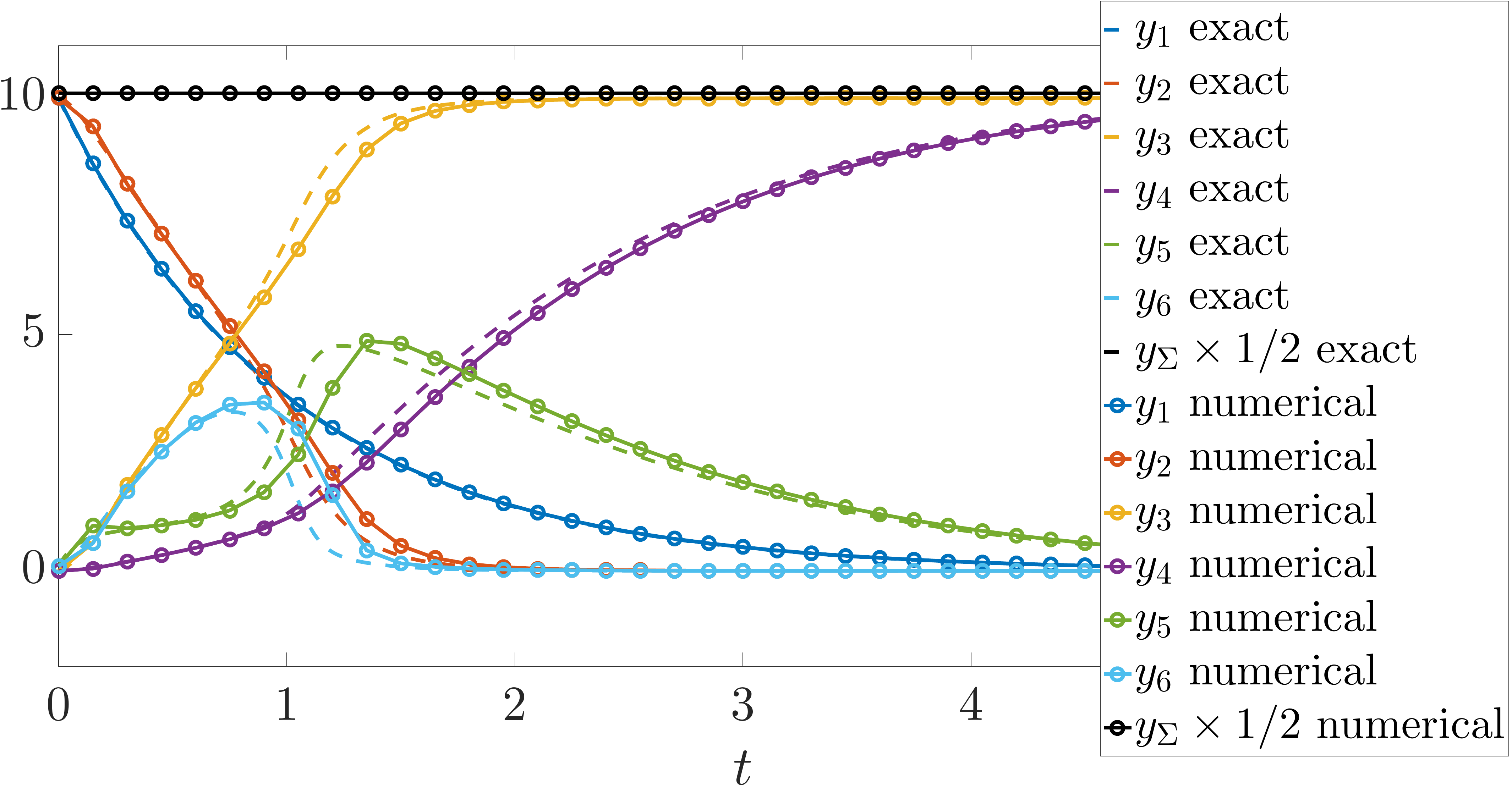}
 \caption{MPRK22ncs($2/3$)}
 \end{subfigure}
  \par
 \vspace{2\baselineskip}
  \begin{subfigure}{.49\textwidth}
\centering
 \includegraphics[width=\textwidth]{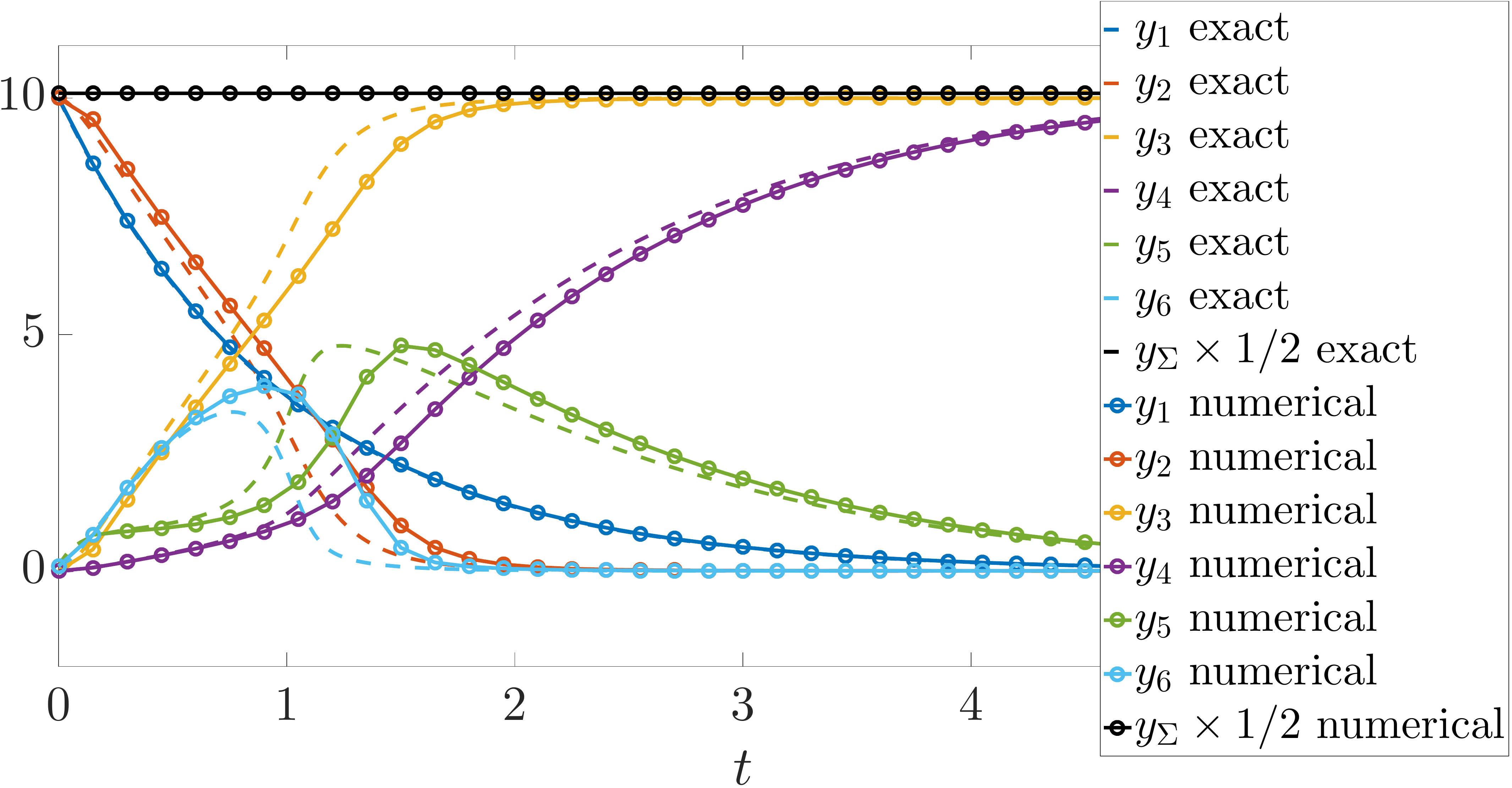}
 \caption{MPRK22(1)}
 \end{subfigure}
 \hfill
  \begin{subfigure}{.49\textwidth}
\centering
 \includegraphics[width=\textwidth]{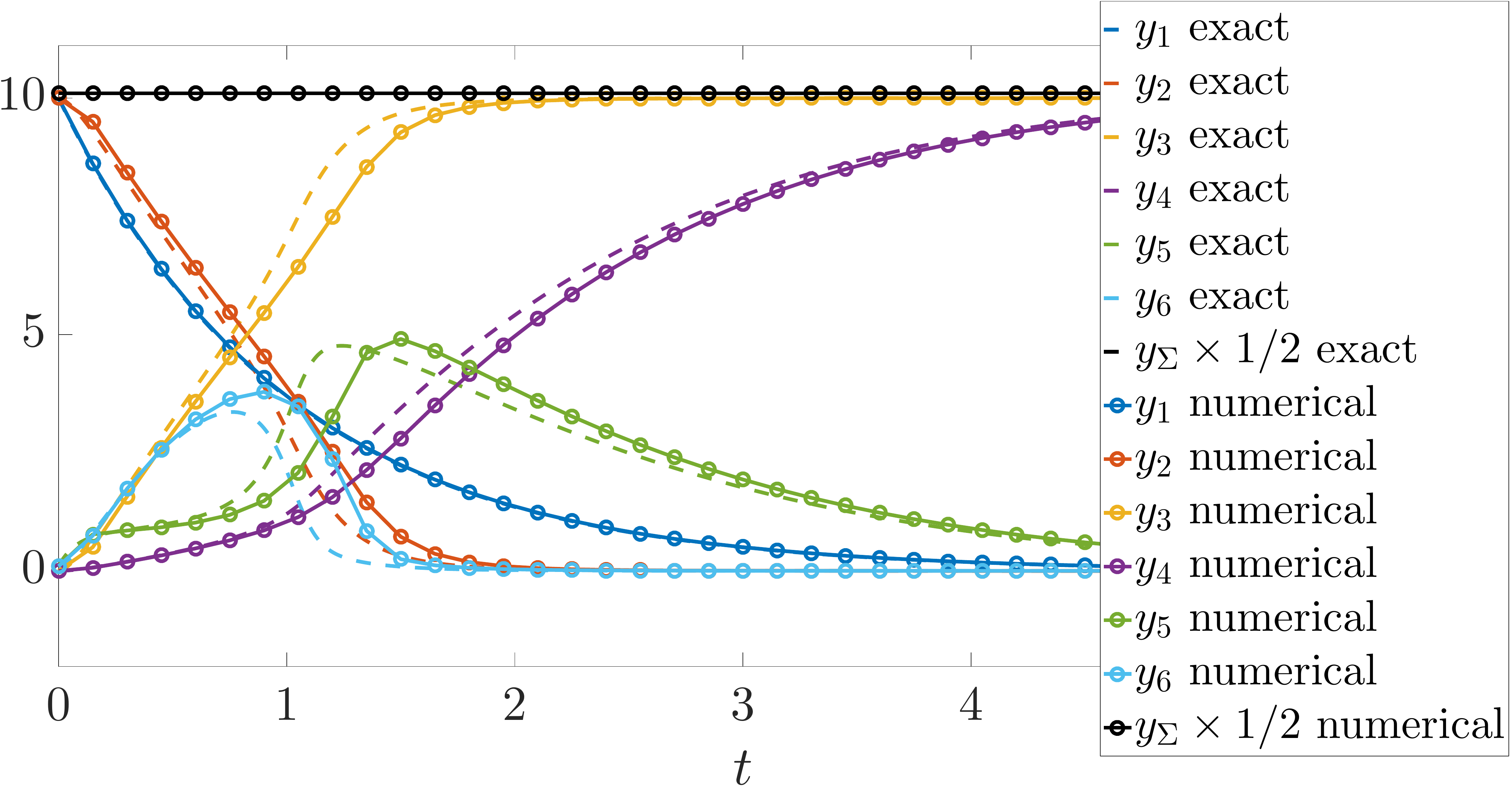}
 \caption{MPRK22ncs(1)}
 \end{subfigure}
 \caption{Numerical solutions of the Brusselator problem \eqref{eq:brusselator} for different MPRK22 and MPRK22ncs schemes. The term $y_\Sigma$, which appears in the legend, is defined as $y_\Sigma=y_1+\dots+y_6$.}
 \label{fig:brusselator}
\end{figure}
%%%%%%%%%%%%%%%%%%%%%%%%%%%%%%%%%%%%%%%%%%%%%%%%%%%%%%%%%%%%%%%%%%%%%%%%%%%%%%%%%%%%%%%%%%%%%%%%%%%%%%%%%%%%%%%%%
\section{Summary and Outlook}
In this paper we have introduced a general definition of modified Patankar-Runge-Kutta (MPRK) schemes, which includes the schemes originally introduced in \cite{BDM2003}.
We have shown that MPRK schemes are unconditionally positive and conservative by construction and introduced two novel families of second order MPRK schemes. 
The analysis concerning the order of MPRK schemes closes the gap to define both sufficient and necessary conditions with respect to the convergence order and yields a comprehensive investigation of first and second order schemes for the first time.

Numerical experiments confirmed the theoretical convergence order of these schemes and indicate that MPRK22($1/2$) is the preferable scheme in terms of truncation errors. 
They also demonstrated the capability of the MPRK22 schemes to integrate stiff PDS like the Robertson problem and revealed issues with oscillations of the MPRK22ncs schemes, when applied to the Robertson problem.

The numerical results motivate an analytical investigation of the truncation errors of MPRK22 schemes and a stability analysis of MPRK schemes in general.

Furthermore, the analysis carried out in this paper can be extended to schemes of order three and higher.
%%%%%%%%%%%%%%%%%%%%%%%%%%%%%%%%%%%%%%%%%%%%%%%%%%%%%%%%%%%%%%%%%%%%%%%%%%%%%%%%%%%%%%%%%%%%%%%%%%%%%%%%%%%%%%%%%
\newcommand{\etalchar}[1]{$^{#1}$}

%%%%%%%%%%%%%%%%%%%%%%%%%%%%%%%%%%%%%%%%%%%%%%%%%%%%%%%%%%%%%%%%%%%%%%%%%%%%%%%%%%%%%%%%%%%%%%%%%%%%%%%%%%%%%%%%%
\end{document}